\documentclass{amsart}

\usepackage{graphicx,epsfig}

\usepackage{amsmath,amssymb,latexsym, amsfonts, amscd, amsthm, xy}

\usepackage{url}

\usepackage{bbm}

\usepackage{hyperref}

\usepackage{mathrsfs}

\usepackage[margin=1in]{geometry}

\input{xy}
\xyoption{all}

\usepackage[usenames]{color}



\hypersetup{
    colorlinks   = true,
      citecolor    = red
}

\hypersetup{linkcolor=red}

\hypersetup{linkcolor=blue}

\theoremstyle{plain}
\newtheorem{theorem}{Theorem}[section]
\newtheorem{conjecture}[theorem]{Conjecture}
\newtheorem{proposition}[theorem]{Proposition}
\newtheorem{corollary}[theorem]{Corollary}

\newtheorem{lemma}[theorem]{Lemma}

\theoremstyle{definition}
\newtheorem{definition}[theorem]{Definition}
\newtheorem{remark}[theorem]{Remark}
\newtheorem{example}[theorem]{Example}

\def\bC{\mathbb{C}}

\def\bH{\mathbb{H}}

\def\bZ{\mathbb{Z}}
\def\bR{\mathbb{R}}
\def\bS{\mathbb{S}}

\def\cD{\mathcal{D}}

\def\cG{\mathcal{G}}

\def\cH{\mathcal{H}}

\def\cH{\mathcal{H}}

\def\cI{\mathcal{I}}

\def\cJ{\mathcal{J}}

\def\cL{\mathcal{L}}
\def\cM{\mathcal{M}}

\def\cO{\mathcal{O}}

\def\cP{\mathcal{P}}

\def\cQ{\mathcal{Q}}

\def\cR{\mathcal{R}}

\def\cS{\mathcal{S}}

\def\cX{\mathcal{X}}

\def\cZ{\mathcal{Z}}

\def\sord{\textbf{sord}}

\def\ord{\textbf{ord}}




\def\fd{\mathfrak{d}}

\def\Div{\textbf{Div}}

\def\divi{\textbf{div}}

\def\ord{\textbf{ord}}

\def\b1{\mathbbm{1}}



\begin{document}

\title{Quaternionic analogues of Bers's theorem and Iss'sa's theorem}

\author{Dong Quan Ngoc Nguyen}

\date{July 25, 2021}

\address{Department of Applied and Computational Mathematics and Statistics \\
         University of Notre Dame \\
         Notre Dame, Indiana 46556, USA }

\email{\href{mailto:dongquan.ngoc.nguyen@nd.edu}{\tt dongquan.ngoc.nguyen@nd.edu}}

\urladdr{http://nd.edu/~dnguye15}



\maketitle

\tableofcontents

\begin{abstract}

In their recent work, Gentili and Struppa proposed a different quaternionic analogue of the notion of holomorphic functions in the complex plane, called \textit{slice regular functions}, which has led to several analogues of classical theorems in complex function theory in the quaternionic setting. The quaternionic analogue of meromorphic functions is called \textit{semiregular functions}. Such a function is said to be \textit{slice preserving} if it maps each domain in a complex line to a domain in the same complex line.  In this work, we deal with the relation between analytic and algebraic properties of certain fields of semiregular functions in symmetric slice domains of the real quaternions. More precisely, we prove quaternionic analogues of Bers's theorem and Iss'sa's theorem that deal with the algebraic structures of the ring of slice preserving regular functions and of certain fields of slice preserving semiregular functions on symmetric slice domains in the real quaternions.

\end{abstract}

\section{Introduction}

Complex function theory (see \cite{rem1, rem2}) is a highly developed area of mathematics, which has applications in a diverse variety of other areas in mathematics. Since the discovery of quaternions by William Rowan Hamilton in the 19th century, several mathematicians have been searching for a quaternionic analogue of complex function theory. The first attempt, due to Fueter \cite{fueter, fueter1, sudbery}, proposed a quaternionic analogue, called the \textbf{Cauchy-Fueter operator}, of the Cauchy-Riemann operator which shares several analogies with the theory of holomorphic functions such as a quaternionic analogue of Cauchy integral theory, and the Taylor and Laurent series for functions satisfying the Cauchy-Fueter operator. Despite its richness, and important applications to physics, the set consisting of all functions satisfying the Cauchy-Fueter operator, called \textbf{Fueter-regular functions}, contains several strange properties that are not in analogy with classical holomorphic functions in the complex plane $\bC$. For example, the identity function is not regular in the sense of Fueter, which implies all quaternionic polynomials are not Fueter-regular. Such exclusion of an important class of functions, say quaternionic polynomials, limits applications of the theory proposed by Fueter, and motivates the search for possibly different quaternionic analogues of complex function theory.

Most notably, in their recent work, Gentili and Struppa \cite{genstru, genstru1} proposed a different quaternionic analogue of the notion of holomorphic functions, called \textbf{slice regular functions}, which has led to several analogues of classical theorems in complex function theory in the quaternionic setting. One of the important features of Gentili and Struppa's theory is that the set of all slice regular functions is equipped with a (noncommutative) \textbf{ring structure} (see Lang \cite{lang} for such notion) with respect to the usual addition ``$+$" and the $\star$-product proposed by Colombo, Gentili, Sabadini, and Struppa \cite{cgss} that will be reviewed in Section \ref{sec-notions}. Such ring structure shows that slice regular functions operates under the usual addition and the $\star$-product in a similar way as holomorphic functions do under the usual addition and multiplication, which allows to search for possible relations between analysis, algebra, and geometry of slice regular functions. In addtion, the notion of slice regular functions proposed by Gentili and Struppa leads to many fundamental analogues of the classical theorems in complex function theory in the quaternionic setting such as the quaternionic analogues of Cauchy and Pompeiu representation formulae, Cauchy inqualities, the maximum modulus principle, the open mapping theorem, Laurent series, and the Weierstrass factorization theorem, to cite just a few (see, for example, \cite{cgss, cgss1, cgs, genstop, gv, stoppato}). Especially, the reader is referred to a beautiful exposition of theory of regular functions of a quaternionic variable by Gentili, Stoppato, and Struppa \cite{GSS}. 

Although theory of slice regular functions is highly developed, there seems a lack of results that study regular functions from an \textit{algebraic} point of view. The aim of this paper is to explore the algebraic structure of semiregular functions (which are analogous to mermorphic functions in the complex case), and study the relation between such algebraic structure and the analytic information of semiregular functions such as their poles and zeros. More precisely, in this paper, we prove a quaternionic analogue of Bers's and Iss'sa's theorems that describe \textit{valuations} on the ring of slice preserving regular functions and its quotient field. Valuations play a role of ordering elements in an algebraic structure such as groups, rings, or fields (see Lang \cite{lang}), and it is one of the main notions in algebra that has important applications in algebra, algebraic geometry, and number theory.

The structure of our paper is as follows. In Section \ref{sec-notions}, we recall some basic notions and notation in theory of regular functions of a quaternionic variable that will be used in the subsequent sections. In Section \ref{sec-divisors}, we introduce a notion of spherical divisors in a symmetric slice domain in the quaternions $\bH$, in analogy with the notion of divisors in $\bC$. We introduce a generalization of the notion of slice preserving regular functions that we call \textit{slice preserving semiregular functions}. In the same section, we also introduce a notion of spherical orders of slice preserving semiregular functions that plays a key role in our main theorems. At the end of Section \ref{sec-divisors}, we prove a quaternipnic analogue of the Holomorphy Criterion for slice preserving semiregular functions. In Section \ref{sec-weierstrass}, we recall a quaternionic analogue of the Weierstrass factorization theorem, due to Gentili and Vignozzi \cite{gv}. Using this theorem, we prove that the field of slice preserving semiregular functions in $\bH$  is the quotient field of slice preserving entire functions in $\bH$. The set of all slice preserving regular functions is an integral domain, and can be equipped with a commutative algebra structure over $\bR$. In Section \ref{sec-bers}, we prove a quaternionic analogue of Bers's theorem that studies valuations on the algebra of slice preserving regular functions. In Section \ref{sec-isssa}, we prove our main theorem in this paper which is a quaternionic analogue of Iss'sa's theorem, regarding valuations on the quotient field of the algebra of slice preserving regular functions. One of the main results used in the proof of Iss'sa's theorem in the complex case (see Iss'sa \cite{isssa}, \cite{kelleher}, or Remmert \cite{rem2}) is the Root Criterion for the existence of a holomorphic $n$th root of a holomorphic function for every $n \ge 1$ (see \cite{rem2}). There is, however, no known analogue of the Root Criterion for regular functions in the quaternionic setting. Thus in the proof of our quaternionic analogue of Iss'sa's theorem, we need to get round to this issue by explicitly proving that certain functions used in the proof of the quaternionic analogue of Iss'sa's theorem have $n$th roots for arbitrary positive integers $n$.

\section{Some basic notions and notation}
\label{sec-notions}

Let $\bH$ denote the division algebra of quaternions that is generated as a 4-dimensional vector space over $\bR$ by the elements $1, i, j, k$ such that $i^2 = j^2 = k^2 = -1$, $ij = -ji$, $jk = -kj$, and $ki = -ik$. Each element $q$ in $\bH$ can be written in the form $q = a_0 + a_1 i + a_2 j + a_3 k$ for some real numbers $a_0, a_1, a_2, a_3$. The \textit{real part of $q$ }is $a_0$, and we denote it by $\Re(q)$. The \textit{imaginary 
part} is $a_1i + a_2 j + a_3k$. We denote the imaginary part of $q$ by $\Im(q)$. The \textit{modulus of $q$}, denoted by $|q|$, is defined as the Euclidean norm of $q$ in $\bR^4$, i.e., $|q| = \sqrt{a_0^2 + a_1^2 + a_2^2 + a_3^2}$. So each quaternion $q$ can be written in the form $q = \Re(q) + \Im(q)$. The \textit{conjugate of $q$} is defined as $\bar{q} = \Re(q) - \Im(q)$. 

Let $\bS$ denote the unit sphere of purely imaginary quaternions, consisting of all elements $q \in \bH$ such that $q^2 = -1$. Equivalently, it is easy to verify that
\begin{align*}
\bS = \{q \in \bH \; | \; \text{$q = a_1 i + a_2 j + a_3 k$ and $a_1^2 + a_2^2 + a_3^2 = 1$}\}.
\end{align*}

Let $q = a_0 + a_1i + a_2j + a_3k \in \bH$ for some real numbers $a_0, a_1, a_2, a_3$ such that $\Im(q) = a_1 i + a_2 j + a_3k \ne 0$. Set 
\begin{align*}
I = \dfrac{\Im(q)}{|\Im(q)|} = \dfrac{a_1}{(a_1^2 + a_2^2 + a_3^2)^{1/2}}i +  \dfrac{a_2}{(a_1^2 + a_2^2 + a_3^2)^{1/2}}j +  \dfrac{a_3}{(a_1^2 + a_2^2 + a_3^2)^{1/2}}k.
\end{align*}
It is immediate to see that $I \in \bS$, and $q$ can be represented in the form
\begin{align}
\label{e-rep-of-q-in-S}
q = x + Iy,
\end{align}
where $x = \Re(q)$ and $y = |\Im(q)|$. In addition, $\bar{q} = x - Iy$, which is a quaternionic analogue of the complex conjugate in the complex case. From the representation (\ref{e-rep-of-q-in-S}) of $q$, we deduce that
\begin{align*}
|q| = \sqrt{x^2 + y^2}.
\end{align*}

For each $I \in \bS$, the complex line $\bC_I$ is defined as $\bC_I = \bR + I\bR$ which is isomorphic to the complex plane $\bC$. One can write
\begin{align*}
\bH = \bigcup_{I \in \bS} \bC_I.
\end{align*}

Let $f : D \to \bH$ be a function on a domain $D$ in $\bH$. For each $I \in \bH$, we denote by $f_I$ the restriction of $f$ to $D \cap \bC_I$, i.e., for all $q \in D \cap \bC_I$, $f_I(q) = f(q)$. One can represent each element $q$ in $D \cap \bC_I$ as $x + Iy$ for $x, y \in \bR$, and thus the restriction $f_I$ can be viewed as a function in two real variables $x, y$.

We recall from \cite{cgss} the notion of regular functions.
\begin{definition}
(regular functions)

Let $D$ be a domain in $\bH$, and let $f : D \to \bH$ be a function on $D$ that takes values in $\bH$. The function $f$ is said to be \textbf{regular on $D$} if for every element $I \in \bS$, the restriction $f_I$ of $f$ to $D \cap \bC_I$ has continuous partial derivatives $\dfrac{\partial f}{\partial x}, \dfrac{\partial f}{\partial y}$ such that
\begin{align*}
\bar{\partial}_I f(x + Iy) := \dfrac{1}{2}\left(\dfrac{\partial f}{\partial x} + I \dfrac{\partial f}{\partial y} \right)f_I(x + Iy) = 0
\end{align*}
for all $x + Iy \in D \cap \bC_I$.

\end{definition}

It is known (see \cite{cgss}) that there exist regular functions on certain domains in $\bH$ such that these functions are not continuous. So a notion of domains in $\bH$, using topology induced from $\bR^4$ is not natural to study the class of regular functions. In \cite{cgss}, a special class of domains in $\bH$ was introduced to get round to this issue as follows.

\begin{definition}
(slice domains)

Let $D$ be a domain in $\bH$ such that $D \cap \bR$ is nonempty. The domain $D$ is said to be a \textbf{slice domain} if for every $I \in \bS$, $D \cap \bC_I$ is a domain in the complex line $\bC_I$.

\end{definition}

For each $x + yI$ in $\bH$ for some $I \in \bS$, we denote by $[x + y\bS]$ the sphere $\{x + yJ \; | \; J \in \bS\}$ in $\bH$. We also allow $y = 0$, and thus in this case, $[x + \bS y]$ is a singleton set $\{x\}$. Since $\bH$ is a union of complex lines $\bC_I$, the following notion is natural for subsets of $\bH$.
\begin{definition}
(symmetric domains)

Let $X$ be a subset of $\bH$. The set $X$ is said to be \textbf{symmetric} if the following is true:
\begin{itemize}

\item [($\star$)] for every element $x + Iy \in X$ for some real numbers $x, y$ and $I \in \bS$, the whole sphere $[x + y\bS]$ is contained in $X$.

\end{itemize}

\end{definition}

In \cite{cgss}, Colombo, Gentili Sabadini, Struppa proved that a regular function on a slice domain $D$ can be uniquely extended to the \textit{smallest symmetric slice domain} that contains $D$. This result shows that symmetric slice domains are quaternionic analogues of domains of holomorphy in complex function theory.

Regular functions possess a \textit{splitting property} that can be used to study their restrictions to the complex lines $\bC_I$. Before stating this property, we fix some notation that will be used throughout the paper. Let $I, J$ be elements in $\bS$ of the form $I = a_1 i + a_2j + a_3k$ and $J = b_1i + b_2j + b_3k$ for some real numbers $a_1, a_2, a_3, b_1, b_2, b_3$. We write $\langle I, J \rangle$ for the inner product of $I, J$ as vectors in $\bR^4$, that is, 
\begin{align*}
\langle I, J \rangle = a_1b_1 + a_2b_2 + a_3b_3.
\end{align*}

We write $I \perp J$ whenever $I, J$ are orthogonal as vectors in $\bR^4$, that is, whenever $\langle I, J\rangle = 0$. One can view $I, J$ as vectors $(a_1, a_2, a_3)$ and $(b_1, b_2, b_3)$ in $\bR^3$, respectively with respect to the basis $(i, j, k)$. Based on this view, we can define a cross product $I \times J \in \bR^3$ between $I$ and $J$ of the form
\begin{align*}
I \times J = 
\begin{vmatrix}
i & j & k \\
a_1 & a_2 & a_3 \\
b_1 & b_2 & b_3
\end{vmatrix}
.
\end{align*}

The following result is immediate.
\begin{proposition}
\label{prop-inner-cross-prod-relations}
Let $I, J$ be elements in $\bS$. Then

\begin{itemize}

\item []

\item [(i)] $IJ = -\langle I, J \rangle + I \times J$.

\item [(ii)] If $I \perp J$, then $IJ = I \times J$ belongs in $\bS$, and $1, I, J, K := IJ$ forms a basis of $\bH$ that satisfies the same algebraic properties as the standard basis $1, i, j, k$.

\end{itemize}

\end{proposition}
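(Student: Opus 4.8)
The plan is to establish (i) by a direct expansion and then to deduce (ii) as a formal consequence. Writing $I = a_1 i + a_2 j + a_3 k$ and $J = b_1 i + b_2 j + b_3 k$, I would form the product $IJ$ by distributing over the nine terms $a_s b_t$ times a product of basis elements, using the multiplication table generated by $i^2 = j^2 = k^2 = -1$ together with $ij = k$, $jk = i$, $ki = j$ and the anticommutation relations. The three squared terms contribute only to the real part and assemble into $-(a_1 b_1 + a_2 b_2 + a_3 b_3) = -\langle I, J\rangle$, while the six mixed terms assemble into $(a_2 b_3 - a_3 b_2) i + (a_3 b_1 - a_1 b_3) j + (a_1 b_2 - a_2 b_1) k$, which is exactly the determinant defining $I \times J$. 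This yields $IJ = -\langle I, J\rangle + I \times J$.

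For (ii) I would specialize to $I \perp J$, so that $\langle I, J\rangle = 0$ and part (i) collapses to $IJ = I \times J$, a purely imaginary quaternion; set $K := IJ$. To see that $K \in \bS$, I would show $K^2 = -1$. Applying (i) to the ordered pair $(J, I)$ gives $JI = -\langle J, I\rangle + J \times I = J \times I = -(I \times J) = -IJ$, hence $JI = -IJ$ under orthogonality; then associativity of $\bH$ yields $K^2 = IJIJ = I(JI)J = -I^2 J^2 = -1$, so $K \in \bS$ as claimed. (Equivalently one may note $|I \times J| = |I|\,|J| = 1$.)

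Next I would verify the full list of Hamilton-type relations for the triple $(I, J, K)$. The identities $I^2 = J^2 = -1$ hold because $I, J \in \bS$, and $K^2 = -1$ was just shown; the relation $IJ = K$ is the definition, and $JI = -IJ$ was obtained above. Using associativity together with $I^2 = J^2 = -1$, one computes $JK = J(IJ) = (JI)J = -IJ^2 = I$ and $KI = (IJ)I = I(JI) = -I^2 J = J$, and the matching anticommutations $KJ = -I$ and $IK = -J$ follow the same way. These are precisely the relations satisfied by $i, j, k$. Finally, to conclude that $1, I, J, K$ is a basis of $\bH$, I would observe that $K = I \times J$ is orthogonal to both $I$ and $J$ as a vector in $\bR^3$, so $I, J, K$ are three mutually orthogonal unit vectors spanning the imaginary subspace, and together with the real unit $1$ they form an orthonormal, hence linearly independent, family of four vectors in the $4$-dimensional space $\bH \cong \bR^4$.

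This computation carries no genuine obstacle; the one point demanding care is the bookkeeping of signs, and in particular checking that the orientation fixed by $ij = k$, $jk = i$, $ki = j$ agrees with the orientation of the cross product, so that the imaginary part of $IJ$ emerges as $I \times J$ rather than $-(I \times J)$. Everything else rests on associativity of $\bH$ and the two elementary facts that, when $I \perp J$, the vector $I \times J$ is a unit vector orthogonal to both $I$ and $J$.
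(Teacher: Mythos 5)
Your proof is correct. The paper offers no argument for this proposition (it is stated as ``immediate''), and your direct expansion of $IJ$ into the nine basis products, followed by the verification of the Hamilton relations and the orthogonality/linear-independence argument for $1, I, J, K$, is exactly the routine computation the paper is implicitly relying on, carried out completely and with the right attention to the sign convention $ij = k$, $jk = i$, $ki = j$ matching the orientation of the cross product.
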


The following is a splitting property for regular functions.

\begin{lemma}
\label{lem1.3-gss}
(See \cite{cgss} or \cite[Lemma 1.3, p.2]{GSS})

Let $f$ be a regular function on a slice domain $D$. Let $I, J \in \bS$ such that $I \perp J$. Then there exist holomorphic functions $F, G : D \cap \bC_I \to \bC_I$ such that
\begin{align*}
f_I(z) = F(z) + J G(z)
\end{align*}
for all $z \in D \cap \bC_I$.

\end{lemma}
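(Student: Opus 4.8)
The plan is to reduce the statement to a pointwise change of basis followed by an application of the regularity equation $\bar{\partial}_I f_I = 0$. By Proposition~\ref{prop-inner-cross-prod-relations}(ii), since $I \perp J$ the quadruple $1, I, J, K$ with $K := IJ$ is a real basis of $\bH$ obeying the same multiplication rules as $1, i, j, k$; in particular $\bH = \bC_I \oplus \bC_I J$ as real vector spaces, where $\bC_I = \bR + \bR I$ and $\bC_I J = \bR J + \bR K$. Fixing $z = x + Iy \in D \cap \bC_I$, I would expand $f_I(z)$ in this basis as $f_I(z) = u_0(z) + I u_1(z) + J u_2(z) + K u_3(z)$ with $u_0, \dots, u_3$ real-valued; since $K = IJ$ and the $u_m$ are real, the last two terms regroup as $(u_2 + I u_3)J$. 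Setting $F := u_0 + I u_1$ and $G := u_2 + I u_3$ gives $\bC_I$-valued functions with $f_I = F + GJ$ (equivalently $F + J\tilde{G}$ after relabeling, since $\bC_I J = J\bC_I$). Because $D$ is a slice domain, $D \cap \bC_I$ is a domain in $\bC_I \cong \bC$, so it remains only to prove that $F$ and $G$ are holomorphic there.

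The analytic core is the observation that $\bar{\partial}_I$ commutes with right multiplication by the constant $J$. Since $\partial_x G$ and $\partial_y G$ take values in the commutative field $\bC_I$, multiplication on the left by $I$ leaves them in $\bC_I$, and $J$ simply passes to the right of the derivatives; a short computation then gives
\begin{align*}
\bar{\partial}_I(GJ) = \tfrac{1}{2}\bigl(\partial_x(GJ) + I\,\partial_y(GJ)\bigr) = \tfrac{1}{2}\bigl(\partial_x G + I\,\partial_y G\bigr)J = (\bar{\partial}_I G)\,J.
\end{align*}
Applying this to $f_I = F + GJ$ yields $0 = \bar{\partial}_I f_I = \bar{\partial}_I F + (\bar{\partial}_I G)J$, where the first summand lies in $\bC_I$ and the second in $\bC_I J$.

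Since $\bH = \bC_I \oplus \bC_I J$ is a direct sum of real subspaces, the two summands must vanish separately: $\bar{\partial}_I F = 0$ and $\bar{\partial}_I G = 0$. Reading $\bar{\partial}_I = \tfrac{1}{2}(\partial_x + I\partial_y)$ as the Cauchy--Riemann operator under the identification $\bC_I \cong \bC$ (with $I$ playing the role of the imaginary unit), these are exactly the Cauchy--Riemann equations for $F$ and $G$; together with the continuity of the partial derivatives guaranteed by the definition of regularity, this shows $F$ and $G$ are holomorphic on $D \cap \bC_I$, as required. I expect the only delicate point to be the bookkeeping in the noncommutative setting---specifically, keeping $J$ on a fixed side so that $\bar{\partial}_I$ passes through it, and confirming that $\bC_I$ and $\bC_I J$ are genuinely complementary real subspaces so that the two components of the regularity equation decouple.
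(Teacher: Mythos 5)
Your argument is, in substance, the standard proof of the Splitting Lemma, and its analytic core is sound: the paper itself offers no proof (it simply cites \cite{cgss} and \cite[Lemma 1.3]{GSS}), and your route --- decomposing $\bH = \bC_I \oplus \bC_I J$ via Proposition \ref{prop-inner-cross-prod-relations}(ii), noting that $\bar{\partial}_I$ passes through right multiplication by the constant $J$, and then decoupling the two components of $\bar{\partial}_I f_I = 0$ into Cauchy--Riemann equations for $F$ and $G$ --- is exactly the argument in the cited sources.

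One step, however, is genuinely wrong: the parenthetical claim that $f_I = F + GJ$ is ``equivalently $F + J\tilde{G}$ after relabeling, since $\bC_I J = J\bC_I$.'' The relabeling is not innocent. Since $J$ anticommutes with $I$, for $G(z) \in \bC_I$ one has $G(z)J = J\,\overline{G(z)}$, so moving $J$ to the left replaces $G$ by its conjugate $\bar{G}$, which is anti-holomorphic rather than holomorphic (unless $G$ is constant). In fact, under the paper's left regularity convention the statement as printed, $f_I(z) = F(z) + JG(z)$ with $G$ holomorphic, is false in general: the function $f(q) = qJ$ is regular, its restriction is $f_I(z) = zJ = J\bar{z}$, and $\bar{z}$ is not holomorphic. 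What is true --- and what you actually proved --- is the form $f_I(z) = F(z) + G(z)J$, which is how \cite[Lemma 1.3]{GSS} states it; the version in this paper is a misprint. So your proof is correct for the intended statement, but you should delete the equivalence claim and instead note explicitly that the two placements of $J$ are not interchangeable and that the statement must be read with $J$ on the right.
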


Using the Splitting Lemma, a quaternionic analogue of the Identity Principle for regular functions was proved in \cite{cgss}.

\begin{theorem}
\label{thm1.12-gss}
(see \cite{cgss} or \cite[Theorem 1.12]{GSS})
(Identity Principle)
Let $f, g$ be regular functions on a slice domain $D$. Assume that there exists an element $I \in \bS$ such that the set $\{z \in D \cap \bC_I \; | \; f(z) = g(z)\}$ contains an accumulation point in $D \cap \bC_I$. Then $f = g$ on $D$.

\end{theorem}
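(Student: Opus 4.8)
The plan is to reduce the statement to the classical Identity Theorem for holomorphic functions of one complex variable, apply it slice by slice, and then propagate the vanishing across all slices using the real axis as a common thread. First I would set $h := f - g$. Since regular functions are closed under $\bR$-linear combinations (the defining operator is $\bR$-linear in $f$), the function $h$ is again regular on $D$, and the hypothesis says that the zero set $Z := \{z \in D \cap \bC_I \; | \; h(z) = 0\}$ has an accumulation point in $D \cap \bC_I$. Fix $J \in \bS$ with $I \perp J$; by the Splitting Lemma (Lemma \ref{lem1.3-gss}) there are holomorphic $F, G : D \cap \bC_I \to \bC_I$ with $h_I(z) = F(z) + J G(z)$. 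Because $1, I, J, K = IJ$ form a basis of $\bH$ (Proposition \ref{prop-inner-cross-prod-relations}), writing $F = a + Ib$ and $G = c + Id$ with $a, b, c, d$ real-valued shows that $F(z) + J G(z) = 0$ forces $F(z) = G(z) = 0$; hence both $F$ and $G$ vanish on $Z$.

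Now $D \cap \bC_I$ is a domain in $\bC_I \cong \bC$, and $F, G$ are holomorphic functions on it vanishing on a set with an accumulation point in $D \cap \bC_I$. The classical Identity Theorem then gives $F \equiv 0$ and $G \equiv 0$, so $h_I \equiv 0$, i.e.\ $h$ vanishes on all of $D \cap \bC_I$. In particular $h$ vanishes on $D \cap \bR$, which is nonempty by the definition of a slice domain and open in $\bR$, hence contains a nondegenerate interval whose accumulation points lie in $D \cap \bR$ itself.

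The main obstacle is to pass from vanishing on the single slice $\bC_I$ to vanishing on all of $D$, and this is exactly where the slice-domain hypothesis does its work. For an arbitrary $K \in \bS$, I would repeat the splitting argument on $D \cap \bC_K$: pick $L \in \bS$ with $L \perp K$ and write $h_K = \tilde{F} + L \tilde{G}$ with $\tilde{F}, \tilde{G}$ holomorphic on the domain $D \cap \bC_K$. Since $\bR \subset \bC_K$ for every $K$, the set $D \cap \bR$ on which $h$ already vanishes lies in $D \cap \bC_K$ and accumulates there; the classical Identity Theorem again yields $\tilde{F} \equiv \tilde{G} \equiv 0$, so $h \equiv 0$ on $D \cap \bC_K$. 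As $K \in \bS$ was arbitrary and $\bH = \bigcup_{K \in \bS} \bC_K$, we conclude $h \equiv 0$ on $D$, that is $f = g$.

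The one point requiring care is that the real axis really does furnish accumulation points inside each $D \cap \bC_K$, and this is guaranteed precisely because $D$ is a slice domain, so that $D \cap \bR$ is a nonempty open subset of $\bR$; without the connectedness of each $D \cap \bC_K$ and the nonemptiness of $D \cap \bR$, the slicewise argument could not be glued back together. Apart from this bridging step, every ingredient is a direct transcription of the one-variable Identity Theorem through the Splitting Lemma.
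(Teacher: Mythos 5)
Your proof is correct and follows essentially the same route as the argument the paper points to (it cites the result from \cite{cgss} and \cite[Theorem 1.12]{GSS}, noting explicitly that it is proved ``using the Splitting Lemma''): reduce to the classical Identity Theorem slice by slice via Lemma \ref{lem1.3-gss}, using that $h_I = F + JG = 0$ forces $F = G = 0$ componentwise in the basis $1, I, J, IJ$, and then propagate vanishing to every other slice through the nonempty open set $D \cap \bR$ common to all complex lines. Both the slice-wise splitting and the use of the real axis as the bridge are exactly the standard mechanism, so there is nothing to add.
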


The values of regular functions in distinct complex lines are related in the following.
\begin{theorem}
\label{cor-1.16-gss}
(See \cite{cgss} or \cite[ Corollary 1.16]{GSS})

Let $f$ be a regular function on a symmetric slice domain $D$ and let $x + y\bS \subset D$. For all $I, J \in \bS$,
\begin{align*}
f(x + yJ) = \dfrac{1}{2}(f(x + yI) + f(x - yI)) + \dfrac{JI}{2}(f(x - yI) - f(x + yI)).
\end{align*}

\end{theorem}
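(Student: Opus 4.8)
The plan is to produce a regular function $g$ on $D$ whose defining formula is exactly the right-hand side of the claimed identity, and then to force $g = f$ by the Identity Principle (Theorem \ref{thm1.12-gss}). Concretely, I would define $g : D \to \bH$ by writing each $q \in D$ uniquely as $q = x + yJ$ with $x, y \in \bR$, $y \ge 0$, and $J \in \bS$ (with $g(x) := f(x)$ at real points $y = 0$), and setting
\begin{align*}
g(x + yJ) = \frac{1}{2}\bigl(f(x+yI) + f(x-yI)\bigr) + \frac{JI}{2}\bigl(f(x-yI) - f(x+yI)\bigr).
\end{align*}
Here the hypothesis that $D$ is symmetric is used: since $x + yJ \in D$ forces the whole sphere $[x+y\bS]$ into $D$, both points $x \pm yI$ lie in $D$ and the right-hand side makes sense.

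The next step is to record the restriction of $g$ to an arbitrary slice $\bC_L$. I would show that for every $L \in \bS$,
\begin{align*}
g_L(x + tL) = \frac{1}{2}\bigl(f(x+tI)+f(x-tI)\bigr) + \frac{LI}{2}\bigl(f(x-tI)-f(x+tI)\bigr)
\end{align*}
holds for all real $t$ (not merely $t \ge 0$). For $t \ge 0$ this is the definition; for $t < 0$ one rewrites $x + tL = x + (-t)(-L)$ with $-t > 0$ and uses $(-L)I = -LI$, and after rearranging one recovers the very same formula. In particular $g_L$ is of class $C^1$ because $f_I$ is, so it suffices — the definition of regularity being purely slice-by-slice — to check the Cauchy--Riemann equation on each slice; no global continuity of $g$ across slices is needed.

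The heart of the argument is verifying that $g$ is regular. Writing $a(x,t) = f(x+tI)$ and $b(x,t) = f(x-tI)$, the regularity of $f$ on the slice $\bC_I$ gives $\partial_x a = -I\,\partial_t a$ and $\partial_x b = I\,\partial_t b$ (the sign flip in the second coming from the chain rule applied to $x - tI$). Substituting these into $\partial_x g_L + L\,\partial_t g_L$ and simplifying with $I^2 = L^2 = -1$ together with $(LI)I = -L$ and $L(LI) = -I$, I expect the eight resulting terms to cancel in pairs, giving $\bar{\partial}_L g_L = 0$, so that $g$ is regular on $D$. I anticipate this computation to be the main obstacle — not because it is deep, but because the noncommutativity of $\bH$ forces the constant factor $LI$ to stay strictly to the left of $a$ and $b$ throughout, and a single misplaced factor would destroy the cancellation; the bookkeeping must be done with care.

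Finally, evaluating the defining formula at $J = I$ and at $J = -I$ and using $I^2 = -1$ collapses the right-hand side to $f(x+yI)$ and $f(x-yI)$ respectively, so $g = f$ at every point of $D \cap \bC_I$. Thus $f$ and $g$ are two regular functions on the slice domain $D$ agreeing on the whole complex line $D \cap \bC_I$, which is open and hence teeming with accumulation points; the Identity Principle (Theorem \ref{thm1.12-gss}) then yields $g = f$ on all of $D$. Reading this equality at an arbitrary point $x + yJ$ is precisely the asserted representation formula.
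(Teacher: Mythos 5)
Your proof is correct. The paper itself does not prove this statement --- it is quoted from \cite{cgss} and \cite[Corollary 1.16]{GSS} --- so there is no internal argument to compare against; what you have written is essentially the standard proof from those references, and it amounts to reproving the paper's Lemma \ref{lem-extension}: the well-definedness of $g$ (using symmetry of $D$ and the check that the formula is insensitive to replacing $(y,J)$ by $(-y,-J)$) plus the slice-by-slice Cauchy--Riemann verification show that the right-hand side defines a regular function agreeing with $f_I$ on $D \cap \bC_I$, and the Identity Principle forces it to equal $f$ everywhere. Your key computation is sound: with $a = f(x+tI)$, $b = f(x-tI)$, the relations $a_x = -Ia_t$, $b_x = Ib_t$ give $\partial_x g_L = \frac{I}{2}(b_t - a_t) - \frac{L}{2}(a_t + b_t)$ and $L\,\partial_t g_L = \frac{L}{2}(a_t + b_t) - \frac{I}{2}(b_t - a_t)$, which cancel exactly, the constant $LI$ correctly remaining on the left throughout.
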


It is known that pointwise product of regular functions is in general not regular. Motivated by the multiplication between polynomials with coefficients in a noncommutative algebra (see \cite{lam}), Gentili and Stoppato \cite{genstop1} extended this notion of multiplication for regular functions defined on the open unit ball of $\bH$ in terms of their power series expansion, which they call the \textit{$\star$-product}. The notion of $\star$-product was generalized to regular functions on an arbitrary symmetric slice domain in \cite{cgss}, based on the extension lemma below that expresses the relation between values of regular function in different complex lines.

\begin{lemma}
\label{lem-extension}
(see \cite{cgss})

Let $D$ be a symmetric slice domain, and let $I$ be an element in $\bS$. Let $f_I : D \cap \bC_I \to \bH$ be a homolomorphic function on $D \cap \bC_I$. For each $J \in \bS$, define
\begin{align*}
f(x + Jy)  := \dfrac{1}{2}(f_I(x + yI) + f_I(x - yI)) + \dfrac{JI}{2}(f_I(x - yI) - f(x + yI))
\end{align*}
for all $x + Jy \in D$. Then $f : D \to \bH$ is a regular function that coincides with $f_I$ on $D \cap \bC_I$, i.e., $f$ extends $f_I$ to the whole domain $D$. Furthermore, $f$ is the unique extension of $f_I$ to $D$, and we denote it by $\text{ext}(f_I)$.

\end{lemma}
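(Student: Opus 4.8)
The statement asserts that the explicitly defined function $f(x+Jy)$, built by averaging the values $f_I(x\pm yI)$ of a given holomorphic $f_I$ on $D\cap\bC_I$, is regular on all of $D$, coincides with $f_I$ on $D\cap\bC_I$, and is the unique such regular extension. My plan is to verify these three claims in turn. The structure of the definition is exactly the formula appearing in Theorem \ref{cor-1.16-gss}, so the philosophy is clear: any regular extension \emph{must} satisfy that identity, and the construction simply promotes the identity to a definition. The work lies in checking that the function so defined genuinely is regular.

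First I would verify the coincidence on $D\cap\bC_I$. Setting $J=I$ in the defining formula, the cross-term carries the factor $II=-1$, and a direct simplification collapses the right-hand side to $f_I(x+Iy)$; setting $J=-I$ likewise recovers $f_I(x-Iy)$. So $f$ restricts to $f_I$ on the chosen complex line. For self-consistency one should also check that $f$ is well defined independently of the representation $x+Jy=x+(-J)(-y)$ of a point, which again follows from a short computation using that replacing $(J,y)$ by $(-J,-y)$ leaves the averaged expression unchanged.

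The central step is to prove that $f$ is regular, i.e.\ that for every $K\in\bS$ the restriction $f_K$ satisfies $\bar\partial_K f_K=0$. Here I would fix an auxiliary $J\in\bS$ with $I\perp J$ and invoke the Splitting Lemma \ref{lem1.3-gss} applied to $f_I$: write $f_I(x+Iy)=F(x+Iy)+J\,G(x+Iy)$ with $F,G:D\cap\bC_I\to\bC_I$ holomorphic. Because $F,G$ are holomorphic in the single complex variable $z=x+Iy$, they satisfy the Cauchy--Riemann equations on $D\cap\bC_I$. I would substitute the splitting into the defining averaging formula for an arbitrary slice $\bC_K$, expand using the basis $1,I,J,IJ$ of $\bH$ furnished by Proposition \ref{prop-inner-cross-prod-relations}(ii), and compute $\tfrac12(\partial_x + K\,\partial_y)f_K$ componentwise. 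The Cauchy--Riemann relations for $F$ and $G$, together with the algebraic identities $I^2=J^2=-1$ and the anticommutation of $I,J$, should force each component to vanish, yielding $\bar\partial_K f_K=0$. This computation, reducing quaternionic regularity to the holomorphy of the complex components $F,G$, is the main obstacle: one must track the noncommutative multiplications carefully and confirm that the mixed terms cancel rather than assuming it.

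Finally, uniqueness is the easiest part and follows from the Identity Principle. If $g$ is any regular extension of $f_I$ to $D$, then $g$ and $f$ are both regular on the symmetric slice domain $D$ and agree on $D\cap\bC_I$, which contains an accumulation point (indeed a whole subdomain of $\bC_I$); by Theorem \ref{thm1.12-gss} we conclude $g=f$ on $D$. Alternatively, since $D$ is symmetric, any regular $g$ extending $f_I$ must obey the formula of Theorem \ref{cor-1.16-gss}, which is literally the defining formula for $f$, so $g=f$ identically. This pins down $\mathrm{ext}(f_I)$ as claimed.
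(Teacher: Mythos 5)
The paper offers no proof of this lemma at all: it is quoted from \cite{cgss} (it is the Extension Lemma of Colombo--Gentili--Sabadini--Struppa, Lemma 1.20 in \cite{GSS}), so there is nothing internal to compare against; your proposal must stand on its own, and it essentially does. Your three-part plan (coincidence on $D\cap\bC_I$, well-definedness under $(J,y)\mapsto(-J,-y)$, regularity slice by slice, uniqueness via the Identity Principle) is the standard argument from the cited source, and the central cancellation you flag as the ``main obstacle'' does go through. Two remarks. First, the splitting detour is unnecessary and slightly misstated: Lemma \ref{lem1.3-gss} as quoted applies to functions that are already regular on a slice domain, not to a bare holomorphic $f_I:D\cap\bC_I\to\bH$; you can split $f_I$ by hand using $\bH=\bC_I\oplus\bC_I J$, but you do not need to. Holomorphy of $f_I$ in the form $\partial_y f_I=I\,\partial_x f_I$ suffices: writing the defining formula with $J=K$, one computes
\begin{align*}
\partial_x f + K\,\partial_y f
&= \tfrac{1}{2}\bigl(\partial_x f_I(z)+\partial_x f_I(\bar z)\bigr)
 + \tfrac{KI}{2}\bigl(\partial_x f_I(\bar z)-\partial_x f_I(z)\bigr) \\
&\quad - \tfrac{1}{2}\bigl(\partial_x f_I(z)+\partial_x f_I(\bar z)\bigr)
 + \tfrac{KI}{2}\bigl(\partial_x f_I(z)-\partial_x f_I(\bar z)\bigr) = 0,
\end{align*}
where $z=x+Iy$, using only $K(KI)=-I$, $I^2=-1$ and the chain rule sign $\partial_y\bigl(f_I(x-Iy)\bigr)=-(\partial_y f_I)(x-Iy)$; all multiplications are on the left, so no noncommutative mixing actually arises. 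Second, your uniqueness argument is fine either way: the Identity Principle (Theorem \ref{thm1.12-gss}) applies because $D\cap\bC_I$ is a domain, and the alternative via the Representation Formula (Theorem \ref{cor-1.16-gss}) is immediate since that formula is literally the definition of $f$.
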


We now recall the notion of the $\star$-product of regular functions. Let $f, g$ be regular functions on a symmetric slice domain $D$. Let $I, J \in \bS$ such that $I, J$ are orthogonal as vectors in $\bR^4$. Using Lemma \ref{lem1.3-gss}, there exist holomorphic functions $F, G, H, K : D \cap \bC_I \to \bC_I$ such that 
\begin{align*}
f_I(z) &= F(z) + G(z)J, \\
 g_I(z) &= H(z) + K(z) J
 \end{align*}
 for all $z \in D \cap \bC_I$.
 
 For every $z \in D \cap \bC_I$, define
 \begin{align*}
 f_I \star g_I(z) := (F(z)H(z) - G(z)\overline{K(\bar{z})}) + (F(z)K(z) + G(z)\overline{H(\bar{z})})J.
 \end{align*}
It is clear that $f_I\star g_I : D \cap \bC_I \to \bH$ is a holomorphic function, and it thus follows from Lemma \ref{lem-extension} that there exists a unique extension of $f_I \star g_I$, denoted by $\text{ext}(f_I \star g_I)$, to the whole domain $D$. 
\begin{definition}
\label{def-regular-product}

The function $f \star g : D \to \bH$ defined by 
\begin{align*}
f \star g(q) := \text{ext}(f_I \star g_I)(q)
\end{align*}
for every $q \in D$, is called the \textbf{$\star$-product of $f$ and $g$}.

\end{definition}

An important class of regular functions considered in our paper is slice preserving regular functions.
\begin{definition}
\label{def-slice-preserving-reg-func}

A regular function $f$ on a symmetric slice domain $D$ is said to be \textbf{slice preserving} if for every $I \in \bS$, $f(D \cap \bC_I) \subset \bC_I$, that is, the restriction of $f$ to each complex line $\bC_I$ is a holomorphic function taking values in the same complex line.

Throughout this paper, we denote by $\cO_s(D)$ the set of all slice preserving regular functions on a symmetric slice domain $D$.

\end{definition}

The following result is well-known (see \cite[Lemma 1.30]{GSS}).

\begin{lemma}
\label{lem-commutative-O_s}

Let $f, g$ be regular functions on a symmetric slice domain $D$ such that $f$ is slice preserving. Then $fg$ is regular on $D$ and
\begin{align*}
f \star g = g \star f = fg.
\end{align*}

\end{lemma}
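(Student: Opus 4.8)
The plan is to reduce everything to a single complex line $\bC_I$: use the splitting property together with the slice preserving hypothesis to kill one of the two components of $f_I$, carry out the two $\star$-product computations by hand, and then transport the resulting slice identities back to all of $D$ via the Identity Principle. Concretely, I would fix $I, J \in \bS$ with $I \perp J$ and write, in the form used in Definition \ref{def-regular-product}, $f_I = F + GJ$ and $g_I = H + KJ$ with $F, G, H, K : D \cap \bC_I \to \bC_I$ holomorphic. By Proposition \ref{prop-inner-cross-prod-relations}(ii) the subspaces $\bC_I = \mathrm{span}_\bR\{1, I\}$ and $\bC_I J = \mathrm{span}_\bR\{J, IJ\}$ are orthogonal complements in $\bH$, so the slice preserving condition (that $f_I$ take values in $\bC_I$) is \emph{equivalent} to $G \equiv 0$, whence $f_I = F$. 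Slice preservation also yields a reflection identity I will need later: since $D \cap \bR$ is a nonempty open subset of $\bR$ and $f(x)$ lies in every $\bC_I$, hence in $\bR$, for real $x$, the holomorphic function $F = f_I$ is real on $D \cap \bR$; as $D \cap \bC_I$ is symmetric under $z \mapsto \bar z$ (because $D$ is symmetric), the Schwarz reflection principle gives $\overline{F(\bar z)} = F(z)$ on all of $D \cap \bC_I$.

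Next I would establish that the pointwise product $fg$ is regular. On the slice one has $(fg)_I = f_I g_I$, and because $f_I$ is $\bC_I$-valued it commutes with $I$; a short computation then produces the Leibniz rule $\bar\partial_I(f_I g_I) = (\bar\partial_I f_I) g_I + f_I (\bar\partial_I g_I)$, which vanishes since both $f$ and $g$ are regular, so $fg$ is regular. Substituting $G \equiv 0$ into the formula of Definition \ref{def-regular-product} gives $f_I \star g_I = FH + FKJ = F(H + KJ) = f_I g_I$. Thus $f \star g$ and $fg$ are both regular and agree on $D \cap \bC_I$, so by the Identity Principle (Theorem \ref{thm1.12-gss}) they coincide on $D$; this yields $f \star g = fg$.

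It remains to handle $g \star f$, and this is where the only genuine obstacle lies. Interchanging the roles of the two functions in Definition \ref{def-regular-product} and again setting $G \equiv 0$ gives $g_I \star f_I = HF + K\,\overline{F(\bar z)}\,J$. Since $F, H, K$ take values in the commutative field $\bC_I$, this equals $f_I g_I = FH + FKJ$ exactly when $\overline{F(\bar z)} = F(z)$ — precisely the reflection identity secured in the first step. (It is worth flagging that the naive pointwise product $gf$ would instead contribute $K\,\overline{F(z)}\,J$ and is \emph{not} what the lemma asserts; the point is that slice preservation makes the $\star$-product, not the right-hand pointwise product, symmetric.) With the reflection identity in hand, $g_I \star f_I = (fg)_I$, and a final application of the Identity Principle gives $g \star f = fg$, completing the proof. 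The delicate bookkeeping throughout is tracking which factors lie in $\bC_I$ and where the conjugation $z \mapsto \bar z$ inherent in the $\star$-product intervenes, so that the slice preserving hypothesis is deployed in both of its forms: the vanishing $G \equiv 0$ and the Schwarz reflection identity $\overline{F(\bar z)} = F(z)$.
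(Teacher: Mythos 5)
Your proof is correct, but it follows a genuinely different --- and more self-contained --- route than the paper's. The paper's proof is essentially a citation: it invokes the proof of Lemma 1.30 of \cite{GSS} for the regularity of $fg$ and for $f \star g = fg$, and then disposes of $g \star f$ in one sentence by asserting that $f_I$ ``commutes with'' $g_I$ and applying the Identity Principle. You instead work everything out from the Splitting Lemma (Lemma \ref{lem1.3-gss}): slice preservation at the slice $\bC_I$ forces $G \equiv 0$; a Leibniz-rule computation using $I f_I = f_I I$ yields regularity of $fg$; substituting $G \equiv 0$ into the definition of the $\star$-product gives $f_I \star g_I = f_I g_I$ on the nose, while $g_I \star f_I = HF + K\,\overline{F(\bar z)}\,J$ must be reconciled with $(fg)_I$ via the reflection identity $\overline{F(\bar z)} = F(z)$, which you correctly derive from the fact that a slice preserving function is real-valued on $D \cap \bR$ together with Schwarz reflection on the conjugation-symmetric domain $D \cap \bC_I$. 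The payoff of your route is rigor precisely where the paper is loose: the paper's claim that $f_I$ commutes with $g_I$ is false as a pointwise statement (take $f = \mathrm{id}$ and $g \equiv J$; then $f_I g_I = zJ$ while $g_I f_I = \bar z J$), and your computation isolates the actual mechanism --- the conjugation $z \mapsto \bar z$ built into the $\star$-product, cancelled by the reflection identity --- that makes $g \star f$ equal the \emph{left} pointwise product $fg$ rather than $gf$, a distinction you rightly flag. What the paper's approach buys in exchange is brevity, by outsourcing the entire computation to \cite{GSS}.
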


\begin{proof}

The fact that $fg$ is regualr and that $f\star g = fg$ follows from the proof of  \cite[Lemma 1.30]{GSS}. Since the restriction $f_I$ of $f$ is a holomorphic function from $D \cap \bC_I$ into $\bC_I$, $f_I$ commutes with the restriction $g_I$ of $g$, where $I$ is arbitrary in $\bS$. By the Identity Principle (see Theorem \ref{thm1.12-gss}), $f \star g = g \star f$.

\end{proof}

The composition of regular functions is not in general regular. For slice preserving regular functions, we recall the following result that will be useful in the last section.
\begin{lemma}
\label{lem-comp-of-slice-reg-func}
(see \cite[Lemma 1.32]{GSS})

Let $D, D'$ be symmetric slice domains in $\bH$. Let $f : D \to D'$ and $g : D' \to \bH$ be regular functions such that $f$ is a slice preserving function. Then the composition $g \circ f$ is regular. Furthermore if $g$ is also a slice preserving function, then $g \circ f$ is a slice preserving regular function.

\end{lemma}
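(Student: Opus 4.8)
The plan is to verify the statement separately for its two assertions, since the lemma of Lemma~\ref{lem-comp-of-slice-reg-func} makes two claims: first that $g \circ f$ is regular whenever $f$ is slice preserving, and second that $g \circ f$ is slice preserving when both $f$ and $g$ are. I would begin by recalling the key structural fact that, because $f$ is slice preserving, for each $I \in \bS$ we have $f(D \cap \bC_I) \subset \bC_I$, so the restriction $f_I$ is an honest holomorphic self-map of the complex line $\bC_I$ (into $D' \cap \bC_I$). This is what allows the composition to make sense slice-by-slice.

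\smallskip

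For the regularity of $g \circ f$, the approach I would take is to work on a single complex line $\bC_I$ and show that $(g \circ f)_I = g_I \circ f_I$ is holomorphic as a map $D \cap \bC_I \to \bH$, then apply the Splitting Lemma (Lemma~\ref{lem1.3-gss}) in reverse together with a Cauchy--Riemann computation to conclude regularity on all of $D$. Concretely, I would pick $J \in \bS$ with $I \perp J$ and use Lemma~\ref{lem1.3-gss} to write $g_I(w) = F(w) + J G(w)$ for holomorphic $F, G : D' \cap \bC_I \to \bC_I$. Since $f_I$ maps into $\bC_I$ and is itself holomorphic (as the restriction of a regular slice preserving function), the composites $F \circ f_I$ and $G \circ f_I$ are holomorphic $\bC_I$-valued functions on $D \cap \bC_I$. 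Hence
\begin{align*}
(g \circ f)_I(z) = g_I(f_I(z)) = (F \circ f_I)(z) + J\,(G \circ f_I)(z)
\end{align*}
exhibits $(g\circ f)_I$ in the split form, which shows that $(g \circ f)_I$ satisfies $\bar{\partial}_I (g \circ f)_I = 0$ on $D \cap \bC_I$. Since $I \in \bS$ was arbitrary, $g \circ f$ is regular on $D$.

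\smallskip

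For the second assertion, suppose in addition that $g$ is slice preserving, so $g(D' \cap \bC_I) \subset \bC_I$ for every $I$. Then for $q \in D \cap \bC_I$ we have $f(q) \in \bC_I$ (as $f$ is slice preserving), and therefore $g(f(q)) \in \bC_I$ as well (as $g$ is slice preserving). Thus $(g \circ f)(D \cap \bC_I) \subset \bC_I$ for every $I \in \bS$, which is precisely the condition in Definition~\ref{def-slice-preserving-reg-func} for $g \circ f$ to be slice preserving. Combined with the regularity established above, this shows $g \circ f$ is a slice preserving regular function.

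\smallskip

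The main technical point to be careful about is the step asserting that $(g\circ f)_I$ inherits the split form from $g_I$, since one must know that $f_I$ lands inside the domain $D' \cap \bC_I$ on which $F$ and $G$ are defined; this is guaranteed by the hypothesis $f : D \to D'$ together with slice preservation of $f$. I expect the only real subtlety is confirming that the holomorphy of $f_I$ as a map into $\bC_I$ (rather than merely into $\bH$) is legitimate, which again follows from slice preservation: the image lies in $\bC_I$, and a regular function restricted to a slice is holomorphic in the usual complex sense, so $f_I$ is a genuine holomorphic map of planar domains and composition with the holomorphic $F, G$ is unproblematic.
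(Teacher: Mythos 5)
The paper itself gives no proof of this lemma --- it is quoted directly from \cite[Lemma 1.32]{GSS} --- so there is no internal argument to compare against; your proposal reconstructs what is essentially the proof in that reference, and its structure is correct: slice preservation of $f$ makes $f_I$ a genuine holomorphic map of $D \cap \bC_I$ into $D' \cap \bC_I$, one splits $g_I$ along the slice, composes, and verifies the Cauchy--Riemann equation slice by slice; the second assertion is then the one-line set-theoretic check you give.

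One point should be repaired before this is written up: the sidedness of $J$ in the splitting matters. You (following the paper's literal statement of Lemma \ref{lem1.3-gss}) write $g_I = F + JG$ with $F, G$ holomorphic, but with $J$ on the \emph{left} the Cauchy--Riemann computation fails, because $IJ = -JI$ gives
\begin{align*}
\bar{\partial}_I(J G) = \tfrac{1}{2}\bigl(J\,\partial_x G + IJ\,\partial_y G\bigr) = \tfrac{1}{2}\,J\bigl(\partial_x G - I\,\partial_y G\bigr),
\end{align*}
which does not vanish for holomorphic $G$ (it vanishes identically only when $G$ is anti-holomorphic). The correct splitting --- the one actually proved in \cite[Lemma 1.3]{GSS}, and the one this paper itself uses when defining the $\star$-product --- puts $J$ on the \emph{right}: $g_I = F + GJ$ with $F, G : D' \cap \bC_I \to \bC_I$ holomorphic. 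With that form your argument goes through verbatim, since
\begin{align*}
\bar{\partial}_I\bigl((F \circ f_I) + (G \circ f_I)J\bigr) = \bar{\partial}_I(F \circ f_I) + \bigl(\bar{\partial}_I(G \circ f_I)\bigr)J = 0,
\end{align*}
as $F \circ f_I$ and $G \circ f_I$ are holomorphic $\bC_I$-valued functions on $D \cap \bC_I$. This is a defect inherited from the paper's (mis)statement of the Splitting Lemma rather than a flaw in your reasoning, but the proof as written would not survive the $\bar{\partial}_I$ computation it appeals to unless the splitting is taken in the right-sided form.
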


We will need the following results, regarding zeros of regular functions and their $\star$-products.

\begin{lemma}
\label{lem-spherical-multiplicity}
(see \cite{pereira, ps, ser-siu})

Let $f$ be a regular function on a symmetric slice domain $D$. 
\begin{itemize}

\item [(i)] A point $p$ in $D$ is a zero of $f$ if and only if there exists a regular function $g : D \to \bH$ such that 
\begin{align*}
f(q) = (q - p)\star g(q).
\end{align*}

\item [(ii)] $f$ vanishes identically on $[x + \bS y]$ for some real numbers $x, y \in \bR$ with $y \ne 0$ if and only if there exists a regular function $g : D \to \bH$ such that
\begin{align*}
f(q) = ((q - x)^2 + y^2)g(q).
\end{align*}

\end{itemize}

\end{lemma}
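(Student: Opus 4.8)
The plan is to prove both statements by the same three-step scheme: restrict $f$ to a single complex line $\bC_I$, apply the Splitting Lemma (Lemma \ref{lem1.3-gss}) to reduce the vanishing hypothesis to the vanishing of two \emph{holomorphic} $\bC_I$-valued components, invoke the classical factorization theorem for holomorphic functions of one complex variable on those components, and finally reassemble and globalize with the Identity Principle (Theorem \ref{thm1.12-gss}). In both parts the ``if'' direction is the easy one: if $f = (q-p)\star g$, then evaluating the $\star$-product shows $f$ vanishes at $p$ because the left factor $q-p$ does, and if $f = ((q-x)^2+y^2)g$, then since $(q-x)^2+y^2$ vanishes exactly on $[x+\bS y]$ so does $f$. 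The content is in the ``only if'' directions.

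For part (i), write $p = x_0 + I_0 y_0$ and take $I = I_0$ (when $p$ is real, $q-p$ is slice preserving and any $I$ works; I treat this case at the end). Choose $J \in \bS$ with $J \perp I_0$ and split $f_{I_0}(z) = F(z) + G(z)J$ with $F,G : D\cap\bC_{I_0}\to\bC_{I_0}$ holomorphic. Since $p \in \bC_{I_0}$, the values $F(p),G(p)$ lie in $\bC_{I_0}$, and because $\{1,I_0,J,I_0 J\}$ is an $\bR$-basis of $\bH$ (Proposition \ref{prop-inner-cross-prod-relations}), the equality $f(p) = F(p)+G(p)J = 0$ forces $F(p) = G(p) = 0$. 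The classical factorization theorem then gives holomorphic $\tilde F,\tilde G$ on $D\cap\bC_{I_0}$ with $F(z) = (z-p)\tilde F(z)$ and $G(z) = (z-p)\tilde G(z)$. Set $g := \mathrm{ext}(\tilde F + \tilde G J)$ via Lemma \ref{lem-extension}. A direct application of the $\star$-product formula of Definition \ref{def-regular-product} to the left factor $q - p$, whose splitting on $\bC_{I_0}$ is simply $z - p$ with zero $J$-component, shows $\big((q-p)\star g\big)_{I_0}(z) = (z-p)\,g_{I_0}(z) = f_{I_0}(z)$; the Identity Principle then upgrades this to $f = (q-p)\star g$ on all of $D$.

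For part (ii), set $m(q) := (q-x)^2 + y^2$. This is a real polynomial in $q$, hence slice preserving, and its restriction to any $\bC_I$ factors as $m_I(z) = (z-(x+Iy))(z-(x-Iy)) = (z-x)^2 + y^2$, with zero set exactly $[x+\bS y]$; moreover, by Lemma \ref{lem-commutative-O_s}, $m\,g$ is regular and coincides with $m\star g$ for every regular $g$. Now fix any $I\in\bS$. Since $[x+\bS y]\cap\bC_I = \{x+Iy,\, x-Iy\}$ and $f$ vanishes on the whole sphere, both $f(x+Iy)$ and $f(x-Iy)$ vanish; splitting $f_I = F + GJ$ as before, this forces $F$ and $G$ to vanish at both $x\pm Iy$. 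Factoring $m_I$ out of each, $F = m_I\tilde F$ and $G = m_I\tilde G$, and setting $g := \mathrm{ext}(\tilde F + \tilde G J)$ we obtain $(m g)_I(z) = m_I(z)g_I(z) = f_I(z)$. The Identity Principle again yields $f = m\,g$ on $D$.

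The main obstacle is the bookkeeping in the ``only if'' direction of part (i): one must verify that the restriction of the noncommutative $\star$-product $(q-p)\star g$ to $\bC_{I_0}$ collapses to the ordinary pointwise product $(z-p)g_{I_0}(z)$. This hinges on the left factor $q-p$ having a purely scalar splitting (vanishing $G$-component) on the line $\bC_{I_0}$ containing $p$, which is precisely why the choice $I = I_0$ is forced; for a line $\bC_I$ with $I\neq I_0$ the factor $q-p$ does not preserve the slice and no such simplification occurs. The remaining care is purely in matching the splitting convention ($f_I = F + GJ$ versus $F + JG$) with the $\star$-product formula, and in disposing of the real-$p$ case, where $q-p$ is itself slice preserving and the factorization reduces directly to the classical one.
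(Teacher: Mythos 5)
Your proof is correct. There is, however, nothing in the paper to compare it against: Lemma \ref{lem-spherical-multiplicity} is stated with citations to \cite{pereira, ps, ser-siu} and no proof is given, so the result is imported from the literature. Your argument is a valid, self-contained derivation along the standard lines of those sources: split $f$ on the slice containing the zero, apply the classical one-variable factorization to the two holomorphic components, extend via Lemma \ref{lem-extension}, and globalize by the Identity Principle (Theorem \ref{thm1.12-gss}). The two delicate points are handled correctly. First, the computation that $\bigl((q-p)\star g\bigr)_{I_0}(z)$ collapses to the pointwise product $(z-p)\,g_{I_0}(z)$ is exactly right, because the left factor has zero $J$-component on $\bC_{I_0}$; this is indeed why the argument must be run on the slice containing $p$, as you note. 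Second, your splitting convention $f_I = F + GJ$ is the one consistent with the paper's Definition \ref{def-regular-product} and with holomorphy of both components; the form $F + JG$ displayed in the paper's Lemma \ref{lem1.3-gss} would in fact force the second component to be anti-holomorphic (it is a transcription slip there), so you chose the correct variant. In part (ii), observing that $(q-x)^2+y^2$ is slice preserving, so that Lemma \ref{lem-commutative-O_s} converts the $\star$-product into the pointwise product, eliminates all noncommutative bookkeeping. One alternative worth knowing: part (ii) can also be deduced from part (i) applied twice, using $(q-p)\star(q-\bar p) = (q-x)^2+y^2$ together with Lemma \ref{lem-zeros-of-product-of-func} to locate the zero of the quotient at $\bar p$; your direct two-point factorization on a single slice is equally valid and somewhat shorter.
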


\begin{lemma}
\label{lem-zeros-of-product-of-func}
(see \cite{cgss, genstop1, genstop2})

Let $f, g$ be regular functions on a symmetric slice domain $D$, and let $q \in D$. Then $f \star g (q) = 0$ if and only if either $f(q) = 0$ or $f(q) \ne 0$ and $g(f(q)^{-1}q f(q)) = 0$.

\end{lemma}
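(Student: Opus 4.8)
The plan is first to reduce the biconditional to a single product formula, and then to verify that formula by a direct computation on one complex line, using the splitting of $f$ and $g$ together with the representation formula of Theorem \ref{cor-1.16-gss}.

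First I would observe that the statement follows at once from the identity
\begin{align*}
(f \star g)(q) =
\begin{cases}
0 & \text{if } f(q) = 0, \\
f(q)\, g\!\left(f(q)^{-1} q f(q)\right) & \text{if } f(q) \ne 0,
\end{cases}
\end{align*}
since $\bH$ is a division algebra: when $f(q) \ne 0$, the product $f(q)\, g(f(q)^{-1}qf(q))$ vanishes exactly when $g(f(q)^{-1}qf(q)) = 0$. I would also record that $f(q)^{-1}qf(q)$ lies on the same sphere as $q$; indeed, writing $q = x + Iy$ and $w = f(q)$, the real part $x$ is central, so $f(q)^{-1} q f(q) = x + (w^{-1}Iw)y = x + I'y$ with $I' := w^{-1}Iw \in \bS$.

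Second, since $q$ lies in some complex line $\bC_I$ (if $q \in \bR$ the formula is trivial, as then $f(q)^{-1}qf(q) = q$ and $(f\star g)(q) = f(q)g(q)$), I would fix that $I$, choose $J \in \bS$ with $J \perp I$, and use Lemma \ref{lem1.3-gss} to write $f_I(z) = F(z) + G(z)J$ and $g_I(z) = H(z) + K(z)J$ on $D \cap \bC_I$. Evaluating at $z = q$ and setting $A := F(q)$, $B := G(q) \in \bC_I$, Definition \ref{def-regular-product} gives
\begin{align*}
(f\star g)(q) = \big(A\,H(q) - B\,\overline{K(\bar q)}\big) + \big(A\,K(q) + B\,\overline{H(\bar q)}\big)J,
\end{align*}
and the task is to match this with $f(q)\,g(x + I'y)$.

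Third, I would expand $g(x + I'y)$ through the representation formula of Theorem \ref{cor-1.16-gss}, which expresses it in terms of $g_I(q)$ and $g_I(\bar q)$. Multiplying on the left by $w = A + BJ$ and using $w\,(w^{-1}Iw I) = IwI$ together with the elementary relation $IwI = -A + BJ$, the two half-sum terms collapse to the compact expression $f(q)\,g(x+I'y) = A\,g_I(q) + BJ\,g_I(\bar q)$. Expanding $g_I(q) = H(q) + K(q)J$ and $g_I(\bar q) = H(\bar q) + K(\bar q)J$, and pushing $J$ across the $\bC_I$-valued coefficients by the rule $Jz = \bar z J$ (so that $J\,H(\bar q) = \overline{H(\bar q)}\,J$ and $J\,K(\bar q)\,J = -\,\overline{K(\bar q)}$), reproduces exactly the two-component expression for $(f\star g)(q)$ above; the case $f(q) = 0$ forces $A = B = 0$, whence both components vanish. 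I expect the main obstacle to be the bookkeeping in this final step: the conjugations $\overline{H(\bar q)}$ and $\overline{K(\bar q)}$ arising from the definition of the $\star$-product must be made to agree with those produced by commuting $J$ past $\bC_I$-valued quantities, and one must track that $A, B$ are genuinely $\bC_I$-valued, hence commute with $I$, while $J$ anticommutes with $I$. Once the identity $IwI = -A + BJ$ and the commutation rule $Jz = \bar z J$ are in hand, the verification is a short, if noncommutative, calculation.
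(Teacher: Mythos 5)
Your proof is correct, but there is nothing internal to compare it against: the paper does not prove this lemma at all, it quotes it from \cite{cgss, genstop1, genstop2}, so the real comparison is with those references. What you establish is exactly the identity on which the cited proofs rest, namely
\begin{align*}
(f\star g)(q) =
\begin{cases}
0 & \text{if } f(q)=0,\\
f(q)\, g\!\left(f(q)^{-1} q f(q)\right) & \text{if } f(q)\ne 0,
\end{cases}
\end{align*}
after which the biconditional is immediate because $\bH$ is a division ring. The difference is in how the identity is obtained. Gentili and Stoppato derive it essentially by a power-series rearrangement ($f\star g(q) = \sum_m q^m f(q) b_m = f(q)\sum_m (f(q)^{-1}qf(q))^m b_m$ on balls centered at real points), with the general symmetric-slice-domain case then handled by extension; you instead verify it directly on an arbitrary symmetric slice domain using only tools this paper already quotes: the splitting of $f_I$ and $g_I$, the Representation Formula (Theorem \ref{cor-1.16-gss}) applied to $g$ at $x + I'y$ with $I' = w^{-1}Iw$, and the defining formula of the $\star$-product (Definition \ref{def-regular-product}). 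I checked the computation and it closes: for $w = A + BJ$ with $A, B \in \bC_I$ one has $wI'I = IwI = -A + BJ$, the two half-sums collapse to $A\,g_I(q) + BJ\,g_I(\bar q)$, and the rules $Jz = \bar z J$, $JzJ = -\bar z$ for $z \in \bC_I$ reproduce $(F(q)H(q) - G(q)\overline{K(\bar q)}) + (F(q)K(q) + G(q)\overline{H(\bar q)})J$ exactly; when $f(q) = 0$, the decomposition $\bH = \bC_I \oplus \bC_I J$ forces $A = B = 0$, so both components vanish. Your route buys self-containedness (no series expansions, no reduction to the ball), at the cost of the noncommutative bookkeeping you flag. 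Two small points worth making explicit in a final write-up: your splitting convention $f_I = F + GJ$ (coefficients to the left of $J$) is the one under which the paper defines the $\star$-product, not the literal form of Lemma \ref{lem1.3-gss} (which writes $F + JG$); the two differ by conjugating $G$, so this is harmless but should be stated. And the ``trivial'' real case $q \in \bR$ still deserves its one line, since $(f \star g)(q) = f(q)g(q)$ there follows from the same commutation rules applied to the defining formula with $q = \bar q$, not from nothing.
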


For the rest of this section, we describe a quaternionic analogue of meromorphic functions in complex function theory. We begin by recalling some basic notions that lead to a notion of semiregular functions--a quaternionic analogue of meromorphic functions.

\begin{definition}

\begin{itemize}

\item []

\item [(i)] Let $\sigma : \bH \times \bH \to \bR_{\ge 0}$ be the map defined by
\begin{align*}
\sigma(p, q) = 
\begin{cases}
|p - q| \; \; \text{if $p, q$ belong in the same complex line $\bC_I$ for some $I \in \bS$,} \\
\sqrt{(\Re(p) - \Re(q))^2 + (|\Im(p)| + |\Im(q)|)^2}.
\end{cases}
\end{align*}

\item [(ii)] Let $\tau : \bH \times \bH \to \bR_{\ge 0}$ be the map defined by
\begin{align*}
\tau(p, q) = 
\begin{cases}
|p - q| \; \; \text{if $p, q$ belong in the same complex line $\bC_I$ for some $I \in \bS$,} \\
\sqrt{(\Re(p) - \Re(q))^2 + (|\Im(p)| - |\Im(q)|)^2}.
\end{cases}
\end{align*}

\end{itemize}

\end{definition}
We recall the sets of convergence of regular Laurent series in $\bH$ that were introduced in \cite{stoppato1, stoppato2}. 
\begin{definition}

Let $d_1, d_2$ be nonnegative real numbers such that $0 \le d_1 < d_2 \le +\infty$. For each $p \in \bH$, let $\Sigma(p; d_1, d_2)$ be the subset of $\bH$ defined by
\begin{align*}
\Sigma(p; d_1, d_2) = \{q \in \bH \; |\; \text{$\tau(q, p) > d_1$ and $\sigma(q, p) < d_2$}\}.
\end{align*}

\end{definition}

The following result is a quaternionic analogue of Laurent expansion for regular functions that was proved in \cite{stoppato1, stoppato2}.
\begin{theorem}
\label{thm-Laurent-expansion}
(see Stoppato \cite{stoppato1, stoppato2})

Let $f$ be a regular function on a domain $D$ of $\bH$, and let $p \in \bH$. Then there exists a sequence of quaternions $(a_n)_{n \in \bZ}$ in $\bH$ such that for all $0 \le d_1 < d_2 \le +\infty$ with $\Sigma(p; d_1, d_2) \subseteq D$, 
\begin{align*}
f(q) = \sum_{n \in \bZ}(q - p)^{\star n} a_n
\end{align*}
for all $q \in \Sigma(p; d_1, d_2)$. Here for each $n \in \bZ$,
\begin{align*}
(q - p)^{\star n} = \underbrace{(q - p)\star (q - p) \star \cdots \star (q - p)}_{\text{$n$ copies}}
\end{align*}

\end{theorem}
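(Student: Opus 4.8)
The plan is to reduce the statement to the classical Laurent theorem on a single complex line and then propagate the expansion to the whole region $\Sigma(p;d_1,d_2)$ by means of the representation formula. First I would fix an element $I \in \bS$ with $p \in \bC_I$ (any $I$ works when $p \in \bR$), and write $p = x_0 + I y_0$ with $y_0 \ge 0$. On $\bC_I$ both $\sigma$ and $\tau$ reduce to the Euclidean distance $|{\cdot} - p|$, so the slice $\Sigma(p;d_1,d_2) \cap \bC_I$ is exactly the annulus $A = \{z \in \bC_I : d_1 < |z - p| < d_2\}$ centred at $p$, and $\Sigma \subseteq D$ forces $A \subseteq D \cap \bC_I$. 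Choosing $J \in \bS$ with $J \perp I$ and applying the Splitting Lemma (Lemma \ref{lem1.3-gss}), I would write $f_I = F + G J$ with $F, G : D \cap \bC_I \to \bC_I$ holomorphic, and expand each of $F, G$ in its classical complex Laurent series on $A$, say $F(z) = \sum_n \alpha_n (z-p)^n$ and $G(z) = \sum_n \beta_n (z-p)^n$ with $\alpha_n, \beta_n \in \bC_I$. Setting $a_n := \alpha_n + \beta_n J$ and using that the coefficients in $\bC_I$ commute with $(z-p)^n$, this gives $f_I(z) = \sum_n (z-p)^n a_n$ on $A$. Because the classical Laurent coefficients are intrinsic (they are the contour integrals of $f_I$ over any circle about $p$ inside $D \cap \bC_I$), the single sequence $(a_n)$ is independent of the pair $(d_1,d_2)$, as the statement requires; this last point is handled by the nestedness of the admissible annuli.

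Next I would promote the ordinary powers to $\star$-powers. Since $p \in \bC_I$, the restriction of $q - p$ to $\bC_I$ is $z - p$ with trivial $J$-component, so by the very definition of the $\star$-product its $\star$-powers restrict to the ordinary powers, $(q-p)^{\star n}$ restricted to $\bC_I$ equals $(z-p)^n$; hence $f$ and $g := \sum_n (q-p)^{\star n} a_n$ have the same restriction to $A$. Each summand $(q-p)^{\star n} a_n$ is regular on the relevant region: for $n \ge 0$ it is a $\star$-power of the regular function $q - p$ multiplied on the right by the constant $a_n$, and right multiplication by a quaternion preserves regularity; for $n < 0$ it is a $\star$-power of the $\star$-inverse of $q - p$, which is regular away from the zero sphere $[\,x_0 + \bS y_0\,]$ of $q - p$, and the condition $\tau(q,p) > d_1$ keeps $\Sigma$ off this sphere.

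The analytic core is the convergence of the two-sided series $g = \sum_{n \in \bZ}(q-p)^{\star n} a_n$ on all of $\Sigma(p;d_1,d_2)$. Here I would establish a norm estimate for $\star$-powers: using the product rule for $\star$-products and the conjugation $q \mapsto f(q)^{-1} q f(q)$ underlying Lemma \ref{lem-zeros-of-product-of-func}, the modulus $|(q-p)^{\star n}|$ is controlled, for $n \ge 0$, by $\sigma(q,p)^{n}$ and, for $n < 0$, by $\tau(q,p)^{n}$. This is precisely why $\sigma$ and $\tau$ are defined as they are: the regular part $\sum_{n \ge 0}$ converges where $\sigma(q,p) < d_2$ and the principal part $\sum_{n < 0}$ converges where $\tau(q,p) > d_1$, whose intersection is $\Sigma(p;d_1,d_2)$. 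Granting these estimates, $g$ converges locally uniformly on $\Sigma$ to a regular function.

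It remains to identify $g$ with $f$ on the full region, for which the representation formula (Theorem \ref{cor-1.16-gss}) is the right tool. Given $q = x + Jy \in \Sigma$ with $J \ne \pm I$, the two points $z_\pm = x \pm Iy$ lie on $\bC_I$, and the elementary inequality $(y - y_0)^2 \le (y + y_0)^2$, combined with the defining inequalities $\tau(q,p) > d_1$ and $\sigma(q,p) < d_2$, forces $d_1 < |z_\pm - p| < d_2$; thus $z_\pm \in A$, and in fact the whole sphere $x + y\bS$ lies in $\Sigma \subseteq D$. Since $f$ and $g$ are both regular near this sphere and already agree on $A$, the representation formula expresses $f(q)$ and $g(q)$ through the same combination of the coinciding values $f(z_\pm) = g(z_\pm)$, whence $f = g$ throughout $\Sigma(p;d_1,d_2)$. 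I expect the main obstacle to be the convergence estimates of the third paragraph: bounding $|(q-p)^{\star n}|$ uniformly on $\Sigma$ in terms of $\sigma$ and $\tau$ requires a careful analysis of the noncommutative $\star$-powers, in contrast to the trivial modulus bound $|z - p|^n$ available in the commutative complex setting.
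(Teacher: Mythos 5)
The paper itself offers no proof of this statement --- it is quoted from Stoppato \cite{stoppato1, stoppato2} --- so your proposal must be judged against what a complete proof requires, and there it has a genuine gap. The reduction to the distinguished slice is fine: the splitting argument works slice-wise on any domain, the classical Laurent expansions of $F$ and $G$ on the annulus $A = \Sigma(p;d_1,d_2)\cap\bC_I$ give $f_I(z)=\sum_n(z-p)^n a_n$, and your verification that the sphere through any off-slice point of $\Sigma(p;d_1,d_2)$ lies entirely in $\Sigma(p;d_1,d_2)$ is correct. The gap is the final identification of $f$ with the sum $g$ off the slice. You invoke the Representation Formula (Theorem \ref{cor-1.16-gss}) because ``$f$ and $g$ are both regular near this sphere,'' but that theorem is stated --- and is only true --- for functions regular on a \emph{symmetric slice domain}, whereas $f$ is merely regular on an arbitrary domain $D$, and a symmetric tubular neighborhood of a sphere inside $D$ is not a slice domain unless it meets $\bR$. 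Regularity near a sphere does not imply the formula: on the symmetric tube $T=\{q\in\bH : (\Re (q) - x)^2+(|\Im (q)|-y)^2<\epsilon^2\}$ with $0<\epsilon<y$, every function $f(u+Kv)=c(K)$ (for $v>0$, with $c:\bS\to\bH$ arbitrary) is regular, being constant on each half-slice, yet it violates the formula whenever $c(K)$ is not of the form $\alpha+K\beta$. Since $g$ does satisfy the formula (each term does), your argument shows only that $g(q)$ equals the representation-formula combination of $f(z_\pm)$; it cannot conclude that this equals $f(q)$. When $p\in\bR$ the shell is itself a symmetric slice domain and your step can be repaired, but for $p\notin\bR$ closing the gap is exactly the content of Stoppato's proof: one uses the special geometry of the shells --- $\Sigma(p;d_1,d_2)$ contains points off $\bC_I$ only when $d_2>|\Im (p)|$, in which case it also contains an open set of real points --- to show that every sphere $[x+y\bS]\subset\Sigma(p;d_1,d_2)$ lies inside a symmetric slice domain contained in $\Sigma(p;d_1,d_2)$, on which the Identity Principle (Theorem \ref{thm1.12-gss}) applied to $f$ and $g$ finishes the argument. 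This structural analysis is entirely absent from your proposal.

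Two further problems. Your claim that the coefficients are independent of $(d_1,d_2)$ ``by the nestedness of the admissible annuli'' is false: two admissible shells with the same center need not be nested, and when they are disjoint the Laurent data genuinely differ. For instance $f(q)=(q-3)^{-1}$ is regular on $D=\bH\setminus\{3\}$, and its expansions at $p=0$ on $\Sigma(0;1,2)$ (only $n\ge 0$ terms) and on $\Sigma(0;4,5)$ (only $n<0$ terms) share no common coefficient sequence; so no argument can produce a single sequence valid for all admissible pairs, and the statement must be read for nested families of shells (as it is used in this paper, always with $d_1=0$) or with coefficients depending on the shell. Finally, the bounds on $|(q-p)^{\star n}|$ in terms of $\sigma$ and $\tau$, which you explicitly defer, are the analytic core of the convergence theory; as written, the proposal establishes the expansion only on the slice $\bC_I$.
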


The series in the above theorem is called the \textit{Laurent expansion of $f$ at $p$}. The real  number $d_1$ is called the \textit{inner radius of convergence}, and the 
real number $d_2$ is called the \textit{outer radius of convergence}. 

Let $f$ be a regular function on a symmetric slice domain $D$. A quaternion $p \in \bH$ is said to be a \textit{singularity of $f$} if there exists a positive real number $d > 0$ such that $\Sigma(p; 0, d)$. In this case, $f$ admits the Laurent expansion of $f$ at $p$ of the form
\begin{align*}
f(q) = \sum_{n \in \bZ}(q - p)^{\star n} a_n
\end{align*}
for all $q \in \Sigma(p; 0, d)$, where the $a_i$ are certain quaternions. Thus $0$ is the inner radius of convergence, and $d$ is the outer radius of convergence.

In analogy with complex function theory, there are two types of singularities. A quaternion $p$ is a \textit{removable singularity} of $f$ if there exists a regular extension of $f$ to a neighborhood of $p$. Otherwise, $p$ is called a \textit{nonremovable singularity}. 

For a nonremovable singularity $p$ of $f$, Theorem \ref{thm-Laurent-expansion} implies that $f$ admits the Laurent expansion at $p$ of the form
\begin{align*}
f(q) = \sum_{n \in \bZ}(q - p)^{\star n} a_n
\end{align*}
for all $q \in \Sigma(p; 0, d)$, where the $a_n$ are quaternions in $\bH$. If there exists an integer $\ell \ge 0$ such that $a_{-n} = 0$ for all $n > \ell$, we call $p$ a \textit{pole of $f$}. By the well-ordering principle (see Lang \cite{lang}), there exists the \textit{smallest} such integer $\ell$ which we call the \textit{order of $p$}, and denote it by $\ord_f(p)$. If $p$ is not a pole of $f$, it is said to be an \textit{essential singularity of $f$}, and set $\ord_f(p) = +\infty$.

We recall an analogue of meromorphic functions in the quaternionic setting (see \cite{stoppato, stoppato1, stoppato2, GSS}).

\begin{definition}
\label{def-semi-regular}
(semiregular functions)

Let $f : D \to \bH$ be a function on a symmetric slice domain $D$. The function $f$ is said to be \textbf{semiregular on $D$} if there exists a subset $A$ of $D$ that is also a symmetric slice domain such that the following are true.
\begin{itemize}

\item [(i)] $f$ is regular on $A$; 

\item [(ii)] every point in $D \setminus A$ is either a pole or a removable singularity of $f$.

\end{itemize}

Throughout the paper, we denote by $\cM(D)$ the set of all semiregular functions on a symmetric slice domain $D$.

\end{definition}

The following result will be useful in subsequent sections.

\begin{theorem}
\label{cor5.27-gss}
(See \cite{stoppato, stoppato1, stoppato2} or \cite[Corollary 5.27]{GSS})

Let $f$ be a semiregular function on a symmetric slice domain $D$. Let $p = x + yI$ be a point in $D$ for some $x, y \in \bR$ and $I \in \bS$. Let $m = \ord_f(p)$ and $n = \ord_f(\bar{p})$. Without loss of generality, assume $m \le n$. Then there exists a unique semiregular function $g$ on $D$ such that the following are true.
\begin{itemize}

\item [(i)] $g$ has no poles in $[x + \bS y]$; and

\item [(ii)] $f(q) = ((q - x)^2 + y^2)^{-n}(q - p)^{\star (n - m)}\star g(q)$.

\end{itemize}

\end{theorem}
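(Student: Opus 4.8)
The plan is to reduce the statement to elementary complex meromorphic factorization on a single complex line and then transport the result back to $\bH$ via the extension lemma and the Identity Principle. Fix the line $\bC_I$ containing both $p = x + yI$ and $\bar{p} = x - yI$, and assume $y \ne 0$ (the degenerate case $y = 0$, where the sphere collapses to the real point $x$ and $m = n$, follows directly from Lemma \ref{lem-spherical-multiplicity}). The key observation is that the slice preserving factor $a(q) := (q - x)^2 + y^2$ restricts on $\bC_I$ to $a_I(z) = (z - p)(z - \bar{p})$, while for $p \in \bC_I$ the restriction of the $\star$-power satisfies $((q-p)^{\star k})_I(z) = (z - p)^k$ (its splitting component along a $J \perp I$ vanishes, so $\star$ collapses to the ordinary product). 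Since $a^{-n}$ is slice preserving and therefore commutes with every $\star$-factor by Lemma \ref{lem-commutative-O_s}, restricting the desired identity (ii) to $\bC_I$ yields the purely complex-analytic identity $f_I(z) = (z - p)^{-m}(z - \bar{p})^{-n} g_I(z)$.

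First I would construct $g$. The restriction $f_I$ is a meromorphic $\bH$-valued function on $D \cap \bC_I$ whose Laurent expansion at $p$ (respectively $\bar{p}$) has lowest term $(z - p)^{-m}$ (respectively $(z - \bar{p})^{-n}$) by definition of $\ord_f$. Hence $g_I(z) := (z - p)^m (z - \bar{p})^n f_I(z)$ is holomorphic at both $p$ and $\bar{p}$, and I set $g := \text{ext}(g_I)$ using Lemma \ref{lem-extension}. Away from $[x + y\bS]$ the function $g$ inherits the poles of $f$, so $g$ is semiregular on $D$; it remains to see that $g$ has no poles anywhere on the sphere, not merely at $p$ and $\bar{p}$. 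This is where Theorem \ref{cor-1.16-gss} enters: the values of $g$ on the whole sphere $[x + y\bS]$ are determined by $g(p)$ and $g(\bar{p})$, so regularity of $g_I$ at the two conjugate points forces $g$ to be regular on a neighborhood of the entire sphere, establishing (i).

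Next I would verify (ii) globally. Writing $R(q) := a(q)^{-n}(q - p)^{\star(n-m)} \star g(q)$, the computation above shows $R_I = f_I$ on $D \cap \bC_I$. Multiplying through by the slice preserving function $a^n$ turns both $a^n R = (q-p)^{\star(n-m)}\star g$ and $a^n f$ into functions that are regular on a neighborhood of the sphere and agree on the open set $D \cap \bC_I$; the Identity Principle (Theorem \ref{thm1.12-gss}) then forces $a^n R = a^n f$, hence $R = f$, first near the sphere and then on all of $D$ since both are semiregular and agree on a set with an accumulation point. Uniqueness is obtained the same way: if $g'$ also satisfies (i) and (ii), then $(q-p)^{\star(n-m)}\star(g - g') = 0$, whose restriction to $\bC_I$ is $(z-p)^{n-m}(g_I - g_I') = 0$; since $(z - p)^{n-m}$ is not identically zero this gives $g_I = g_I'$, and hence $g = g'$ by the Identity Principle.

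The main obstacle is bookkeeping the interaction between the noncommutative $\star$-product and the restriction to $\bC_I$: one must check carefully that $((q-p)^{\star k})_I = (z-p)^k$ and that slice preserving factors commute out, so that the genuinely quaternionic identity (ii) collapses to the scalar complex identity, and conversely that agreement on one line propagates back to all of $\bH$. The extension lemma together with Theorem \ref{cor-1.16-gss} is precisely what licenses this propagation and, crucially, guarantees that clearing the poles at the two points $p$ and $\bar{p}$ clears them across the entire sphere.
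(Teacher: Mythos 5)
The paper itself contains no proof of Theorem \ref{cor5.27-gss}: it is recalled from the literature (\cite{stoppato, stoppato1, stoppato2} and \cite[Corollary 5.27]{GSS}), so there is no in-paper argument to compare yours against. Judged on its own merits, your proof is correct and proceeds in the same spirit as the cited sources: restrict to the distinguished slice $\bC_I$ containing $p$ and $\bar{p}$, do classical one-variable pole clearing there, and propagate back to $D$ via the extension lemma (Lemma \ref{lem-extension}) and the Identity Principle (Theorem \ref{thm1.12-gss}). The two mechanisms you single out are exactly the right ones, and both are valid: since $p \in \bC_I$, the restriction of $(q-p)^{\star k}$ to $\bC_I$ is $\bC_I$-valued and equals $(z-p)^k$, so in the splitting of Lemma \ref{lem1.3-gss} its $J$-component vanishes and the $\star$-product against such a left factor collapses to the pointwise product on that slice; and extending $g_I$ from a conjugation-symmetric neighbourhood of $\{p,\bar{p}\}$ yields a function regular on a symmetric neighbourhood of the whole sphere, which gives (i). Three points should be made explicit to close small gaps. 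First, you implicitly identify $\ord_f(p)$, defined through the quaternionic Laurent expansion in $\star$-powers, with the classical pole order of the restriction $f_I$ at $p$; this is true because $((q-p)^{\star k})_I(z) = (z-p)^k$ also holds for negative $k$ (the $\star$-inverse of $q-p$ is $((q-x)^2+y^2)^{-1}(q-\bar{p})$, whose restriction to $\bC_I$ is $(z-p)^{-1}$), but it needs to be said, as it is what licenses the definition of $g_I$. Second, semiregularity of $g=\text{ext}(g_I)$ on all of $D$ is asserted rather than proved; the quickest repair is to note that, once the identity $R=f$ is established, $g$ coincides with the $\star$-product of the $\star$-inverse of $(q-p)^{\star(n-m)}$ with $((q-x)^2+y^2)^n f$, an element of the division ring $\cM(D)$, hence semiregular on $D$. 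Third, Lemma \ref{lem-commutative-O_s} is stated for regular functions, while you invoke it for the semiregular factor $((q-x)^2+y^2)^{-n}$; the extension to slice preserving semiregular factors follows by arguing on the complement of the pole set and applying the Identity Principle, as the paper itself does elsewhere. None of these affects the soundness of the overall argument.
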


\section{Divisors and principal divisors}
\label{sec-divisors}

Let $D$ be a symmetric slice domain in $\bH$. One can represent $D$ in the form
\begin{align}
\label{eqn-D-rep}
D = \cup_{i \in \cI} \bS_i,
\end{align}
where for each $i \in \cI$, $\bS_i =[ x_i + y_i\bS]$ for some $x_i, y_i \in \bR$ such that $x_i + y_iI_i \in D$ for some $I_i \in \bS$, and $\bS_i \cap \bS_j = \emptyset$. We introduce a notion of a spherical divisor on a symmetric slice domain which can be viewed as a quaternionic analogue of the classicial notion of divisors on domains in the complex plane $\bC$ (see, for example, \cite[p.74]{rem2}). This notion will play a main role in the proof of a quaternionic analogue of Iss'sa's theorem that we will prove in the last section.

\begin{definition}
\label{def-spherical-divisor}

Let $D$ be a symmetric slice domain in $\bH$ that is represented in the form (\ref{eqn-D-rep}). A map $\fd : D \rightarrow \bZ$ is a spherical divisor on $D$ if the following are satisfied:
\begin{itemize}

\item[(i)] the restriction of $\fd$ to each $\bS_i$ is a constant;

\item [(ii)] for each $q \in D$ such that $\fd(q) \ne 0$, there exists a symmetric slice neighborhood of $z$, say $U_z \subset D$ such that there are only finitely many $\bS_i$ such that the restriction of $\fd$ to $U_z \cap \bS_i$ is nonzero.

\end{itemize}

\end{definition}

In view of condition (i), by abuse of notation, one can write $\fd([x_i + y_i\bS]) = \fd(x_i + y_i I)$ for any $I \in \bS$. 

Let $\Div(D)$ be the set of all spherical divisors on $D$. In view of condition (i) above, one can, without loss of generality, write a spherical divisor $\fd : D \rightarrow \bZ$ formally in the form
\begin{align*}
\fd = \sum_{i \in \cI} \fd([x_i + y_i\bS]) \cdot [x_i + y_i \bS],
\end{align*}
where 
\begin{align*}
D = \cup_{i \in \cI} [x_i + y_i\bS]
\end{align*}
and $[x_i + y_i \bS] \subset D$ for each $i \in \cI$. 

The set $\Div(D)$ becomes an abelian group with respect to natural addition between maps from $D$ to $\bZ$. The trivial map which sends every element in $D$ to $0$ is the identity element in the group $\Div(D)$.

For a symmetric slice domain $D$ in $\bH$, recall from Section \ref{sec-notions} that $\cO_s(D)$ denotes the set of all regular functions $f$ on $D$ that are also slice preserving, that is, $f$ is regular on $D$ such that $f(D \cap \bC_I) \subseteq \bC_I$ for all $I \in \bS$.

The next result is immediate from Lemma \ref{lem-commutative-O_s}.

\begin{corollary}
\label{cor-comm-group-O_s}

Let $f, g$ be slice preserving regular functions on a symmetric slice domain $D$. Then $fg$ is regular on $D$ and $f \star g = g \star f = fg = gf$, that is, $\star$-product coincides with the usual multiplication in $\cO_s(D)$.

\end{corollary}

The above corollary implies that $\cO_s(D)$ is a commutative ring with respect to the usual addition ``$+$" and usual multiplication ``$\cdot$". In addition, the $\star$-product coincides with the usual multiplication in $\cO_s(D)$, and so for the rest of our paper, whenever we deal with only elements in the same ring $\cO_s(D)$, we only use the usual multiplication. On the other hand, suppose that $fg$ is identically to zero on $D$ for some $f, g \in \cO_s(D)$. Thus $f_Ig_I \equiv 0$ on $D_I$ for all $I \in \bS$. Since $f, g \in \cO_s(D)$, $f_I, g_I : D \cap \bC_I \to \bC_I$ are holomorphic functions in $\bC_I$ such that $f_Ig_I \equiv 0$. Since the set of holomorphic functions in $\bC_I$ is an integral domain (see \cite{rem1, rem2}), $f_I \equiv 0$ or $g_I \equiv 0$. By the Identity Principle (see Theorem \ref{thm1.12-gss}), $f \equiv 0$ or $g \equiv 0$, which proves that $\cO_s(D)$ is an integral domain (see Lang \cite{lang} for a notion of integral domain). We record this simple fact in the following.

\begin{proposition}
\label{prop-O_s-commutative-ring}

$\cO_s(D)$ is an integral domain with respect to the usual addition ``$+$" and the usual multiplication ``$\cdot $". Furthermore in $\cO_s(D)$, the multiplication ``$\cdot$" concides with the $\star$-product.

\end{proposition}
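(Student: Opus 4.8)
The plan is to prove the two assertions of Proposition~\ref{prop-O_s-commutative-ring} in turn: first that $\cO_s(D)$ is a commutative ring under $+$ and $\cdot$, second that it has no zero divisors, and along the way to record that $\cdot$ agrees with $\star$. The central tool throughout is the slice-by-slice reduction, which turns each claim about elements of $\cO_s(D)$ into a claim about holomorphic functions on the planar domain $D \cap \bC_I$, where classical complex function theory applies.

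First I would establish that $\cO_s(D)$ is closed under the usual multiplication and that $\cdot$ coincides with $\star$; this is precisely the content of Corollary~\ref{cor-comm-group-O_s} (which in turn rests on Lemma~\ref{lem-commutative-O_s}), so for $f,g \in \cO_s(D)$ we have $fg = f \star g = g \star f = gf$ regular on $D$, and I would note that the product is again slice preserving since for each $I \in \bS$ the restriction $(fg)_I = f_I g_I$ maps $D \cap \bC_I$ into $\bC_I$ (a product of $\bC_I$-valued holomorphic functions stays in $\bC_I$). Closure under $+$ and the ring axioms (associativity, distributivity, commutativity of multiplication) are then inherited pointwise from $\bH$, with commutativity of $\cdot$ guaranteed by $fg = gf$ above; the constant function $1$ is the identity. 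This gives the commutative ring structure essentially for free.

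The substantive step is the integral-domain property, and this is exactly the argument sketched in the paragraph preceding the statement, which I would carry out cleanly. Suppose $f, g \in \cO_s(D)$ with $fg \equiv 0$ on $D$. For a fixed $I \in \bS$, restricting to $D \cap \bC_I$ gives $f_I g_I \equiv 0$, where $f_I, g_I : D \cap \bC_I \to \bC_I$ are holomorphic. Since the ring of holomorphic functions on a domain in a complex line is an integral domain, either $f_I \equiv 0$ or $g_I \equiv 0$ on $D \cap \bC_I$. Say $f_I \equiv 0$; then the zero set of $f$ contains $D \cap \bC_I$, which certainly has an accumulation point in $D \cap \bC_I$, so the Identity Principle (Theorem~\ref{thm1.12-gss}, comparing $f$ with the zero function) forces $f \equiv 0$ on all of $D$. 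This completes the proof.

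The one point requiring genuine care — the main obstacle, such as it is — is the application of the complex integral-domain fact: one must know that $D \cap \bC_I$ is connected, i.e., genuinely a domain in $\bC_I$, so that a holomorphic function vanishing on it (or a product vanishing identically) behaves as in the single-variable theory. This is guaranteed because $D$ is a slice domain, so $D \cap \bC_I$ is a domain in $\bC_I$ by definition, and the classical result that $\cO(\Omega)$ is an integral domain for a connected open $\Omega$ then applies verbatim. I would state this connectivity explicitly when invoking it, so that the reduction to the complex case is rigorous rather than merely formal.
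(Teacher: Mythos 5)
Your proposal is correct and follows essentially the same route as the paper: closure, commutativity, and the identification $\cdot = \star$ via Corollary~\ref{cor-comm-group-O_s}, then the zero-divisor argument by restricting to a slice $D \cap \bC_I$, invoking that holomorphic functions on a planar domain form an integral domain, and concluding with the Identity Principle (Theorem~\ref{thm1.12-gss}). Your explicit remark that $D \cap \bC_I$ is connected because $D$ is a slice domain is a point the paper leaves implicit, but it is the same argument.
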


Recall from \cite[Proposition 5.26]{GSS} that the set $\cM(D)$ of all semiregular functions on $D$ is a division ring with respect to ``$+$" and the $\star$-product. For each semiregular function $f$ on a symmetric slice domain $D$, we denote by $\cZ(f)$ the zero set of $f$ in $D$, and $\cP(f)$ the set consisting of poles or removable singularities of $f$ in $D$. 

We recall the following results concerned with slice preserving regular functions.

\begin{lemma}
\label{lem-inverse-of-reg-func-in-O_s}
(see \cite[Lemma 5.2]{GSS})

Let $f$ be a slice preserving regular function on a symmetric slice domain $D$ such that $f \not\equiv 0$. Then $\dfrac{1}{f}$ is a slice preserving regular function on $D\setminus \cZ(f)$.

\end{lemma}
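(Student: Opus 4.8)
The plan is to prove Lemma \ref{lem-inverse-of-reg-func-in-O_s} by reducing the quaternionic statement to the classical complex case on each slice, using the slice-preserving hypothesis in an essential way. First I would observe that since $f$ is slice preserving, for each $I \in \bS$ the restriction $f_I : D \cap \bC_I \to \bC_I$ is an ordinary holomorphic function on the planar domain $D \cap \bC_I$ (identified with an open subset of $\bC$). Because $f \not\equiv 0$ and $D$ is a slice domain, the Identity Principle (Theorem \ref{thm1.12-gss}) guarantees that $f_I \not\equiv 0$ on $D \cap \bC_I$; hence the zero set $\cZ(f) \cap \bC_I$ is a discrete subset of $D \cap \bC_I$ with no accumulation point inside the domain. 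Therefore $\frac{1}{f_I}$ is a genuine holomorphic function on $(D \cap \bC_I) \setminus \cZ(f)$ by elementary one-variable complex analysis, and it clearly takes values in $\bC_I$ since $f_I$ does.

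Next I would assemble these slicewise reciprocals into a regular function on the quaternionic domain $D \setminus \cZ(f)$. The key point is that $D \setminus \cZ(f)$ is again a symmetric slice domain: symmetry follows because $\cZ(f)$ is itself symmetric (if $f$ vanishes at $x + yI$ then, by Theorem \ref{cor-1.16-gss} together with the slice-preserving property, one controls the values of $f$ on the whole sphere $[x + y\bS]$, so the zero set is a union of spheres), and the sliced-off set is discrete on each $\bC_I$ so the complement remains a slice domain. On this symmetric slice domain the function $g$ defined slicewise by $g_I = \frac{1}{f_I}$ satisfies the Cauchy--Riemann--type equation $\bar\partial_I g = 0$ on each slice because the pointwise reciprocal of a nonvanishing holomorphic function is holomorphic; hence $g$ is regular. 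That $g$ is slice preserving is immediate since each $g_I$ maps $D \cap \bC_I$ into $\bC_I$. Finally, the relation $f g = g f = 1$ holds on each slice by construction and hence globally by the Identity Principle, so $g = \frac{1}{f}$ in the sense of the ring structure, confirming $\frac{1}{f} \in \cO_s(D \setminus \cZ(f))$.

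The step I expect to be the main obstacle is verifying carefully that $D \setminus \cZ(f)$ is a bona fide symmetric slice domain, rather than merely an open set. Removing points can disconnect a planar slice or can fail to remove whole spheres, so I must confirm two things: that $\cZ(f)$ is symmetric (a union of spheres $[x + y\bS]$), which relies on the slice-preserving hypothesis via Theorem \ref{cor-1.16-gss} and Lemma \ref{lem-spherical-multiplicity}, and that each punctured slice $(D \cap \bC_I) \setminus \cZ(f)$ remains connected and nonempty so that the result is still a slice domain in the sense of the paper. Since $\cZ(f)$ meets each slice in a discrete set, connectivity of the complement of a discrete set in a planar domain is standard, but the symmetry requirement is where the slice-preserving assumption does real work. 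Once these domain-theoretic points are settled, the regularity of $g$ and the identity $fg = 1$ follow routinely from the one-variable theory combined with the extension and identity results already available in the excerpt.
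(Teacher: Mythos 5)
The paper gives no argument for this lemma at all; it simply quotes it from \cite[Lemma 5.2]{GSS}. Your slicewise reduction is therefore a self-contained alternative, and in substance it is correct: since $f$ is slice preserving, each restriction $f_I$ is an honest $\bC_I$-valued holomorphic function on the planar domain $D \cap \bC_I$; the Identity Principle (Theorem \ref{thm1.12-gss}) rules out $f_I \equiv 0$, so $\cZ(f) \cap \bC_I$ is discrete and relatively closed, $1/f_I$ is holomorphic and $\bC_I$-valued off it, and these reciprocals are automatically consistent because they all equal the pointwise inverse $q \mapsto f(q)^{-1}$; regularity of $1/f$ is then precisely the statement that $\bar{\partial}_I(1/f) = 0$ on every slice, and the slice-preserving property is built in. Your insistence on verifying that $D \setminus \cZ(f)$ is again a symmetric slice domain (openness, connectivity of each punctured slice, nonemptiness of the real trace, symmetry of $\cZ(f)$) is exactly the right list of domain-theoretic checks, and all of them go through: a discrete closed set cannot disconnect a planar domain nor contain the nonempty open set $D \cap \bR$. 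What your route buys over the paper's bare citation is a proof from first principles using only the tools already quoted in the paper; what it costs is that you must supply the symmetry of the zero set yourself.

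That symmetry step is the one place your write-up needs tightening. The Representation Formula (Theorem \ref{cor-1.16-gss}) alone does not propagate a single zero around its sphere: if $f(x+yI) = 0$, the formula only gives $f(x+yJ) = \frac{1}{2}(1+JI)\,f(x-yI)$, so you still need $f(x-yI) = 0$ before concluding that $[x+y\bS] \subset \cZ(f)$. That missing input is the reflection property $f_I(\bar z) = \overline{f_I(z)}$, which holds because a slice preserving $f$ maps $D \cap \bR \subset \bigcap_{I \in \bS} \bC_I$ into $\bigcap_{I \in \bS} \bC_I = \bR$, so $f_I$ is real on the real trace and the identity theorem applied to $z \mapsto \overline{f_I(\bar z)}$ gives the reflection identity on the conjugation-invariant domain $D \cap \bC_I$. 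Alternatively, and most cleanly, simply invoke the paper's own Lemma \ref{lem-zeros-of-slice-preserving-regular-func} (that is, \cite[Lemma 3.3]{GSS}), which states exactly that zeros of slice preserving regular functions fill whole spheres; note that Lemma \ref{lem-spherical-multiplicity}, which you also cite for this point, is a factorization statement and is not the relevant tool here. With that substitution the argument is complete.
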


\begin{lemma}
\label{lem-zeros-of-slice-preserving-regular-func}
(see \cite[Lemma 3.3]{GSS})

Let $f$ be a slice preserving regular function on a symmetric slice domain $D$. If $f(x + yI) = 0$ for some $x, y \in \bR$ and $I \in \bS$, then $f(x + yJ) = 0$ for all $J \in \bS$, that is, $f$ vanishes identically on $[x + y\bS]$.

\end{lemma}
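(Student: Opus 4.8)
The plan is to prove that if $f$ is a slice preserving regular function on a symmetric slice domain $D$ and $f(x+yI) = 0$ for some $x, y \in \bR$ and $I \in \bS$, then $f$ vanishes on the entire sphere $[x+y\bS]$. First I would dispose of the trivial case $y = 0$, where $[x+y\bS] = \{x\}$ is a single real point and there is nothing to prove. So I assume $y \neq 0$, so that $[x+y\bS]$ is a genuine two-sphere.

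The key tool I would use is the representation formula for regular functions on symmetric slice domains, namely Theorem \ref{cor-1.16-gss}, which expresses the value of $f$ at any point $x+yJ$ of the sphere in terms of its two values $f(x+yI)$ and $f(x-yI)$ in the single complex line $\bC_I$:
\begin{align*}
f(x + yJ) = \dfrac{1}{2}\bigl(f(x + yI) + f(x - yI)\bigr) + \dfrac{JI}{2}\bigl(f(x - yI) - f(x + yI)\bigr).
\end{align*}
The plan is to show that both $f(x+yI)$ and $f(x-yI)$ vanish, after which the formula forces $f(x+yJ) = 0$ for every $J \in \bS$. We are given $f(x+yI) = 0$ directly. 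The remaining point is to show $f(x-yI) = 0$, i.e. that $f$ vanishes at the conjugate point $\overline{x+yI} = x - yI$ in the same complex line $\bC_I$.

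The main step, and the place where the \emph{slice preserving} hypothesis does its work, is establishing $f(x-yI) = 0$. Since $f$ is slice preserving, the restriction $f_I : D \cap \bC_I \to \bC_I$ is a holomorphic function taking values in $\bC_I \cong \bC$, and I would exploit the fact that on a single complex line a slice preserving function behaves like a genuine complex-analytic function that commutes with conjugation. Concretely, writing $z = x + yI$, one has $\bar z = x - yI$, and the slice preserving property together with the conjugation symmetry of $f$ (which follows from the representation formula applied with $J = -I$, or equivalently from the reality of the power-series / Laurent coefficients of a slice preserving function) gives $f_I(\bar z) = \overline{f_I(z)}$. Hence $f(x-yI) = f_I(\bar z) = \overline{f_I(z)} = \overline{f(x+yI)} = \bar 0 = 0$.

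The hard part is justifying the conjugation identity $f(\bar z) = \overline{f(z)}$ cleanly; I would derive it from Theorem \ref{cor-1.16-gss} itself by setting $J = -I$, which yields $f(x - yI) = \tfrac{1}{2}(f(x+yI)+f(x-yI)) - \tfrac{1}{2}(f(x-yI)-f(x+yI)) = f(x+yI)$ — this is a consistency check rather than the identity I need, so instead the more robust route is to invoke that for a slice preserving function the holomorphic restriction $f_I$ maps $\bC_I$ into $\bC_I$ and respects the real structure, so its zero at $z$ is matched by a zero at $\bar z$. Once both $f(x+yI) = 0$ and $f(x-yI) = 0$ are in hand, substituting into the representation formula of Theorem \ref{cor-1.16-gss} makes every term vanish, giving $f(x+yJ) = 0$ for all $J \in \bS$, which is exactly the assertion that $f$ vanishes identically on $[x+y\bS]$. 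I expect the only genuine subtlety to be the careful bookkeeping of why slice preservation forces the second value $f(x-yI)$ to vanish; everything after that is immediate from the representation formula.
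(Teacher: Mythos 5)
The paper never proves this lemma at all: it is quoted from \cite[Lemma 3.3]{GSS}, so there is no in-paper argument to compare against and your proposal has to stand on its own. Its architecture is the right one — reduce everything to showing $f(x-yI)=0$, then substitute into the representation formula of Theorem \ref{cor-1.16-gss} — but the crux of the argument, the conjugation identity $f_I(\bar z)=\overline{f_I(z)}$, is asserted rather than proved. Saying that $f_I$ ``respects the real structure'' is a restatement of exactly what needs to be shown, and you yourself observe that the $J=-I$ substitution in the representation formula collapses to a tautology, so as written the proof has a genuine (though very fillable) hole at its only nontrivial step.

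Here is how to close it with the tools already in the paper. First, $f$ is real-valued on $D\cap\bR$, which is nonempty because $D$ is a slice domain: for real $x$, slice preservation gives $f(x)\in\bC_I$ for \emph{every} $I\in\bS$, and $\bigcap_{I\in\bS}\bC_I=\bR$. Second, $D\cap\bC_I$ is a domain (definition of slice domain) that is symmetric under conjugation, since $x+yI\in D$ forces $x+y(-I)\in[x+y\bS]\subset D$ by symmetry of $D$. Hence $g(z):=\overline{f_I(\bar z)}$ is holomorphic on $D\cap\bC_I$, takes values in $\bC_I$, and agrees with $f_I$ on the nonempty open real set $D\cap\bR$; the classical identity theorem in the slice $\bC_I\cong\bC$ then gives $g=f_I$, i.e.\ $f_I(\bar z)=\overline{f_I(z)}$, and your conclusion follows. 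Alternatively, you can avoid reflection entirely: with $a=f(x+yI)=0$ and $b=f(x-yI)\in\bC_I$, the representation formula gives $f(x+yJ)=\tfrac12(1+JI)b$. Choose $J\perp I$ and set $K=IJ$ as in Proposition \ref{prop-inner-cross-prod-relations}; writing $b=b_0+b_1I$ with $b_0,b_1\in\bR$, one computes $\tfrac12(1+JI)b=\tfrac12\left(b_0+b_1I-b_1J-b_0K\right)$, and since slice preservation forces $f(x+yJ)\in\bC_J$ while $1,I,J,K$ is a basis of $\bH$, the $I$- and $K$-components must vanish, giving $b_0=b_1=0$, i.e.\ $f(x-yI)=0$. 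Either repair yields the lemma; the second has the advantage of using only facts the paper has already stated.
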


\begin{lemma}
\label{lem-Zorn-for-zero}

Let $f$ be a slice preserving regular function on a symmetric slice domain $D$ such that $f \not\equiv 0$, and let $p = x + yI$ for some $x, y \in \bR$ and $I \in \bS$. Then
\begin{itemize}

\item [(i)] If $y \ne 0$, then there exists a largest integer $\ell$ and a slice preserving regular function $g : D \to \bH$ such that $g$ has no zeros in $[x + \bS y]$ and 
\begin{align*}
f(q) = ((q - x)^2 + y^2)^{\ell} g(q)
\end{align*}
for all $q \in D$.

\item [(ii)] If $y = 0$,  then there exists a largest integer $\ell$ and a slice preserving regular function $g : D \to \bH$ such that $g(x) \ne 0$ and 
\begin{align*}
f(q) = (q - x)^{\ell} g(q)
\end{align*}
for all $q \in D$.

\end{itemize}

\end{lemma}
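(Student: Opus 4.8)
The plan is to reduce both parts of the lemma to the classical one-variable factorization of holomorphic functions on the complex line $\bC_I$, and then transport the result back to all of $D$ via the Identity Principle (Theorem~\ref{thm1.12-gss}) and the extension mechanism of Lemma~\ref{lem-extension}. Since $f$ is slice preserving and $f \not\equiv 0$, its restriction $f_I : D \cap \bC_I \to \bC_I$ is a holomorphic function on the planar domain $D \cap \bC_I$ that is not identically zero. The existence of a largest order of vanishing then follows from the standard complex-analytic fact that the zeros of a nonzero holomorphic function are isolated with finite multiplicities.

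For part (ii) I would first fix an $I \in \bS$ and let $\ell \ge 0$ be the order of the zero of the holomorphic function $f_I$ at the real point $x$ (taking $\ell = 0$ if $f_I(x) \ne 0$). Classical complex analysis gives a holomorphic $h : D \cap \bC_I \to \bC_I$ with $h(x) \ne 0$ and $f_I(z) = (z - x)^\ell h(z)$ on $D \cap \bC_I$. I would then define $g := \operatorname{ext}(h)$ using Lemma~\ref{lem-extension}; since $(q-x)^\ell$ is itself slice preserving and real-centered, Corollary~\ref{cor-comm-group-O_s} lets me multiply in the usual (commutative) way, and by the Identity Principle the regular function $(q-x)^\ell g(q)$ agrees with $f$ on $D \cap \bC_I$ and hence equals $f$ on all of $D$. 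The condition $g(x) \ne 0$ transfers from $h(x) \ne 0$, and maximality of $\ell$ follows because any larger exponent would force $h(x) = 0$, contradicting the choice of $\ell$ as the exact multiplicity.

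For part (i), where $y \ne 0$, the point $p = x + yI$ is a genuine sphere $[x + y\bS]$, so the relevant factor is the quadratic $(q-x)^2 + y^2$, which vanishes precisely on that sphere and is slice preserving. By Lemma~\ref{lem-zeros-of-slice-preserving-regular-func}, $f$ vanishes on the entire sphere as soon as it vanishes at one point of it, so the restriction $f_I$ has zeros at both $x + yI$ and $x - yI$ of some common multiplicity $\ell$; concretely $(z-x)^2 + y^2 = (z - (x+yI))(z-(x-yI))$ on $\bC_I$, and I would extract the largest $\ell$ for which $((z-x)^2+y^2)^\ell$ divides $f_I$ in the ring of holomorphic functions on $D \cap \bC_I$, producing a holomorphic quotient $h$ with no zeros on $\{x \pm yI\}$. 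Setting $g := \operatorname{ext}(h)$ and arguing exactly as before via the Identity Principle yields the factorization $f(q) = ((q-x)^2+y^2)^\ell g(q)$ on $D$, with $g$ having no zeros on $[x+y\bS]$ (using Lemma~\ref{lem-zeros-of-slice-preserving-regular-func} in reverse to rule out zeros elsewhere on the sphere).

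The main obstacle I anticipate is not the planar factorization itself, which is routine, but verifying that the extended function $g = \operatorname{ext}(h)$ is again \emph{slice preserving} and that the factorization computed on a single slice $\bC_I$ genuinely propagates to every slice. The slice-preserving property should follow because $g$ coincides on $\bC_I$ with a holomorphic $\bC_I$-valued function and the extension formula of Lemma~\ref{lem-extension} respects the real structure (the coefficients in the symmetrization are real-linear combinations preserving each $\bC_J$); I would need to check this carefully, perhaps by noting that $f$ and the explicit factor $((q-x)^2+y^2)^\ell$ (resp.\ $(q-x)^\ell$) are both slice preserving, so their quotient, wherever defined and regular, inherits the property by Lemma~\ref{lem-inverse-of-reg-func-in-O_s} applied on $D \setminus [x+y\bS]$ combined with removability across the sphere. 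Ensuring maximality of $\ell$ globally (rather than just on $\bC_I$) is then automatic, since the order of vanishing is a slice-independent invariant of the sphere by the symmetry of $f$.
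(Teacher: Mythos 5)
Your strategy---carry out the factorization classically on a single slice $\bC_I$ and transport it back to $D$ via $\mathrm{ext}$ and the Identity Principle---is genuinely different from the paper's, which stays at the quaternionic level throughout: the paper factors out one copy of $(q-x)^2+y^2$ at a time using Lemma \ref{lem-spherical-multiplicity}(ii) together with Lemma \ref{lem-zeros-of-slice-preserving-regular-func}, and uses the restriction $f_I$ only to show that the iteration terminates (infinitely many factorizations would give $f_I$ a zero of infinite order at $x+Iy$, hence $f\equiv 0$). Your part (ii), your argument that $g$ is slice preserving (quotient of slice preserving functions off the sphere, then continuity), and your maximality remark are all sound. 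But part (i) has a gap: you assert that $f_I$ vanishes at $x+yI$ and at $x-yI$ ``of some common multiplicity.'' Lemma \ref{lem-zeros-of-slice-preserving-regular-func} only gives vanishing at both points; it says nothing about multiplicities, and for a general holomorphic function on $D\cap\bC_I$ the two multiplicities can differ. If the multiplicities $m_+$ at $x+yI$ and $m_-$ at $x-yI$ were different, the largest power of $(z-x)^2+y^2$ dividing $f_I$ would be $\min(m_+,m_-)$, and your quotient $h$ would still vanish at one of the two conjugate points---so the claims ``$h$ has no zeros on $\{x\pm yI\}$'' and ``$g$ has no zeros on $[x+y\bS]$,'' which are exactly what the lemma demands, would fail. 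Since this equality of conjugate multiplicities is precisely the quaternionic content of part (i), it cannot simply be asserted.

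The gap is closable with tools you already set up. Take $\ell=\min(m_+,m_-)$, $h=f_I/((z-x)^2+y^2)^{\ell}$, and $g=\mathrm{ext}(h)$; by your own quotient-plus-continuity argument, $g$ is regular on $D$ and slice preserving. If $m_+\neq m_-$, then $g$ vanishes at exactly one of $x\pm yI$, contradicting Lemma \ref{lem-zeros-of-slice-preserving-regular-func}, which forces a slice preserving function vanishing at one point of $[x+y\bS]$ to vanish on the whole sphere. Hence $m_+=m_-$, and the rest of your construction goes through. (An alternative fix: a slice preserving $f$ is real-valued on $D\cap\bR$, since its values there lie in $\bC_I\cap\bC_J=\bR$ for $I\perp J$; the reflection/identity principle then gives $f_I(\bar z)=\overline{f_I(z)}$, which also yields equal conjugate multiplicities.) The paper never confronts this issue because it only ever divides by the full quadratic at the quaternionic level, where Lemma \ref{lem-zeros-of-slice-preserving-regular-func} guarantees spherical vanishing before each division; the price it pays is a separate contradiction argument for termination, which your slice-level picture gets for free from the finiteness of zero orders of $f_I\not\equiv 0$.
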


\begin{proof}

We first prove part (i). If $f(p) \ne 0$, it follows from Lemma \ref{lem-zeros-of-slice-preserving-regular-func} that $f$ has no zeros in $[x + \bS y]$. Thus one can choose $\ell = 0$, and $h = f$. If $f(p) = 0$, Lemma \ref{lem-zeros-of-slice-preserving-regular-func} implies that $f$ vanishes identically on $[x + \bS y]$. By Lmema \ref{lem-spherical-multiplicity}, $f(q) = ((q - x)^2 + y^2)h(q)$ for some regular function $h: D \to \bH$. 

Assume the contrary to part (i), that is, for every integer $s > 0$, there exists a regular function $g^{(s)} : D \to \bH$ such that 
\begin{align*}
f(q) = ((q - x)^2 + y^2)^sg^{(s)}(q).
\end{align*}
Since $f$ and $((q - x)^2 + y^2)^s$ are slice preserving, $g^{(s)}$ is slice preserving for all integers $s > 0$. Thus the restrictions $f_I, g^{(s)}_I : D \cap \bC_I \to \bC_I$ are holomorphic functions in $\bC_I$ and satisfy
\begin{align*}
f_I(z) = ((z - x)^2 + y^2)^sg^{(s)}_I(z) = (z - (x + Iy))^s(z - (x - Iy))^s g^{(s)}_I(z)
\end{align*}
for all $z \in D \cap \bC_I$ and all integers $s > 0$. So $x + Iy$ is a zero of the holomorphic function $f_I$ of order $s$ for all integers $s > 0$. Thus $f_I \equiv 0$, and therefore by the Identity Principle (see Theorem \ref{thm1.12-gss}), $f$ is identically to zero, a contradiction. Thus there exists a largest positive integer $\ell$ such that 
\begin{align*}
f(q) = ((q - x)^2 + y^2)^{\ell} g(q)
\end{align*}
for some regular function $g : D \to \bH$. The same arguments implies that $g$ is a slice preserving regular function on $D$. Furthermore if $g$ has a zero in $[x + \bS y]$, Lemma \ref{lem-zeros-of-slice-preserving-regular-func} implies that $g$ vanishes identically on $[x + \bS y]$. Therefore $g(q) = ((q - x)^2 + y^2)h(q)$ for some regular function $h$, and thus 
\begin{align*}
f(q) = ((q - x)^2 + y^2)^{\ell + 1} h(q),
\end{align*}
which contradicts the maximality of $\ell$. Thus $g$ has no zeros in $[x + \bS y]$, and part (i) follows immediately.

Using the same arguments as above, part (ii) follows immediately.

\end{proof}

The above lemma still holds for regular functions that are not necesarily slice preserving. The proof of the following result follows the same lines as that of the above lemma. 

\begin{lemma}
\label{lem-Zorn1-for-zero}

Let $f$ be a regular function on a symmetric slice domain $D$ such that $f \not\equiv 0$, and let $p = x + yI$ for some $x, y \in \bR$ and $I \in \bS$. Then
\begin{itemize}

\item [(i)] If $y \ne 0$, then there exists a largest integer $\ell$ and a regular function $g : D \to \bH$ such that $g$ does not vanish identically on $[x + \bS y]$ and 
\begin{align*}
f(q) = ((q - x)^2 + y^2)^{\ell} g(q)
\end{align*}
for all $q \in D$.

\item [(ii)] If $y = 0$,  then there exists a largest integer $\ell$ and a regular function $g : D \to \bH$ such that $g(x) \ne 0$ and 
\begin{align*}
f(q) = (q - x)^{\ell} g(q)
\end{align*}
for all $q \in D$.

\end{itemize}

\end{lemma}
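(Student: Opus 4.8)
The plan is to mimic the proof of Lemma \ref{lem-Zorn-for-zero} almost verbatim, with the only change being that we drop the hypothesis ``slice preserving'' and consequently weaken the conclusion on $g$ from ``has no zeros on $[x+\bS y]$'' to ``does not vanish identically on $[x+\bS y]$''. The essential point is that the slice-preserving hypothesis was used in the previous lemma only to invoke Lemma \ref{lem-zeros-of-slice-preserving-regular-func} (which upgrades a single zero on a sphere to vanishing on the whole sphere) and to run the holomorphic-factorization argument slice-by-slice inside one fixed complex line $\bC_I$; without slice preservation we instead use Lemma \ref{lem-spherical-multiplicity}(ii), which characterizes exactly the spheres on which $f$ vanishes identically, and the Splitting Lemma \ref{lem1.3-gss} to reduce to a statement about holomorphic functions.

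First I would treat part (i) with $y\neq 0$. If $f$ does not vanish identically on $[x+\bS y]$, take $\ell=0$ and $g=f$. Otherwise, by Lemma \ref{lem-spherical-multiplicity}(ii) we may write $f(q)=((q-x)^2+y^2)\,h(q)$ for some regular $h:D\to\bH$. Now I would argue by contradiction that the exponent cannot be increased indefinitely: suppose for every $s>0$ there is a regular $g^{(s)}$ with $f(q)=((q-x)^2+y^2)^s g^{(s)}(q)$. Choose $I,J\in\bS$ with $I\perp J$ and apply the Splitting Lemma \ref{lem1.3-gss} to write $f_I=F+JG$ with $F,G:D\cap\bC_I\to\bC_I$ holomorphic. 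Since $((q-x)^2+y^2)^s=(z-(x+Iy))^s(z-(x-Iy))^s$ on $\bC_I$ is a \emph{real} (hence slice preserving) factor, factoring it out of $f_I$ forces it to divide both $F$ and $G$ to order $s$ for every $s$; by the complex theory this makes $x+Iy$ a zero of infinite order of both $F$ and $G$, so $F\equiv G\equiv 0$, whence $f_I\equiv 0$ and, by the Identity Principle (Theorem \ref{thm1.12-gss}), $f\equiv 0$, a contradiction. Hence a largest such $\ell$ exists, giving $f(q)=((q-x)^2+y^2)^{\ell}g(q)$ with $g$ regular. Finally, if $g$ \emph{did} vanish identically on $[x+\bS y]$, Lemma \ref{lem-spherical-multiplicity}(ii) would let us factor out one more copy of $((q-x)^2+y^2)$, contradicting the maximality of $\ell$; therefore $g$ does not vanish identically on $[x+\bS y]$, which is exactly the required conclusion.

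For part (ii) with $y=0$, the point $p=x$ is real and the argument is even closer to the complex case. If $f(x)\neq 0$ take $\ell=0,\,g=f$; otherwise Lemma \ref{lem-spherical-multiplicity}(i) gives $f(q)=(q-x)\star g_1(q)$, and since $x$ is real the factor $(q-x)$ is slice preserving so $\star$ coincides with ordinary multiplication by $(q-x)$. I would again split $f_I=F+JG$ and note that $(z-x)$ with $x\in\bR$ divides $F$ and $G$ in the ordinary holomorphic sense; iterating cannot continue forever without forcing $f_I\equiv0$ and hence $f\equiv0$ by the Identity Principle. This produces the largest $\ell$ with $f(q)=(q-x)^{\ell}g(q)$, and maximality of $\ell$ together with $f\not\equiv0$ yields $g(x)\neq 0$.

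The main obstacle, and the only place where genuine care is needed beyond transcribing the previous proof, is justifying that powers of the real quadratic factor $((q-x)^2+y^2)$ (resp.\ the linear factor $(q-x)$) can be peeled off the non-slice-preserving function $f$ so that the quotient $g$ remains honestly \emph{regular} rather than merely slice-wise holomorphic. I would handle this cleanly by working through the Splitting Lemma: because the factor is real-coefficiented it commutes with everything and divides the two holomorphic components $F,G$ of $f_I$ independently, so the quotient's restriction $g_I=F_1+JG_1$ is holomorphic on $D\cap\bC_I$ and its extension $\mathrm{ext}(g_I)$ (Lemma \ref{lem-extension}) is the desired regular $g$; the Identity Principle then guarantees this $g$ is independent of the auxiliary choice of $I$. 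Everything else is a routine repetition of the order-of-vanishing and maximality bookkeeping already carried out for Lemma \ref{lem-Zorn-for-zero}.
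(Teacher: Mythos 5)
Your proposal is correct and is essentially the paper's own argument: the paper gives no separate proof of Lemma \ref{lem-Zorn1-for-zero}, stating only that it follows the same lines as Lemma \ref{lem-Zorn-for-zero}, and your two adaptations — using Lemma \ref{lem-spherical-multiplicity}(ii) (resp.\ (i)) to peel off factors whenever the function vanishes identically on $[x + \bS y]$ (resp.\ at $x$), and using the Splitting Lemma \ref{lem1.3-gss} so that the real-coefficient factor passes to the holomorphic components of $f_I$, forcing $f_I \equiv 0$ and hence $f \equiv 0$ by the Identity Principle if no largest exponent existed — are exactly the modifications that remark presupposes. (One purely notational caveat, inherited from the paper's statement of the Splitting Lemma: with the splitting written as $f_I = F + GJ$, as in \cite{GSS}, the factor $((z-x)^2+y^2)^s$ literally divides both components, whereas with the form $F + JG$ the second component acquires the conjugated factor $((\bar z - x)^2+y^2)^s$; either way both components vanish to order at least $s$ at $x+Iy$, which is all your contradiction argument needs.)
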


\begin{remark}
\label{rem-quotient-field-of-O_s}

Proposition \ref{prop-O_s-commutative-ring} implies that the quotient field of $\cO_s(D)$ exists (see Lang \cite{lang} for a notion of quotient fields and its construction), and consists of functions of the form $\dfrac{f}{g}$ for $f, g \in \cO_s(D)$ with $g \not\equiv 0$. Throughout the paper, we denote by $\cQ(\cO_s(D))$ the quotient field of $\cO_s(D)$.

Take an arbitrary element $\dfrac{f}{g} \in \cQ(\cO_s(D))$ for some regular functions $f, g \in \cO_s(D)$ with $g \not\equiv 0$.  By Lemma \ref{lem-inverse-of-reg-func-in-O_s}, $\dfrac{1}{g}$ is a slice preserving regular function on $D \setminus \cZ(g)$, and thus $\dfrac{f}{g}$ is a slice preserving regular functionon $D \setminus \cZ(g)$. We claim that every element in $\cZ(g)$ is a pole of $\dfrac{f}{g}$. Indeed, take an arbitrary $p = x + yI$ in $\cZ(g)$ for some $x, y \in \bR$ and $I \in \bS$. It suffices to consider the case when $y \ne 0$. The case when $y = 0$ uses the same arguments. By Lemma \ref{lem-Zorn-for-zero}, there exists a largest positive integer $\ell$ such that
\begin{align*}
g(q) = ((q - x)^2 + y^2)^{\ell} h(q),
\end{align*}
where $h$ is also a slice preserving regular function on $D$ that has no zeros in $[x + \bS y]$. Thus the restrictions $g_I, h_I : D \cap \bC_I \to \bC_I$ of $g$ and $h$ to $D \cap \bC_I$ satisfy
\begin{align*}
\dfrac{1}{g_I(q)} = ((q - x)^2 + y^2)^{-\ell}h_I(q)^{-1},
\end{align*}
and thus
\begin{align*}
\dfrac{f_I(q)}{g_I(q)} = ((q - x)^2 + y^2)^{-\ell}h_I(q)^{-1}f_I(q) = (q - p)^{\ell}(q - \bar{p})^{-\ell}h_I(q)^{-1}f_I(q),
\end{align*}
where $f_I : D \cap \bC_I \to \bC_I$ is the restriction of $f$ to $D \cap \bC_I$.

Since $h_I(p) = h_I(x + Iy) \ne 0$, and $(p - \bar{p})^{-\ell} = (x + Iy - (x - Iy))^{-\ell} = (2yI)^{-\ell} \ne 0$, it follows that $(q - \bar{p})^{-\ell}h_I(q)^{-1}f_I(q)$ is holomorphic in a neiborhood of $p$ in $\bC_I$. Thus $p$ is a pole of $\dfrac{f}{g}$ of order at most $\ell$. Thus every element in $\cZ(g)$ is a pole or a removable singularity of $\dfrac{f}{g}$. Note that since $g$ is a slice preserving regular function on $D$, $\cZ(g)$ consists of spheres $[x + \bS y]$ for some $x, y \in \bR$, and thus it is a symmetric set. Thus $D \setminus \cZ(g)$ is a symmetric slice domain, and therefore $\dfrac{f}{g}$ is a semiregular function on $D$ that is slice preserving regular on $\cD \setminus \cZ(g)$.

\end{remark}

The above remark motivates the following notion of \textbf{slice preserving semiregular functions} on a symmetric slice domain that is a natural generalization of that of slice preserving regular functions.

\begin{definition}
\label{def-slice-preserving-semi-func}

Let $f$ be a semiregular function on a symmetric slice domain $D$. The function $f$ is said to be \textbf{slice preserving} if there exists a symmetric slice domain $A \subset D$ such that the following conditions are true.
\begin{itemize}

\item [(i)] $f$ is a slice preserving regular function on $A$; and

\item [(ii)] every element in the set $P = D \setminus A$ is a pole or a removable singularity of $f$.

\end{itemize}

\end{definition}

We denote by $\cM_s(D)$ the subset of $\cM(D)$ consisting of all slice preserving semiregular functions on $D$. The following result follows immediately from Lemma \ref{lem-commutative-O_s}. 

\begin{lemma}
\label{lem-commutative-M_s}

Let $f, g \in \cM_s(D)$ be slice preserving semiregular functions on a symmetric slice domain $D$. Then $fg$ belongs in $\cM_s(D)$, and
\begin{align*}
f \star g = g \star f = fg = gf.
\end{align*}
Consequently $\cM_s(D)$ is a commutative ring with respect to ``$+$" and the $\star$-regular product.

\end{lemma}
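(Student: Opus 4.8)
The plan is to reduce the statement entirely to the already-established commutativity on the regular case (Lemma \ref{lem-commutative-O_s}) via the local factorization of semiregular functions, so that almost no new analysis is required. The key observation is that Lemma \ref{lem-commutative-M_s} has three separate assertions bundled together: first, that $f\star g=g\star f=fg=gf$ for slice preserving semiregular $f,g$; second, that this common product lies in $\cM_s(D)$; and third, that $\cM_s(D)$ is thereby a commutative ring. I would handle these in that order, since the ring-theoretic conclusion is formal once the product is understood.

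First I would establish the pointwise and local structure. Since $f,g\in\cM_s(D)$, each is slice preserving regular on a symmetric slice subdomain, and the complement consists of poles and removable singularities; let $A\subset D$ be a symmetric slice domain on which both $f$ and $g$ are slice preserving regular (intersect the two subdomains and discard the finitely-locally-many offending spheres, which still leaves a symmetric slice domain). On $A$ both $f$ and $g$ are slice preserving regular functions, so Lemma \ref{lem-commutative-O_s}, applied with the roles of slice preserving function filled by either $f$ or $g$, gives $f\star g=g\star f=fg=gf$ on $A$. Because a semiregular function is determined by its restriction to any such regular subdomain (via the Identity Principle, Theorem \ref{thm1.12-gss}, together with uniqueness of the semiregular extension), the four products agree as \emph{semiregular} functions on all of $D$, not merely on $A$. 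The slice preserving property passes to the product on $A$ because the product of two $\bC_I$-valued holomorphic restrictions is again $\bC_I$-valued; hence $fg$ is slice preserving regular on $A$, and at each excluded sphere it has at worst a pole (the pole orders of $f$ and $g$ add), so $fg\in\cM_s(D)$, giving the second assertion.

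The third assertion is then purely formal: $\cM_s(D)$ is a subset of the division ring $\cM(D)$ (recalled from \cite[Proposition 5.26]{GSS}), closed under $+$ trivially and under $\star$ by the second assertion just proved, and containing the constants; since $\star$ restricted to $\cM_s(D)$ coincides with the commutative usual product $fg=gf$, the subring $\cM_s(D)$ is commutative. One should check closure under additive inverses and that the multiplicative identity $1$ is slice preserving, both immediate.

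The main obstacle, and the only place demanding care, is the passage from equality \emph{on} $A$ to equality on all of $D$, that is, justifying that two semiregular functions agreeing on a symmetric slice subdomain where both are regular must agree everywhere as semiregular functions. This is where I would invoke the Identity Principle together with the uniqueness built into the definition of semiregular functions and into Lemma \ref{lem-extension}; the subtlety is ensuring that the poles match up so the agreement extends across the excluded spheres rather than only on the regular locus. Everything else—commutativity on $A$, slice preservation of products, and the ring axioms—reduces directly to Lemma \ref{lem-commutative-O_s} and to the corresponding facts for holomorphic restrictions in each $\bC_I$.
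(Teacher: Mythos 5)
Your proposal is correct and follows essentially the same route as the paper: the paper's proof likewise invokes \cite[Proposition 5.26]{GSS} to get that $f\star g$ and $g\star f$ are semiregular, restricts to the common regular locus $D\setminus(\cP(f)\cup\cP(g))$ (your $A$), applies Lemma \ref{lem-commutative-O_s} there, and implicitly extends the identity to all of $D$. You merely spell out the extension step and the ring axioms more explicitly than the paper does.
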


\begin{remark}

We will prove below that $\cM_s(D)$ in fact is a field.

\end{remark}

\begin{proof}

Recall from  \cite[Proposition 5.26]{GSS} that $\cM(D)$ is a division ring with respect to ``$+$" and the $\star$-regular product. Thus it is clear that both $f \star g$ and $g \star f$ are semiregular functions on $D$. On $D \setminus (\cP(f) \cup \cP(g))$, both $f$ and $g$ are regular, and by definition, both $f$ and $g$ are slice preserving regular functions on $D \setminus (\cP(f) \cup \cP(g))$. It thus follows from Lemma \ref{lem-commutative-O_s} that 
\begin{align*}
f \star g = g \star f = fg = gf,
\end{align*}
which proves the lemma.

\end{proof}

 The following result is obvious from Remark \ref{rem-quotient-field-of-O_s}.
\begin{proposition}
\label{prop-quotient-field-contained-in-M_s}

$\cQ(\cO_s(D))$ is a subfield of $\cM_s(D)$.

\end{proposition}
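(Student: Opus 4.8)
```latex
\section*{Proof proposal for Proposition \ref{prop-quotient-field-contained-in-M_s}}

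The plan is to show that the obvious inclusion map sends $\cQ(\cO_s(D))$ into $\cM_s(D)$ and that this inclusion is a homomorphism of fields, so that $\cQ(\cO_s(D))$ is realized as a subfield. The key observation has in fact already been established in Remark \ref{rem-quotient-field-of-O_s}: for any $\frac{f}{g} \in \cQ(\cO_s(D))$ with $f, g \in \cO_s(D)$ and $g \not\equiv 0$, the function $\frac{f}{g}$ is slice preserving regular on $D \setminus \cZ(g)$, and every point of $\cZ(g)$ is a pole or removable singularity of $\frac{f}{g}$. Since $\cZ(g)$ is a symmetric set (being a union of spheres $[x + y\bS]$ by Lemma \ref{lem-zeros-of-slice-preserving-regular-func}), the set $A = D \setminus \cZ(g)$ is a symmetric slice domain, and so $\frac{f}{g}$ satisfies exactly the two conditions in Definition \ref{def-slice-preserving-semi-func}. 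Thus $\frac{f}{g} \in \cM_s(D)$, which gives the set-theoretic inclusion $\cQ(\cO_s(D)) \subseteq \cM_s(D)$.

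First I would verify that the inclusion is well defined on equivalence classes: if $\frac{f_1}{g_1} = \frac{f_2}{g_2}$ in $\cQ(\cO_s(D))$, meaning $f_1 g_2 = f_2 g_1$ in the integral domain $\cO_s(D)$ (Proposition \ref{prop-O_s-commutative-ring}), then the two semiregular functions agree on the dense open symmetric slice subdomain where both $g_1$ and $g_2$ are nonvanishing, hence agree as elements of $\cM_s(D)$ by the Identity Principle (Theorem \ref{thm1.12-gss}). Next I would confirm that the inclusion respects the algebraic operations: addition and multiplication of fractions in $\cQ(\cO_s(D))$ correspond to pointwise addition and the $\star$-product in $\cM_s(D)$ on the common domain of regularity, using Lemma \ref{lem-commutative-M_s} and Corollary \ref{cor-comm-group-O_s} to identify $\star$ with ordinary multiplication for slice preserving functions. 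Since $\cQ(\cO_s(D))$ contains the constants and is closed under these operations, and since every nonzero fraction $\frac{f}{g}$ has inverse $\frac{g}{f}$ again in $\cQ(\cO_s(D))$, the image is a subfield of $\cM_s(D)$.

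The content of the statement is genuinely light, as the author signals by calling it ``obvious''; the real work is packaged into Remark \ref{rem-quotient-field-of-O_s} and the commutativity established in Lemma \ref{lem-commutative-M_s}. The only step requiring any care is the well-definedness check, where one must be sure that two fractions representing the same class extend to the \emph{same} semiregular function rather than merely agreeing on an open set; this is precisely where the Identity Principle does the heavy lifting, since agreement on a set with an accumulation point on a single complex line $D \cap \bC_I$ forces global agreement. I do not anticipate any essential obstacle, and the proof should reduce to citing Remark \ref{rem-quotient-field-of-O_s} for the inclusion together with a short verification that the field structure is preserved.
```
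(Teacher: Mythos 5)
Your proposal is correct and follows exactly the paper's route: the paper declares the proposition ``obvious from Remark \ref{rem-quotient-field-of-O_s},'' which is precisely the inclusion argument you give. Your additional checks (well-definedness of the map on equivalence classes via the Identity Principle, and preservation of the field operations using Lemma \ref{lem-commutative-M_s}) simply make explicit what the paper leaves implicit, so there is no substantive difference.
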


We now prove a structure theorem for $\cM_s(D)$. 

\begin{theorem}
\label{thm-M_s-is-a-field}

Let $D$ be a symmetric slice domain in $\bH$. Then $\cM_s(D)$ is a field with respect to the usual addition ``$+$" and the usual multiplication ``$\cdot$". Furthermore in $\cM_s(D)$, the usual multiplication ``$\cdot$" concides with the $\star$-regular product, and $\cQ(\cO_s(D))$ is a subfield of $\cM_s(D)$, where $\cQ(\cO_s(D))$ is the quotient field of $\cO_s(D)$.

\end{theorem}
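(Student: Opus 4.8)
The plan is to build on Lemma \ref{lem-commutative-M_s}, which already shows that $\cM_s(D)$ is a commutative ring in which the usual multiplication agrees with the $\star$-regular product. The only substantive thing left to establish is that every nonzero element is invertible. So I would fix $f \in \cM_s(D)$ with $f \not\equiv 0$ and prove that the pointwise reciprocal $\dfrac{1}{f}$ is again a slice preserving semiregular function on $D$ and serves as the multiplicative inverse of $f$.

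First I would locate the regular locus. By Definition \ref{def-slice-preserving-semi-func}, there is a symmetric slice domain $A \subset D$ on which $f$ is slice preserving regular, with every point of $D \setminus A$ a pole or removable singularity. On $A$ the zero set $\cZ(f)$ consists of spheres $[x + y\bS]$ by Lemma \ref{lem-zeros-of-slice-preserving-regular-func}, and these are isolated: along each $\bC_I$ the restriction $f_I$ is a nonzero holomorphic function (by the Identity Principle, Theorem \ref{thm1.12-gss}), hence has isolated zeros. Thus $\cZ(f)$ is a discrete symmetric set and $A \setminus \cZ(f)$ is again a symmetric slice domain. By Lemma \ref{lem-inverse-of-reg-func-in-O_s}, $\dfrac{1}{f}$ is a slice preserving regular function on $A \setminus \cZ(f)$.

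Next I would check that the excised points, namely those in $\cZ(f)$ together with $D \setminus A$, are poles or removable singularities of $\dfrac{1}{f}$, so that $\dfrac{1}{f}$ is semiregular in the sense of Definition \ref{def-semi-regular}. For a sphere $[x + y\bS] \subset \cZ(f)$, Lemma \ref{lem-Zorn-for-zero} yields a factorization $f(q) = ((q-x)^2 + y^2)^{\ell} g(q)$ (or $f(q) = (q-x)^{\ell} g(q)$ when $y = 0$) with $g$ slice preserving regular and nonvanishing on the sphere; inverting and applying Lemma \ref{lem-inverse-of-reg-func-in-O_s} to $g$ shows that near the sphere $\dfrac{1}{f} = ((q-x)^2 + y^2)^{-\ell}\cdot\dfrac{1}{g}$, whose Laurent expansion has finite principal part, so each such point is a pole of $\dfrac{1}{f}$. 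For a point of $D \setminus A$, where $f$ has a pole or removable singularity, $\dfrac{1}{f}$ tends to $0$ or stays bounded, so the singularity is removable. Hence $\dfrac{1}{f}$ is semiregular on $D$, and since each restriction $(1/f)_I = 1/f_I$ maps $\bC_I$ into $\bC_I$, it is slice preserving; that is, $\dfrac{1}{f} \in \cM_s(D)$.

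Finally I would conclude. Pointwise, $f \cdot \dfrac{1}{f} = 1$ off the discrete exceptional set, and since multiplication coincides with the $\star$-product (Lemma \ref{lem-commutative-M_s}), $\dfrac{1}{f}$ is a two-sided inverse of $f$ in $\cM_s(D)$, unique by the Identity Principle. Therefore $\cM_s(D)$ is a field in which $\cdot$ and $\star$ agree, and the assertion that $\cQ(\cO_s(D))$ is a subfield is exactly Proposition \ref{prop-quotient-field-contained-in-M_s}. I expect the main obstacle to be the third step: one must rule out that a zero of $f$ produces an essential singularity of $\dfrac{1}{f}$, and it is precisely the slice preserving hypothesis, through the finite-order factorization of Lemma \ref{lem-Zorn-for-zero} and the coincidence of the pointwise and $\star$ inverses, that forces the singularity to be a genuine pole.
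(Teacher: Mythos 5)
Your proposal is correct and follows essentially the same route as the paper's proof: the commutative ring structure and coincidence of the two products from Lemma \ref{lem-commutative-M_s}, the subfield claim from Proposition \ref{prop-quotient-field-contained-in-M_s}, inversion of $f$ on the complement of its zeros and poles via Lemma \ref{lem-inverse-of-reg-func-in-O_s}, and the finite-order factorization of Lemma \ref{lem-Zorn-for-zero} (this is exactly the argument the paper cites from Remark \ref{rem-quotient-field-of-O_s}) to show the excised points are poles or removable singularities of $1/f$. The only micro-slip is at a removable singularity of $f$ whose extended value is zero: there $1/f$ is unbounded and has a pole rather than a removable singularity, but this case is already covered by your sphere-zero factorization step, so the conclusion is unaffected.
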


\begin{proof}

By Lemma \ref{lem-commutative-M_s} and Proposition \ref{prop-quotient-field-contained-in-M_s}, $\cM_s(D)$ is a commutative ring that contains $\cQ(\cO_s(D))$ as a subfield.

Let $f \not\equiv 0$ be an element in $\cM_s(D)$. Then $f$ is slice preserving regular on $D \setminus \cP(f)$. By  Lemma \ref{lem-inverse-of-reg-func-in-O_s}, $\dfrac{1}{f}$ is a slice preserving regular function on $D \setminus (\cZ(f) \cup \cP(f))$. Using the same arguments as in Remark \ref{rem-quotient-field-of-O_s}, every point in $\cZ(f) \cup \cP(f)$ is a pole or a removable singularity of $\dfrac{1}{f}$, which proves that $\dfrac{1}{f}$ is a slice preserving semiregular function on $D$ Therefore $\cM_s(D)$ is a field.

\end{proof}

\begin{remark}

In Section \ref{sec-weierstrass}, we will prove that for $D = \bH$, 
\begin{align*}
\cQ(\cO_s(\bH)) = \cM_s(\bH),
\end{align*}
that is, the quotient field of the integral domain of all slice preserving entire functions is the ring of all slice preserving semiregular functions on $\bH$. 

In general, we do not know whether $\cQ(\cO_s(D)) = \cM_s(D)$ for every symmetric slice domian $D$ in $\bH$, or whether there exists a symmetric slice domain $D$ in $\bH$ for which $\cQ(\cO_s(\bH))$ is a proper subset of $\cM_s(D)$. If there exists an analogue of the Weierstrass factorization theorem for regular functions on symmetric slice domains in $\bH$, the following conjecture should have a positive answer.

\begin{conjecture}
\label{main-conj}

$\cQ(\cO_s(D)) = \cM_s(D)$ for every symmetric slice domain $D$.

\end{conjecture}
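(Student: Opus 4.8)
The inclusion $\cQ(\cO_s(D)) \subseteq \cM_s(D)$ is already established in Proposition~\ref{prop-quotient-field-contained-in-M_s}, so the plan is to prove the reverse inclusion $\cM_s(D) \subseteq \cQ(\cO_s(D))$: every $f \in \cM_s(D)$ should be written as $f = g/h$ with $g, h \in \cO_s(D)$. First I would pass to a single complex line. Fix $I \in \bS$ and write $D_I = D \cap \bC_I$, a conjugation-symmetric open subset of $\bC_I \cong \bC$. Using the extension lemma (Lemma~\ref{lem-extension}) and the representation formula (Theorem~\ref{cor-1.16-gss}), restriction $f \mapsto f_I$ should identify $\cO_s(D)$ with the ring $\cO_\bR(D_I)$ of holomorphic functions $\varphi : D_I \to \bC_I$ satisfying the reality condition $\varphi(\bar z) = \overline{\varphi(z)}$, and should identify $\cM_s(D)$ with the corresponding field $\cM_\bR(D_I)$ of such meromorphic functions; since the usual product coincides with the $\star$-product on both rings (Corollary~\ref{cor-comm-group-O_s} and Lemma~\ref{lem-commutative-M_s}), these are ring isomorphisms. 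Under this dictionary the conjecture becomes the purely complex statement $\cM_\bR(D_I) = \cQ(\cO_\bR(D_I))$.

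Granting that reduction, the argument follows the classical complex recipe. Given $f \in \cM_s(D)$ with $f \not\equiv 0$, its poles occur on whole spheres $[x + y\bS]$, so the pole orders define an effective spherical divisor $\fe \in \Div(D)$; equivalently, $f_I$ has a conjugation-symmetric pole divisor on $D_I$. The crucial step is a Weierstrass product theorem on the arbitrary symmetric slice domain $D$: for every effective spherical divisor $\fe$ on $D$ there should exist $h \in \cO_s(D)$ whose zero divisor is exactly $\fe$, that is, $h$ vanishes to the prescribed spherical order on each sphere in the support of $\fe$ (built from factors $(q-x)^2 + y^2$ and $q - x$ as in Lemma~\ref{lem-spherical-multiplicity} and Lemma~\ref{lem-Zorn-for-zero}) and nowhere else. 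With such an $h$ matching the pole divisor of $f$, I would set $g := fh$. Using the local normal form of Theorem~\ref{cor5.27-gss} one checks that $g$ has only removable singularities on $D$, so $g$ extends to an element of $\cO_s(D)$; then $f = g/h \in \cQ(\cO_s(D))$, completing the reverse inclusion.

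The hard part is exactly the Weierstrass product theorem on a general symmetric slice domain, which is the missing ingredient: on $\bH$ one has the factorization theorem of Gentili and Vignozzi recalled in Section~\ref{sec-weierstrass}, but on an arbitrary $D$ there is no global product to imitate. Through the dictionary above this reduces to a \emph{conjugation-equivariant} Weierstrass theorem on the planar domain $D_I$: one must solve the second Cousin problem on $D_I$ while respecting the involution $z \mapsto \bar z$. Since every open subset of $\bC$ is noncompact, the ordinary second Cousin problem is solvable there; the extra task is to make the solution symmetric, which I would attempt by solving the associated $\bar\partial$-equation and then averaging a solution $u$ against its conjugate, $u(z) \mapsto \tfrac{1}{2}\bigl(u(z) + \overline{u(\bar z)}\bigr)$, to produce a symmetric Weierstrass factor. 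Translating such an equivariant planar construction back into a genuinely quaternionic statement on $D$—and verifying that the resulting $h$ is slice preserving regular with the exact prescribed spherical zero set—is where the real work, and the main risk of the plan, lies.
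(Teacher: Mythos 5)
You are attempting to prove a statement that the paper itself leaves open: it is stated as a \emph{conjecture}, and the paper proves only the special case $D = \bH$ (Theorem \ref{thm-M_s(H)-quotient-field}), by combining the Gentili--Vignozzi factorization theorem on $\bH$ (Theorem \ref{quaternionic-Weierstrass-thm}) with the divisor-existence statement it yields (Theorem \ref{existence-thm}) and the holomorphy criterion (Lemma \ref{lem-holomorphy-criterion}). Your proposal follows exactly the route the paper itself points to in the remark preceding the conjecture: prescribe the pole divisor of $f$ by some $h \in \cO_s(D)$ with $\divi(h)$ equal to that positive spherical divisor, set $g = fh$, check $\divi(g) \ge 0$, and conclude $f = g/h \in \cQ(\cO_s(D))$. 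The genuine gap is the one you name yourself: the existence, on an \emph{arbitrary} symmetric slice domain $D$, of a slice preserving regular function with a prescribed positive spherical divisor. This is precisely the analogue of Theorem \ref{existence-thm} beyond $D = \bH$, and it is supplied neither by you nor by the paper; without it, what you have is a correct reduction of the conjecture to an unproven factorization theorem, not a proof. Since no global product expansion is available on a general $D$, this step cannot be waved through by imitating Section \ref{sec-weierstrass}.

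Two remarks on the reduction itself, which is sound and is arguably sharper than what the paper records. The dictionary between $\cO_s(D)$ and conjugation-equivariant holomorphic functions on $D_I$ is correct: a slice preserving regular function is real-valued on $D \cap \bR$ (this is proved inside Lemma \ref{main-lem1-Bers}, Case 1), so by the reflection principle $f_I(\bar z) = \overline{f_I(z)}$ on the domain $D_I$; conversely, applying $\text{ext}$ to such a $\varphi$ and writing $\varphi(x+yI) = u + Iv$, the representation formula gives $\text{ext}(\varphi)(x + yJ) = u + Jv \in \bC_J$, so the extension is slice preserving. Thus the conjecture is indeed equivalent to a conjugation-equivariant Weierstrass theorem on symmetric planar domains. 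Your $\bar\partial$-plan for that planar statement is the right kind of idea, with one caution: the averaging $u \mapsto \tfrac{1}{2}\bigl(u(z) + \overline{u(\bar z)}\bigr)$ must be applied to the \emph{additive} datum (a solution of the linear equation $\bar\partial u = \bar\partial f / f$, where $f$ is a symmetric smooth function carrying the divisor), after which one sets $h = f e^{-u}$; averaging any multiplicative object directly would destroy the zero set. Carried out carefully this is a plausible path to the equivariant planar theorem and hence to the conjecture, but until that step is actually executed, the statement remains exactly as open as the paper leaves it.
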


Note that there is an analogue of the Weierstrass factorization theorem for regular functions on the whole quaternions $\bH$ that is due to Gentili and  Vignozzi \cite{gv}. We will recall this theorem in Section \ref{sec-weierstrass}.

\end{remark}

The next result plays a key role in the construction of an important example of spherical divisors on a symmetric slice domain in $\bH$.

\begin{corollary}
\label{cor-to-cor5.27}

Let $f \in \cM_s(D)$ be a slice preserving semiregular function on a symmetric slice domain $D$ such that $f \not\equiv 0$. Let $p = x + yI$ be a point in $D$ for some $x, y \in \bR$ such that $y \ne 0$ and $I \in \bS$. Let $m = \ord_f(p)$ and $n = \ord_f(\bar{p})$. Then the following are true.
\begin{itemize}

\item [(i)] if $\max(n, m) = 0$, then there exists a unique integer $\ell \ge 0$ and a unique slice preserving semiregular function $h$ on $D$ without poles and zeros in $[x + \bS y]$ such that
\begin{align*}
f(q) = ((q - x)^2 + y^2)^{\ell} h(q).
\end{align*}

\item [(ii)] if $\max(n, m) > 0$, then $n = m$, and there exists a unique slice preserving semiregular function $g$ on $D$ without poles and zeros in $[x + \bS y]$ such that
\begin{align*}
f(q) = ((q - x)^2 + y^2)^{-n}  g(q).
\end{align*}

\end{itemize}

\end{corollary}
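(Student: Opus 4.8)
The plan is to reduce the statement to the structure theorem already available, namely Theorem~\ref{cor5.27-gss}, and then to pin down the behaviour of the resulting factor on the sphere $[x + \bS y]$ by a direct analysis of orders. Since $f \in \cM_s(D)$ is in particular a slice preserving semiregular function, I would first invoke Theorem~\ref{cor5.27-gss} to obtain, after possibly interchanging the roles of $p$ and $\bar p$ (so that the smaller of the two orders is called $m$), a unique semiregular function $g_0$ on $D$ with no poles in $[x + \bS y]$ and
\begin{align*}
f(q) = ((q - x)^2 + y^2)^{-n}(q - p)^{\star(n-m)} \star g_0(q).
\end{align*}
Because $f$ is slice preserving and the factor $((q-x)^2+y^2)^{-n}$ is slice preserving, I expect that $g_0$ together with the $(q-p)^{\star(n-m)}$ factor must conspire to be slice preserving; the key structural point is that in $\cM_s(D)$ the $\star$-product is the ordinary product (Lemma~\ref{lem-commutative-M_s}), so all factors can be treated as genuinely commuting slice preserving objects once I have established that they lie in $\cM_s(D)$.

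For part (ii), the first thing I would prove is the symmetry claim $n = m$. Here I would use Lemma~\ref{lem-zeros-of-slice-preserving-regular-func} (zeros of a slice preserving regular function come in full spheres) applied to $1/f$ and to $f$ on the regular locus, or more directly the fact that the spherical order of a slice preserving function is constant on $[x+\bS y]$: since $f$ is slice preserving, its restriction $f_I$ to $\bC_I$ is holomorphic and takes values in $\bC_I$, and by Theorem~\ref{cor-1.16-gss} the orders of vanishing (or pole orders) at $x+Iy$ and at its conjugate $x-Iy = \overline{x+Iy}$ must coincide. Thus $\ord_f(p) = \ord_f(\bar p)$, giving $n = m$ and collapsing the $(q-p)^{\star(n-m)}$ factor to $1$. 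This leaves $f(q) = ((q-x)^2+y^2)^{-n} g(q)$ with $g = g_0$, and I would then show that $g$ has neither poles nor zeros on $[x+\bS y]$: it has no poles there by the conclusion of Theorem~\ref{cor5.27-gss}, and it has no zeros because the full pole order $n>0$ of $f$ has been entirely absorbed into the explicit factor, so any further zero of $g$ would force $\ord_f$ on the sphere to exceed what the factorization allows, contradicting maximality.

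For part (i), where $\max(n,m)=0$, the function $f$ has no poles on $[x+\bS y]$ and is therefore slice preserving regular in a symmetric neighbourhood of that sphere. I would then apply Lemma~\ref{lem-Zorn-for-zero}(i) to extract the largest $\ell \ge 0$ with $f(q) = ((q-x)^2+y^2)^{\ell} h(q)$ where $h$ is slice preserving regular with no zeros on $[x+\bS y]$; since $h$ is regular it has no poles either, so $h \in \cM_s(D)$ has neither zeros nor poles on the sphere as required. Uniqueness of $\ell$ follows from the maximality statement in Lemma~\ref{lem-Zorn-for-zero}, and uniqueness of $h$ (and of $g$ in part (ii)) follows because $\cM_s(D)$ is a field (Theorem~\ref{thm-M_s-is-a-field}): dividing by the explicit nonzero factor determines the remaining factor uniquely.

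The main obstacle I anticipate is the bookkeeping around the symmetry $n=m$ in case (ii) and the transfer of the "no zeros or poles" property across the $\star$-product versus ordinary product. Theorem~\ref{cor5.27-gss} is stated for general semiregular functions where the $\star$-product genuinely matters and where $m$ and $n$ can differ, so the real content here is showing that slice preservation forces $n = m$ and hence kills the asymmetric $(q-p)^{\star(n-m)}$ factor; once that is done, Lemma~\ref{lem-commutative-M_s} lets me replace every $\star$ by an ordinary product and the rest is a clean order-counting argument. I would take care to argue the order equality via the slice structure (restriction to $\bC_I$ together with Theorem~\ref{cor-1.16-gss}) rather than re-deriving it from the Laurent expansion.
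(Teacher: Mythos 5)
Your proposal is correct in outline, and on the one point where the paper does real work --- the equality $n = m$ in part (ii) --- it takes a genuinely different and substantially shorter route. The paper proves $n=m$ by contradiction: it analyzes how the zero of the factor $s(q) = (q-p)^{\star(n-m)}$ propagates through the $\star$-product via Lemma \ref{lem-zeros-of-product-of-func}, computes $s(q_0)^{-1}q_0s(q_0)$ explicitly for $q_0 = x+yJ$ with $J \perp I$, splits into cases according to $n-m \pmod 4$, and iterates the extraction of spherical factors until the remaining factor is forced to vanish identically on the sphere. You instead deduce $n=m$ from the conjugate-reflection symmetry of slice preserving functions, which kills the asymmetric factor at once and also yields Proposition \ref{prop-eqn-between-sord-and-ord}(i) directly rather than as a by-product. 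Your treatment of part (i), of the claim that $g$ has no zeros on the sphere, and of uniqueness matches the paper's.

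One step you must write out, because it is stated nowhere in the paper and Theorem \ref{cor-1.16-gss} by itself does not literally assert anything about orders: for slice preserving $f$ one has $f(\bar q) = \overline{f(q)}$, hence $f_I(\bar z) = \overline{f_I(z)}$ on $D \cap \bC_I$, and only from this do the pole orders at $p$ and $\bar p$ coincide. The derivation does follow from exactly the tools you cite: writing $f(x'+y'I') = a + I'b$ and $f(x'-y'I') = c + I'd$ with $a,b,c,d \in \bR$, the representation formula gives, for $J \perp I'$,
\begin{align*}
f(x'+y'J) = \frac{a+c}{2} + I'\,\frac{b+d}{2} + JI'\,\frac{c-a}{2} - J\,\frac{d-b}{2},
\end{align*}
and since $f(x'+y'J)$ must lie in $\bC_J$ while $1, I', J, JI'$ are linearly independent, the $I'$- and $JI'$-components vanish, i.e.\ $c=a$ and $d=-b$; then the two meromorphic functions $f_{I}$ and $z \mapsto \overline{f_{I}(\bar z)}$ agree on the regular locus and hence everywhere, equating the orders. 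You should also say a word identifying the quaternionic order $\ord_f(p)$ with the classical pole order of the restriction $f_I$ at $p$ (for slice preserving $f$ the Laurent coefficients at $p$ lie in $\bC_I$, so the quaternionic and complex expansions agree); the paper uses this identification implicitly as well. Two smaller points: in part (ii), a zero of $g$ on the sphere would \emph{lower} the pole order of $f$ at $p$ below $n$ --- that, not an excess of order, is the contradiction; and in part (i) the function $h$ produced by Lemma \ref{lem-Zorn-for-zero} is only defined on a symmetric neighborhood $U$ of the sphere, so to conclude you should replace it by $((q-x)^2+y^2)^{-\ell}f \in \cM_s(D)$, which agrees with it on $U$ and inherits the absence of zeros and poles on $[x+\bS y]$.
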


\begin{proof}

By Theorem \ref{cor5.27-gss},  there exists a unique semiregular function $g$ on $D$ such that the following are true:
\begin{itemize}

\item [(a)] $g$ has no poles in $[x + \bS y]$; and

\item [(b)] $f(q) = ((q - x)^2 + y^2)^{-\max(n, m)}(q - \alpha)^{\star |n - m|}\star g(q)$, where 
\begin{align*}
\alpha =
\begin{cases}
p \; \; &\text{if $n \ge m$}, \\
\bar{p} \; \; &\text{if $m > n$}.
\end{cases}
\end{align*}

\end{itemize}

Suppose that $\max(n, m) = 0$. Then $m = n = 0$, and so $f$ has no poles in $[x + \bS y]$. Thus there exists a symmetric slice domain $U$ that contains $[x + \bS y]$ such that $f$ is regular on $U$, which implies that $f$ is a slice preserving regular function on $U$. By Lemma \ref{lem-Zorn-for-zero}, there exists a largest integer $\ell \ge 0$ and a slice preserving regular function $h : U \to \bH$ that has no zeros in $[x + \bS y]$ such that
\begin{align*}
f(q) = ((q - x)^2 + y^2)^{\ell}h(q)
\end{align*}
for all $q \in U$.

Note that the function $((q- x)^2 + y^2)^{-\ell} f(q)$ is a slice preserving semiregular function on $D$, and identical to $h$ on $U$. By abuse of notation, we also denote this semiregular function on $D$ by $h$. Since $h$ is regular on $U$, and $[x + \bS y] \subset U$, $h$ has no poles or zeros in $[x + \bS y]$, and part (i) of Corollary \ref{cor-to-cor5.27} follows.

Suppose that $\max(n, m) > 0$.We claim that $n = m$. Assume the contrary, i.e., $n \ne m$. Without loss of generality, and in replacing $p$ by $\bar{p}$, if necessary, we can assume that $n > m$. In this case, $\alpha = p$ and $\max(n, m) = n$.  Since $((q - x)^2 + y^2)^n$ is a slice preserving regular function and $g$ has no poles in $[x + \bS y]$, it follows from (a) and (b) above that there exists a symmetric slice domain $U$ that contains $[x + \bS y]$ such that $r(q) = (q - p)^{\star (n - m)}\star g(q)$ is a slice preserving regular function on $U$.

 Since $r$ has a zero at $q = p \in [x + \bS y]$, Lemma \ref{lem-spherical-multiplicity} implies that $r$ vanishes identically on $[x + \bS y]$. By Lemma \ref{lem-zeros-of-product-of-func}, and since the only zero of the regular function $s (q) = (q - p)^{\star (n - m)}$ is $q = p$, we deduce that 
\begin{align}
\label{e0-cor-to-cor5.27}
g(s(q_0)^{-1}q_0 s(q_0)) = 0
\end{align}
for all $q_0 \in [x + \bS y]$ such that $q_0 \ne p$. 

Let $J$ be an element in $\bS$ such that $I, J$ are orthogonal. Set $K = IJ$. By Proposition \ref{prop-inner-cross-prod-relations}, $1, I, J, K$ forms a basis of $\bH$ with the same algebraic properties as the standard basis $1, i, j, k$. Let $q_0 = x + yJ$. By Theorem \ref{cor-1.16-gss},
\begin{align*}
s(q_0) = s(x + yJ) = \dfrac{1}{2}(s(x + yI) + s(x - yI)) + \dfrac{JI}{2}(s(x - yI) - s(x + yI)).
\end{align*}

Since $p = x + yI \in \bC_I$, for all $z \in \bC_I$, $s(z) = (z - p)^{\star (n - m)} = (z - p)^{n - m}$. In particular,
\begin{align*}
s(p) = s(x + yI) = 0
\end{align*}
and
\begin{align}
\label{e1-cor-to-cor5.27}
s(\bar{p}) = s(x - yI) = (x - yI - (x + yI))^{n - m} = (-2y)^{n - m}I^{n - m} \in \bC_I.
\end{align}
Thus
\begin{align*}
s(\bar{p})^{-1}\bar{p}s(\bar{p}) = \bar{p},
\end{align*}
and therefore it follows from (\ref{e0-cor-to-cor5.27}) that
\begin{align}
\label{e1/4-cor-to-cor5.27}
g(\bar{p}) = 0.
\end{align}

We see that
\begin{align*}
s(q_0) = \dfrac{s(x - yI)}{2}(1 + JI) = 2^{n - m - 1}(-y)^{n - m}I^{n - m}(1 + JI).
\end{align*}
Note that $1 + JI = 1 - IJ = 1 - K$, and thus
\begin{align*}
(1 + JI)^{-1} = (1 - K)^{-1} = \dfrac{1 + K}{2}.
\end{align*}
Thus
\begin{align*}
s(q_0)^{-1}q_0 s(q_0) &= x + s(q_0)^{-1}Js(q_0)y \\
&= x + (1 + JI)^{-1}I^{m - n}JI^{n - m}(1 + JI)y \\
&= x + \dfrac{1}{2}(1 + K) I^{m - n} J I^{n - m}(1 - K)y
\end{align*}

We know that for every $a \in \bZ$,
\begin{align}
\label{e1/2-cor-to-cor5.27}
I^a =
\begin{cases}
1 \; &\text{if $a \equiv 0 \pmod{4}$,}\\
I \; &\text{if $a \equiv 1 \pmod{4}$,} \\
-1 \; &\text{if $a \equiv 2 \pmod{4}$,} \\
-I \; &\text{if $a \equiv -1 \pmod{4}$}.
\end{cases}
\end{align}

Since $J(1 \pm K) = (1 \mp K)J$ and $I(1 \pm K) = (1 \mp K)I$, we deduce that 
\begin{align*}
\dfrac{1}{2}(1 + K) I^{m - n} J I^{n - m}(1 - K) =
\begin{cases}
-I \; &\text{if $n - m\equiv 0, 2 \pmod{4}$,}\\
 I \; &\text{if $n - m \equiv \pm 1 \pmod{4}$.} \\
\end{cases}
\end{align*}
Thus
\begin{align}
\label{e2-cor-to-cor5.27}
s(q_0)^{-1}q_0 s(q_0) = 
\begin{cases}
x + Iy = p \; &\text{if $n - m \equiv \pm 1 \pmod{4}$,} \\
x - Iy = \bar{p} \; &\text{if $n - m \equiv 0, 2 \pmod{4}$.} 
\end{cases}
\end{align}

We consider the following two cases.

\textit{Case 1. $n - m \equiv \pm 1 \pmod{4}$.}

In this case, it follows from , (\ref{e0-cor-to-cor5.27}), (\ref{e2-cor-to-cor5.27}) that 
\begin{align*}
g(p) = g(x + Iy) = 0.
\end{align*}
By (\ref{e1/4-cor-to-cor5.27}), $g(\bar{p}) = g(x - Iy) = 0$. Thus the Extension Formula (see  Theorem \ref{cor-1.16-gss}) that $g$ vanishes identically on $[x + \bS y]$. By Lemma \ref{lem-Zorn1-for-zero}, there exists a largest positive integer $\ell$ such that $g(q) = ((q - x)^2 + y^2)^{\ell}g_1(q)$ for some regular function $g_1 : U \to \bH$ such that $g_1$ does not vanish identically on $[x + \bS y]$. Thus
\begin{align*}
f(q) = ((q - x)^2 + y^2)^{\ell - n} (q - p)^{\star (n - m)} \star g_1(q).
\end{align*}
Repeating the same arguments as above with $g_1$ in the role of $g$, we deduce that $g_1$ vanishes identically on $[x + \bS y]$, which is a contradiction.

\textit{Case 2. $n - m \equiv 0, 2 \pmod{4}$.}

By (\ref{e1/4-cor-to-cor5.27}), $g(\bar{p}) = g(x - Iy) = 0$. By Lemma \ref{lem-spherical-multiplicity}, there exists a regular function $h : U \to \bH$ such that
\begin{align*}
g(q) = (q - \bar{p})\star h(q)
\end{align*}
for all $q \in U$. Since $(q - p) \star (q - \bar{p}) = ((q - x)^2 + y^2)$, we deduce that
\begin{align}
\label{e3-cor-to-cor5.27}
f(q) &= ((q - x)^2 + y^2)^{\ell - n} (q - p)^{\star (n - m)} \star g(q) \nonumber \\
&= ((q - x)^2 + y^2)^{\ell - n} (q - p)^{\star (n - m)} \star (q - \bar{p}) \star h(q)  \nonumber \\
 &= ((q - x)^2 + y^2)^{\ell - n + 1} (q - p)^{\star (n - m - 1)} \star h(q).
\end{align}

Since $n > m$ and $n - m \equiv 0, 2 \pmod{4}$, we deduce that $n - m - 1 > 0$. Since $n - m - 1 \equiv \pm 1 \pmod{4}$, we use the same arguments and (\ref{e3-cor-to-cor5.27}) as in \textit{Case 1} with $(q - p)^{\star (n - m - 1)} \star h(q)$ in the role of $(q - p)^{\star (n  - m )}\star g(q)$, we deduce that 
\begin{align*}
h(s(q_0)^{-1}q_0 s(q_0)) = 0
\end{align*}
for all $q_0 \in [x + \bS y]$ such that $q_0 \ne p$, and the same arguments imply that $h$ vanishes identically on $[x + \bS y]$, which reaches the same contradiction as in \textit{Case 1}.

From \textit{Cases 1 and 2}, we deduce that $n = m$, and thus $p = x + Iy$ is a pole of $f$ of order $n > 0$. On the other hand, we have
\begin{align}
\label{e4-cor-to-cor5.27}
f(q) = ((q - x)^2 + y^2)^{-n} g(q),
\end{align}
where $g$ has no poles in $[x + \bS y]$. We contend that $g$ has no zeros in $[x + \bS y]$. Assume the contrary, i.e., $g$ has a zero in $[x + \bS y]$. Since $f$ and $((q - x)^2 + y^2)^{-n}$ are slice preserving, and $g$ has no poles in $[x + \bS y]$, there exists a symmetric slice domain $U$ that contains $[x + \bS y]$ such that $g$ is a slice preserving regular function on $U$. Since $g$ has a zero in $[x + \bS y]$, it follows from Lemma \ref{lem-zeros-of-slice-preserving-regular-func} that $g$ vanishes identically on $[x + \bS y]$, and it thus follows from Lemma \ref{lem-Zorn-for-zero} there exists a largest positive integer $\ell$ such that $g(q) = ((q - x)^2 + y^2)^{\ell} h(q)$, where $h$ is a regular function on $U$ that has no zeros in $[x + \bS y]$. 

 By (\ref{e4-cor-to-cor5.27}), we see that
\begin{align*}
f(q) = ((q - x)^2 + y^2)^{-n + \ell} h(q),
\end{align*}
and thus the restrictions $f_I, h_I$ of $f, h$ to $U \cap \bC_I$ satisfy 
\begin{align*}
f_I(q) = (q - p)^{-n + \ell}(q - \bar{p})^{-n + \ell} h_I(q).
\end{align*}
Since $h_I(p) = h_I(x + Iy) \ne 0$ and $h_I$ is a holomorphic function on $U \cap \bC_I$, we deduce from the above equation that the order of the pole $p = x + Iy$ of $f_I$ is precisely $|n - \ell|$, which is strictly less than $n$, a contradiction. Thus $g$ has no zeros in $[x + \bS y]$, and the corollary follows immediately.

\end{proof}

Using the same arguments as Corollary \ref{cor-to-cor5.27}, we obtain the following.

\begin{corollary}
\label{cor-to-cor5.27-1}

Let $f \in \cM_s(D)$ be a slice preserving semiregular function on a symmetric slice domain $D$ such that $f \not\equiv 0$. Let $x$ be a point in $D \cap \bR$. Let $n = \ord_f(x)$. Then the following are true.
\begin{itemize}

\item [(i)] if $n = 0$, then there exists a unique integer $\ell \ge 0$ and a unique slice preserving semiregular function $h$ on $D$ without poles and zeros at $x$ such that
\begin{align*}
f(q) = (q - x)^{\ell} h(q).
\end{align*}

\item [(ii)] if $n > 0$, then there exists a unique slice preserving semiregular function $g$ on $D$ without poles and zeros at $x$ such that
\begin{align*}
f(q) = (q - x)^{-n}  g(q).
\end{align*}

\end{itemize}

\end{corollary}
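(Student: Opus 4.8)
The plan is to mimic the proof of Corollary \ref{cor-to-cor5.27}, but for a real point $x$ where the whole sphere $[x + \bS y]$ degenerates to the singleton $\{x\}$; this is in fact the simpler situation, since there is no ``conjugate point'' $\bar{p}$ distinct from $p$ to worry about. Concretely, I would start by invoking Theorem \ref{cor5.27-gss} at the point $p = x$ (so $p = \bar p$, hence $m = n = \ord_f(x)$), which yields a unique semiregular function $g$ on $D$ with no poles in $[x + \bS \cdot 0] = \{x\}$ and satisfying $f(q) = (q - x)^{-n}\, g(q)$ (the factor $(q-x)^{\star(n-m)} = (q-x)^{\star 0} = 1$ drops out since $m = n$). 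The slice-preserving property of $f$ together with the fact that $(q-x)^{-n}$ is slice preserving forces $g$ to be slice preserving as well, via Lemma \ref{lem-commutative-M_s} and the Identity Principle.

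For part (i), when $n = 0$, the function $f$ has no pole at $x$, so there is a symmetric slice neighborhood $U$ of $x$ on which $f$ is slice preserving regular. I would then apply Lemma \ref{lem-Zorn-for-zero}(ii) to get the largest integer $\ell \ge 0$ and a slice preserving regular $h$ on $U$ with $h(x) \ne 0$ and $f(q) = (q-x)^{\ell} h(q)$ on $U$. Extending $h$ globally by setting it equal to the slice preserving semiregular function $(q-x)^{-\ell} f(q)$ on all of $D$ (reusing the notation $h$, as in the earlier proof), I conclude $h$ has neither a pole nor a zero at $x$, giving part (i).

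For part (ii), when $n > 0$, the representation $f(q) = (q-x)^{-n} g(q)$ from Theorem \ref{cor5.27-gss} already has $g$ free of poles at $x$; it remains only to check that $g$ has no zero at $x$. I would argue by contradiction exactly as in the last paragraph of the proof of Corollary \ref{cor-to-cor5.27}: if $g(x) = 0$, then on a symmetric slice neighborhood $U$ of $x$, $g$ is slice preserving regular, and by Lemma \ref{lem-Zorn-for-zero}(ii) we may write $g(q) = (q-x)^{\ell} h(q)$ with $\ell \ge 1$ and $h(x) \ne 0$. Passing to the restriction to $\bC_I$, the factorization $f_I(q) = (q-x)^{-n + \ell} h_I(q)$ with $h_I(x) \ne 0$ shows that the order of the pole of $f_I$ at $x$ is $|n - \ell| < n$, contradicting $\ord_f(x) = n$. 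Hence $g(x) \ne 0$, which finishes part (ii). Uniqueness in both parts follows from the uniqueness clause in Theorem \ref{cor5.27-gss} together with the integral-domain structure of $\cO_s$.

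The main thing to be careful about — really the only place where the degenerate geometry differs from Corollary \ref{cor-to-cor5.27} — is confirming that the elaborate case analysis about $n - m \pmod 4$ and the conjugation $s(q_0)^{-1} q_0 s(q_0)$ is vacuous here: since $y = 0$ forces $p = \bar p$ and thus $m = n$ automatically, none of that machinery is needed, and the proof collapses to the zero-checking step above. I expect no genuine obstacle; the claim that ``the same arguments'' transfer is justified precisely because the sphere collapses to a point, so the $\bC_I$-restriction arguments become the standard one-variable holomorphic order computation.
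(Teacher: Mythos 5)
Your part (i) and your observation that the mod-$4$ machinery of Corollary \ref{cor-to-cor5.27} becomes vacuous at a real point are both fine, and they match what the paper intends by ``the same arguments.'' The genuine problem is in part (ii), at the very first step: you claim that Theorem \ref{cor5.27-gss}, applied at $p = x$ (so $y = 0$, $m = n$), yields $f(q) = (q-x)^{-n}g(q)$ with $g$ pole-free at $x$. It does not. The leading factor in Theorem \ref{cor5.27-gss} is $((q-x)^2 + y^2)^{-n}$, which at $y = 0$ degenerates to $(q-x)^{-2n}$, not $(q-x)^{-n}$; only the middle factor $(q-p)^{\star(n-m)}$ drops out. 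So what the theorem actually provides is $f(q) = (q-x)^{-2n}g(q)$, and for \emph{this} $g$ your remaining claim is false: since $\ord_f(x) = n$, the slice restriction $g_I(z) = (z-x)^{2n}f_I(z)$ has a zero of order exactly $n$ at $x$, so $g$ \emph{does} vanish at $x$ whenever $n > 0$. Consequently your contradiction argument cannot close: run on the true factorization, the hypothesis $g(x) = 0$ leads via Lemma \ref{lem-Zorn-for-zero}(ii) to $g(q) = (q-x)^{\ell}h(q)$ with $h(x) \neq 0$, and comparing pole orders of $f_I$ at $x$ gives $2n - \ell = n$, i.e.\ $\ell = n$ --- no contradiction at all, just the correct bookkeeping.

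The repair is short, and it is precisely the step your write-up skips: after obtaining $f(q) = (q-x)^{-2n}g(q)$ with $g$ pole-free (hence slice preserving regular) near $x$, apply Lemma \ref{lem-Zorn-for-zero}(ii) to $g$ to write $g(q) = (q-x)^{\ell}h(q)$ with $h(x) \neq 0$, then compare orders of $f_I$ at $x$ to conclude $\ell = n$, whence $f(q) = (q-x)^{-n}h(q)$ with $h$ free of zeros and poles at $x$. Alternatively, bypass Theorem \ref{cor5.27-gss} altogether: set $g(q) = (q-x)^{n}f(q)$, which is slice preserving semiregular by Lemma \ref{lem-commutative-M_s}, and check on a single slice that it has neither a pole nor a zero at $x$. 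With either correction, your part (i) and your uniqueness argument can stand as written.
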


Corollaries \ref{cor-to-cor5.27} and \ref{cor-to-cor5.27-1} motivates the following notion of \textbf{spherical order} of a slice preserving semiregular function at a point in its domain. This notion will play a key role in a notion of spherical divisor that we will introduce later.

\begin{definition}
\label{def-spherical-order}

Let $f$ be a slice preserving semiregular function on a symmetric slice domain $D$ such that $f \not\equiv 0$, and let $p = x + Iy \in D$ for some $x, y \in \bR$ and $I \in \bS$. If $y \ne 0$, the \textbf{spherical order of $f$ at $p$} is the unique integer $\ell$ such that
\begin{align*}
f(q) = ((q - x)^2 + y^2)^{\ell} g(q)
\end{align*}
for some slice preserving semiregular function on $D$ that have neither poles nor zeros in $[x + \bS y]$. 

If $y = 0$, the \textbf{spherical order of $f$ at $p = x \in \bR$} is the unique integer $\ell$ such that
\begin{align*}
f(q) = (q - x)^{\ell} g(q)
\end{align*}
for some slice preserving semiregular function on $D$ that have neither poles nor zeros at $p = x$.

We denote by $\sord_f(p)$ the spherical order of $f$ at $p$.

By Corollaries \ref{cor-to-cor5.27} and \ref{cor-to-cor5.27-1}, such a unique integer $\sord_f(p)$ exists.

\end{definition}

Letting $p$ range over $D$, one obtains a map $\sord_f : D \to \bZ$ that sends each $p \in D$ to a unique integer $\sord_f(p)$.

The next results follow immediately from Corollaries \ref{cor-to-cor5.27} and \ref{cor-to-cor5.27-1}.

\begin{proposition}
\label{prop-eqn-between-sord-and-ord}

Let $f \in \cM_s(D)$ for some symmetric slice domain $D$ in $\bH$ such that $f \not\equiv 0$, and let $p = x +Iy$ be an element in $D$ for some $x, y \in \bR$ and $I \in \bS$. If $\max(\ord_f(p), \ord_f(\bar{p})) > 0$, then the following are true.
\begin{itemize}

\item[(i)] $\ord_f(p) = \ord_f(\bar{p})$.

\item [(ii)] every point in $[x + \bS y]$ is a pole of $f$ of order $\ord_f(p)$.

\item [(iii)] $\sord_f(p) = -\ord_f(p)$.

\end{itemize}

\begin{remark}

Note that we allow $y = 0$ in the above proposition. In that case, $[x + \bS y]$ is the singleton set $\{x\}$, and $\bar{p} = p$.

\end{remark}

\end{proposition}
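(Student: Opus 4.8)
The plan is to reduce everything to Corollaries \ref{cor-to-cor5.27} and \ref{cor-to-cor5.27-1}, treating the cases $y \neq 0$ and $y = 0$ separately. Write $m = \ord_f(p)$ and $n = \ord_f(\bar{p})$, and recall that by hypothesis $\max(m, n) > 0$. The guiding observation is that parts (i) and (iii) fall out almost immediately once the correct factorization of $f$ is in hand, whereas part (ii) requires one extra idea: that the spherical order $\sord_f$ of Definition \ref{def-spherical-order} depends only on the sphere $[x + \bS y]$ and not on the chosen representative $p = x + Iy$.

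Suppose first that $y \neq 0$. Since $\max(m, n) > 0$, Corollary \ref{cor-to-cor5.27}(ii) applies and yields at once $m = n$, which is part (i), together with a factorization $f(q) = ((q - x)^2 + y^2)^{-n} g(q)$ in which $g \in \cM_s(D)$ has neither poles nor zeros in $[x + \bS y]$. Comparing this with Definition \ref{def-spherical-order} shows that $-n$ is precisely the integer characterizing the spherical order, so $\sord_f(p) = -n = -\ord_f(p)$, which is part (iii).

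For part (ii), I would exploit that the defining data of $\sord_f(x + Iy)$ in Definition \ref{def-spherical-order} --- the factor $((q - x)^2 + y^2)^{\ell}$ and the requirement that the cofactor have no poles or zeros on $[x + \bS y]$ --- involve only $x$, $y$ and the sphere $[x + \bS y]$, not the imaginary unit $I$. Hence $\sord_f$ is constant along $[x + \bS y]$, so for every $J \in \bS$ we have $\sord_f(x + Jy) = \sord_f(p) = -n < 0$. In particular $\max(\ord_f(x + Jy), \ord_f(x - Jy)) > 0$, since otherwise Corollary \ref{cor-to-cor5.27}(i) applied to the representative $x + Jy$ would produce a nonnegative spherical order. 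Corollary \ref{cor-to-cor5.27}(ii) applied to $x + Jy$ then yields $\ord_f(x + Jy) = \ord_f(x - Jy)$ and a factorization $f(q) = ((q - x)^2 + y^2)^{-\ord_f(x + Jy)} g'(q)$; comparing this with Definition \ref{def-spherical-order} forces $-\ord_f(x + Jy) = \sord_f(x + Jy) = -n$, so $\ord_f(x + Jy) = n = \ord_f(p) > 0$. As $J$ ranges over $\bS$, this shows every point of $[x + \bS y]$ is a pole of $f$ of order $\ord_f(p)$, proving part (ii).

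Finally, when $y = 0$ we have $p = \bar{p} = x \in D \cap \bR$, so part (i) is automatic and $\max(m, n) = \ord_f(x) > 0$. Here Corollary \ref{cor-to-cor5.27-1}(ii) supplies the factorization $f(q) = (q - x)^{-n} g(q)$ with $g$ free of poles and zeros at $x$; comparison with the $y = 0$ clause of Definition \ref{def-spherical-order} gives $\sord_f(p) = -n = -\ord_f(p)$, which is part (iii), while part (ii) holds because $[x + \bS y] = \{x\}$ is the single point $x$, a pole of order $n$ by the factorization. The only step demanding any thought is the sphere-invariance of $\sord_f$ used in part (ii); everything else is a direct appeal to the two corollaries and to the definition of spherical order.
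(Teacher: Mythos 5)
Your proof is correct and takes the same route as the paper: the paper offers no detailed argument, stating only that the proposition ``follows immediately from Corollaries \ref{cor-to-cor5.27} and \ref{cor-to-cor5.27-1},'' and your write-up is precisely that reduction carried out in full. The one point you rightly flag as needing thought --- that $\sord_f$ depends only on the sphere $[x + \bS y]$ and not on the representative, so the corollary can be re-applied at each $x + Jy$ to get part (ii) --- is exactly the content of the paper's subsequent Proposition \ref{prop-constant-sord-to-each-sphere}, so your treatment is consistent with the paper's intent.
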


The following result follows immediately from Corollaries \ref{cor-to-cor5.27} and \ref{cor-to-cor5.27-1}.

\begin{proposition}
\label{prop-constant-sord-to-each-sphere}

Let $f \in \cM_s(D)$ for some symmetric slice domain $D$ such that $f \not\equiv 0$. For each $[x + \bS y] \subset D$ with $x, y \in \bR$, the restriction of $\sord_f$ to $[x + \bS y]$ is constant, i.e., $\sord_f(x + Iy) = \sord_f(x + Jy)$ for all $I, J \in \bS$.

\end{proposition}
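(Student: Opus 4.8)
The plan is to observe that the defining property of the spherical order at a point $p = x + Iy$ makes no reference whatsoever to the imaginary unit $I$: it is phrased entirely in terms of the real numbers $x, y$ and the sphere $[x + \bS y]$. Once this is made explicit, constancy of $\sord_f$ along each sphere is forced immediately by the uniqueness assertions already secured in Corollaries \ref{cor-to-cor5.27} and \ref{cor-to-cor5.27-1}. So the whole argument is a matter of unwinding Definition \ref{def-spherical-order}, with no new computation required.

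First I would dispose of the degenerate case $y = 0$, where $[x + \bS y]$ is the singleton $\{x\}$; here the restriction of any map to a one-point set is trivially constant. For $y \ne 0$, I fix two points $p = x + Iy$ and $p' = x + Jy$ of the same sphere, with $I, J \in \bS$. By Definition \ref{def-spherical-order}, $\sord_f(p)$ is the unique integer $\ell$ for which there exists a slice preserving semiregular function $g$ on $D$, having neither poles nor zeros in $[x + \bS y]$, with $f(q) = ((q - x)^2 + y^2)^{\ell} g(q)$. The crucial point is that every ingredient of this characterization — the factor $((q - x)^2 + y^2)^{\ell}$, the integer $\ell$, and the requirement imposed on $g$ over the whole sphere $[x + \bS y]$ — is determined by $x$ and $y$ alone. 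Since $p'$ shares the real part $x$, the modulus $y$ of its imaginary part, and hence the very same sphere $[x + \bS y]$, the verbatim condition defining $\sord_f(p)$ is also the condition defining $\sord_f(p')$. By the uniqueness of $\ell$ guaranteed by the two corollaries, $\sord_f(p) = \sord_f(p')$, and as $I, J$ were arbitrary this is exactly the asserted constancy of $\sord_f$ on $[x + \bS y]$.

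There is essentially no obstacle here: the entire force of the statement has already been absorbed into Corollaries \ref{cor-to-cor5.27} and \ref{cor-to-cor5.27-1}, whose existence-and-uniqueness clauses do all the work, and the case split on whether $\max(\ord_f(p), \ord_f(\bar p))$ vanishes is not even needed for the direct argument. The only point that warrants care is to confirm that the defining condition for $\sord_f$ genuinely involves no data beyond $x$, $y$ and the sphere $[x + \bS y]$ — which is manifest from Definition \ref{def-spherical-order} — so that a single uniqueness statement applies simultaneously to every point of the sphere. This proposition is therefore best read as recording the $I$-independence already built into the notion of spherical order.
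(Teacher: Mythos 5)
Your proposal is correct and matches the paper's approach: the paper derives this proposition as an immediate consequence of Corollaries \ref{cor-to-cor5.27} and \ref{cor-to-cor5.27-1}, and your argument simply makes explicit why this is immediate, namely that the defining condition for $\sord_f$ in Definition \ref{def-spherical-order} depends only on $x$, $y$, and the sphere $[x + \bS y]$, so the uniqueness clause of the corollaries forces the same integer at every point of the sphere. Nothing is missing.
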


\begin{lemma}
\label{l-main-lemma1}

Let $f$ be a slice preserving semiregular function in $\cM_s(D)$ for some symmetric slice domain $D$ such that $f \not\equiv 0$. Write
\begin{align*}
D = \cup_{i \in \cI} \bS_i,
\end{align*}
where for each $i \in \cI$, $\bS_i = x_i + y_i\bS$ for some $x_i, y_i \in \bR$ such that $x_i + y_iI_i \in D$ for some $I_i \in \bS$ and $\bS_i \cap \bS_j = \emptyset$. For every $q \in D$ with $\sord_f(q) \ne 0$, there exists a symmetric slice neighborhood of $q$, say $U_q \subset D$ such that there are only finitely many spheres $\bS_j$ for which the restriction of $\sord_f$ to $U_q \cap \bS_j$ is nonzero.

\end{lemma}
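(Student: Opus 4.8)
The plan is to show that the spherical order map $\sord_f$, viewed as a function on the sphere-decomposition $D=\cup_{i\in\cI}\bS_i$, is \emph{locally finitely supported}: near any sphere $\bS_q$ where $\sord_f$ is nonzero, only finitely many neighboring spheres carry a nonzero value. The key observation is that nonzero values of $\sord_f$ occur precisely at spheres that are either zeros or poles of $f$, by Proposition \ref{prop-eqn-between-sord-and-ord} and Definition \ref{def-spherical-order}. So the statement reduces to a discreteness (non-accumulation) property for the zero-and-pole set of a slice preserving semiregular function, which I expect to follow from the Identity Principle together with the semiregularity hypothesis.

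\emph{First} I would fix $q\in D$ with $\sord_f(q)\neq 0$; write $q=x+Iy$ and let $\bS_q=[x+\bS y]$ be its sphere. Since $f$ is semiregular on $D$, there is a symmetric slice domain $A\subset D$ on which $f$ is slice preserving regular, and $D\setminus A$ consists only of poles or removable singularities. The sphere $\bS_q$ is either contained in $A$ (so it is a zero-sphere of the regular function $f$, by Lemma \ref{lem-zeros-of-slice-preserving-regular-func}) or it lies in $D\setminus A$ (so it is a pole-sphere). \emph{Next} I would pass to the single complex line $\bC_I$: the restriction $f_I:D\cap\bC_I\to\bC_I$ is a meromorphic function in the classical sense on the planar domain $D\cap\bC_I$. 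By Proposition \ref{prop-constant-sord-to-each-sphere} and Proposition \ref{prop-eqn-between-sord-and-ord}, the value $\sord_f$ on each sphere $\bS_j=[x_j+\bS y_j]$ is detected by the zero/pole order of $f_I$ at the two points $x_j\pm I y_j$ of $\bC_I$ (or at $x_j$ when $y_j=0$). Thus the set of spheres with $\sord_f\neq 0$ corresponds exactly to the set of zeros and poles of the single-variable meromorphic function $f_I$ in $\bC_I$.

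\emph{Then} I would invoke the classical fact that the zeros and poles of a meromorphic function on a planar domain form a discrete set with no accumulation point inside the domain (this is the complex-analytic input, guaranteed because $f_I\not\equiv 0$ by the Identity Principle, Theorem \ref{thm1.12-gss}). Consequently there is a planar disk $V\subset D\cap\bC_I$ centered at $q$ meeting only finitely many zeros and poles of $f_I$. I would let $U_q$ be the symmetric slice neighborhood of $q$ obtained by \emph{symmetrizing} $V$, i.e.\ $U_q=\{x'+Jy':x'+Iy'\in V,\ J\in\bS\}$, which is a symmetric slice domain contained in $D$. Each sphere $\bS_j$ meeting $U_q$ intersects $\bC_I$ in the point(s) $x_j\pm Iy_j$, and by construction only finitely many of these points are zeros or poles of $f_I$; hence only finitely many $\bS_j$ have $\sord_f\neq 0$ on $U_q\cap\bS_j$.

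\emph{The main obstacle} I anticipate is bookkeeping the correspondence between spheres in $\bH$ and points in $\bC_I$ carefully: a generic sphere $\bS_j$ with $y_j\neq 0$ meets $\bC_I$ in the two conjugate points $x_j\pm Iy_j$, so one must check that a single planar neighborhood $V$ symmetrizes to a neighborhood $U_q$ capturing \emph{all} spheres near $\bS_q$ without missing any, and that finiteness in $\bC_I$ transfers to finiteness of spheres. This is handled by using Proposition \ref{prop-constant-sord-to-each-sphere}, which guarantees $\sord_f$ is constant on each sphere, so that testing on $\bC_I$ loses no information. The remaining verification—that $U_q$ is genuinely a symmetric slice domain and that the finitely-many-points-to-finitely-many-spheres passage is valid—is routine once the correspondence is set up.
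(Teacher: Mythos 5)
Your proposal is correct and takes essentially the same route as the paper: both reduce to a fixed complex line $\bC_I$, identify the spheres with nonzero spherical order with the zeros and poles of the classical meromorphic restriction $f_I$ (via Definition \ref{def-spherical-order} and Propositions \ref{prop-eqn-between-sord-and-ord}, \ref{prop-constant-sord-to-each-sphere}), and invoke non-accumulation of those zeros and poles (Identity Principle for zeros, discreteness of poles of a meromorphic function). The only difference is presentational: the paper argues by contradiction, extracting a convergent sequence of zeros or poles inside a symmetric compact set, whereas you argue directly by symmetrizing a planar disk that contains only finitely many zeros and poles of $f_I$.
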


\begin{proof}

Take an arbitrary $q_0 \in D$. Assume the contrary, i.e, for any slice symmetric neighborhood $U$ of $q_0$, there exist infinitely many $\bS_j$ such that the restriction of $\sord_f$ to $U \cap \bS_j$ is nonzero. Take a symmetric compact subset $K$ of $D$ that contains $q_0$. We know from Proposition \ref{prop-constant-sord-to-each-sphere} that the restriction to $\sord_f$ to $\bS_i$ is constant for every $i \in \cI$. Thus there exist infinitely many spheres, say $(\bS_{n})_{n \in A}$, where $A$ is an infinite subset of $\bZ$ such that $\sord_f(q) \ne 0$ for any $q \in K \cap \bS_{n}$ for any $n \in A$. Since the restriction of $\sord_f$ to $K \cap \bS_{n}$ is nonzero, $K \cap \bS_{n}$ is nonempty. Thus $\bS_{n} \subset K$ for every $n \in A$ since $K$ is symmetric.

Set
\begin{align*}
B = \{n \in A \; |\; y_{n} = 0\},
\end{align*}
and
\begin{align*}
C = \{n \in A \; | \;  y_n \ne 0\}.
\end{align*}

Fix an element $I \in \bS$. By Corollaries \ref{cor-to-cor5.27} and \ref{cor-to-cor5.27-1}, for each $n \in B$, there exists a unique slice preserving semiregular function $\beta_{n}$ on $D$ that have neither zeros nor poles at $x_n$ such that
\begin{align}
\label{e1-main-lem1}
f(q) = (q - x_{n})^{\sord(x_{n})} \beta_{n}(q).
\end{align}

Similarly, for each $n \in C$, there exists a unique slice preserving semiregular function $\alpha_{n}$ on $D$ that have neither zeros nor poles in $[x_{n} + \bS y_{n}]$  such that
\begin{align}
\label{e1-main-lem2}
f(q) &= ((q - x_{n})^2 + y_{n}^2)^{\sord_f([x_{n} + y_{n}\bS])} \alpha_{n}(q) \nonumber \\
&= (q - (x_{n} + y_{n}I))^{\sord_f([x_{n} + y_{n}\bS])}(q - (x_{n} - y_{n}I))^{\sord_f([x_{n} + y_{n}\bS])}\alpha_{n}(q).
\end{align}

$\star$ \textit{Case 1. There exist infinitely many spheres, say $(\bS_{n})_{n \in \cJ}$, where $\cJ$ is an infinite subset of $A$ such that $\sord_f(q) > 0$ for any $q \in K \cap \bS_{n}$ for any $n \in \cJ$. }

Since $A = B \cup C$, it follows that either $\cJ \cap B$ or $\cJ \cap C$ is infinite. 

Suppose first that $\cJ \cap B$ is infinite. By (\ref{e1-main-lem1}), every real number $\{x_n\}_{n \in \cJ \cap B}$ is a zero of $f$. Since $x_n \in K$ and $x_n \in \bR \subset \bC_I$ for every $n \in \cJ \cap B$ and $K \cap \bC_I$ is a compact subset of $D \cap \bC_I$, there exists a subsequence $\{x_{i_n}\}_{n \in \cL}$ for some infinite subset $\cL$ of $\cJ \cap B$ such that the subsequence $\{x_{i_n}\}_{n \in \cL}$ converges to a limit point in $K \cap \bC_I$. Thus the zero set of $f$ contains a convergent sequence of elements in $K \cap \bC_I$, and therefore by the Identity Theorem (see Theorem \ref{thm1.12-gss}), $f \equiv 0$, which is a contradiction. 

Suppose now that $\cJ \cap C$ is infinite. From (\ref{e1-main-lem2}), we deduce that every term in the sequence $\{x_{n} + y_{n}I\}_{n \in \cJ \cap C}$ are zeros of $f$. Since $K \cap \bC_I$ is a compact subset of $D \cap \bC_I$, and $x_{n} + y_{n}I \in K \cap \bC_I$ for every $n \in \cJ \cap \bC_I$, there exists a subsequence $\{x_{i_n} + y_{i_n}I\}_{n \in \cL}$ for some infinite subset $\cL$ of $\cJ \cap \bC_I$ such that $\{x_{i_n} + y_{i_n}I\}_{n \in \cL}$ converges to a limit point in $K \cap \bC_I$. Thus the zero set of $f$ contains a convergent sequence of elements in $K \cap \bC_I$, and therefore by the Identity Theorem (see Theorem \ref{thm1.12-gss}), $f \equiv 0$, which is a contradiction.

$\star$ \textit{Case 2. There exist infinitely many spheres, say $(\bS_{n})_{n \in \cH}$, where $\cH$ is an infinite subset of $A$ such that $\sord_f(q) < 0$ for any $q \in K \cap \bS_{n}$ for any $n \in \cH$ }

Set, for each $n \in \cH$,
\begin{align*}
q_{n} = x_{n} + y_{n}I \in K \cap \bC_I.
\end{align*}
By Proposition \ref{prop-eqn-between-sord-and-ord}, $\ord_f(q_n) = - \sord_f(q_n) > 0$, and thus $q_n$ is a pole of $f$ of order $\ord_f(q_n)$ for every $n \in \cH$. Thus every term in the sequence $\{q_n\}_{n \in \cH}$ is a pole of the meromorphic function $f_I : D \cap \bC_I \to \bC_I$, where $f_I$ is the restriction of $f$ to $D \cap \bC_I$. Since $K \cap \bC_I$ is a compact subset of $D \cap \bC_I$, there exists a convergent subsequence, say $\{q_n\}_{n \in \cL}$ in $K \cap \bC_I$ for some infinite subset $\cL$ of $\cH$ whose limit point also belongs in $K \cap \bC_I$. Since $f_I : D\cap \bC_I \to \bC_I$ is a meromorphic function on $D \cap \bC_I$, the set of its poles in $D \cap \bC_I$ is discrete, which is a contradiction to the fact that the convergent sequence $\{q_n\}_{n \in \cL}$ belongs in $K \cap \bC_I$. 

From \textit{Cases 1 and 2}, the lemma follows immediately.

\end{proof}

For a slice preserving semiregular function $f$ on a symmetric slice domain $D$, we denote by $\divi(f)$ the map from $D$ to $\bZ$ which sends each element $q \in D$ to $\sord_f(q)$, i.e., $\divi(f)(q) = \sord_f(q)$. Such divisors are called \textbf{principal spherical divisors} on $D$. The next result is immediate from the above lemma and Proposition \ref{prop-constant-sord-to-each-sphere}.

\begin{corollary}
\label{cor-main-c1}

For every slice preserving semi regular function $f$ on a symmetric slice domain $D$ in $\bH$, the map $\divi(f) : D \to \bZ$ is a spherical divisor on $D$.

\end{corollary}

A spherical divisor $\fd$ on a symmetric slice domain $D$ is called \textit{positive} and written $\fd \ge 0$ if $\fd(z) \ge 0$ for all $z \in D$. The following result is obvious.

\begin{lemma}

For every slice preserving regular function $f$ on a symmetric slice domain $D$, the principal spherical divisor $\divi(f)$ is positive.

\end{lemma}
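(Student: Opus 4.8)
The plan is to reduce the statement to Lemma \ref{lem-Zorn-for-zero} together with the definition of spherical order. Since $\divi(f)(q) = \sord_f(q)$ by definition, and since $\sord_f$ is only defined when $f \not\equiv 0$, I assume throughout that $f \not\equiv 0$; the assertion $\divi(f) \ge 0$ then amounts to showing $\sord_f(p) \ge 0$ for every $p \in D$. The guiding idea is that a \emph{regular} function has no poles, so the only mechanism that can produce a spherical order is a zero on a sphere $[x + \bS y]$, and zeros can only contribute nonnegative orders.

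First I would fix an arbitrary point $p = x + Iy \in D$ with $x, y \in \bR$ and $I \in \bS$, and split into the two cases $y \ne 0$ and $y = 0$. In the case $y \ne 0$, I apply part (i) of Lemma \ref{lem-Zorn-for-zero} to produce a largest integer $\ell$ and a slice preserving regular function $g : D \to \bH$ with no zeros in $[x + \bS y]$ such that $f(q) = ((q - x)^2 + y^2)^{\ell} g(q)$. In the case $y = 0$, I apply part (ii) to produce a largest integer $\ell$ and a slice preserving regular $g$ with $g(x) \ne 0$ such that $f(q) = (q - x)^{\ell} g(q)$. In either case, because $g$ is \emph{regular}, it has no poles anywhere, and by construction it has no zeros in $[x + \bS y]$ (respectively at $x$); hence $g$ has neither poles nor zeros on the relevant sphere.

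Next I would match this factorization against Definition \ref{def-spherical-order}. The factorization just produced is exactly of the form appearing in that definition, with $g$ a slice preserving semiregular function having neither poles nor zeros in $[x + \bS y]$ (respectively at $x$). By the uniqueness of the defining integer, guaranteed by Corollaries \ref{cor-to-cor5.27} and \ref{cor-to-cor5.27-1}, this forces $\sord_f(p) = \ell$. Finally, inspecting the statement and proof of Lemma \ref{lem-Zorn-for-zero} shows that the integer $\ell$ it produces is always nonnegative: when $f(p) \ne 0$ one may take $\ell = 0$, and when $f$ vanishes on $[x + \bS y]$ the exponent $\ell$ is a \emph{positive} integer. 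Thus $\sord_f(p) = \ell \ge 0$, and since $p$ was arbitrary, $\divi(f) \ge 0$.

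The only genuinely non-automatic points, which I expect to be the main obstacle, are (a) verifying that the exponent $\ell$ supplied by Lemma \ref{lem-Zorn-for-zero} is nonnegative rather than merely an integer, and (b) justifying the identification $\sord_f(p) = \ell$. Point (a) is settled by reading off the two cases in the proof of Lemma \ref{lem-Zorn-for-zero}, and point (b) rests on the uniqueness clause of Corollaries \ref{cor-to-cor5.27} and \ref{cor-to-cor5.27-1}, together with the observation that a regular factor automatically has no poles; everything else is definitional.
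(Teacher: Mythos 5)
Your proof is correct; the paper states this lemma without proof (calling it obvious), and your argument via Lemma \ref{lem-Zorn-for-zero} together with the uniqueness clause of Corollaries \ref{cor-to-cor5.27} and \ref{cor-to-cor5.27-1} is exactly the intended fleshing-out, including the key observation that a regular factor automatically has no poles. One small streamlining: since $f$ is regular it has no poles at all, so $\ord_f(p) = \ord_f(\bar{p}) = 0$ for every $p \in D$, and Corollary \ref{cor-to-cor5.27}(i) (respectively Corollary \ref{cor-to-cor5.27-1}(i) at real points) already asserts that the unique exponent in the defining factorization is $\ge 0$, which lets you conclude $\sord_f(p) \ge 0$ without inspecting the two cases inside the proof of Lemma \ref{lem-Zorn-for-zero}.
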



Recall from Theorem \ref{thm-M_s-is-a-field} that $\cM_s(D)$ is a field with respect to the usual addition and multiplication ``$+$'' and ``$\cdot$".  Let $\cM_s(D)^{\times}$ denote its abelian multiplicative group. We introduce a map $\Gamma : \cM_s(D)^{\times} \to \Div(D)$ which is defined by sending each slice preserving semiregular function $f \in \cM_s(D)$ to the principal spherical divisor $\divi(f) \in \Div(D)$. The following is immediate from Corollaries \ref{cor-to-cor5.27} and \ref{cor-to-cor5.27-1}.

\begin{lemma}
\label{lem-holomorphy-criterion}

\begin{itemize}

\item[]
\item [(i)] $\Gamma$ is a group homomorphism.

\item [(ii)] $f \in \cM_s(D)^{\times}$ is regular in $D$ if and only if $\divi(f) \ge 0$;

\item [(iii)] $f \in \cM_s(D)^{\times}$ is a unit in $\cO_s(D)$ if and only if $\divi(f) = 0$. 

\end{itemize}

\end{lemma}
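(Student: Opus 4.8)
The plan is to verify each of the three assertions directly from the additivity of spherical orders, which in turn follows from Corollaries \ref{cor-to-cor5.27} and \ref{cor-to-cor5.27-1}. For part (i), I would first establish that $\sord$ is additive over the $\star$-product (equivalently the usual multiplication, by Corollary \ref{cor-comm-group-O_s}), i.e. $\sord_{fg}(p) = \sord_f(p) + \sord_g(p)$ for every $p \in D$. To see this at a point $p = x + Iy$ with $y \ne 0$, I would use the factorizations $f(q) = ((q-x)^2+y^2)^{\sord_f(p)} \tilde f(q)$ and $g(q) = ((q-x)^2+y^2)^{\sord_g(p)} \tilde g(q)$ furnished by Definition \ref{def-spherical-order}, where $\tilde f, \tilde g$ have neither zeros nor poles on $[x+\bS y]$. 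Multiplying these gives
\begin{align*}
f(q)g(q) = ((q-x)^2+y^2)^{\sord_f(p)+\sord_g(p)} \tilde f(q)\tilde g(q),
\end{align*}
and since $\tilde f \tilde g$ is again slice preserving semiregular with no zeros or poles on $[x+\bS y]$ (a product of nonvanishing pole-free functions on a sphere stays nonvanishing and pole-free there, which I would justify using Lemma \ref{lem-zeros-of-product-of-func} and the fact that poles of a product occur only at poles of a factor), the uniqueness clause of Definition \ref{def-spherical-order} forces $\sord_{fg}(p) = \sord_f(p) + \sord_g(p)$. The case $y = 0$ is identical with $(q-x)^2+y^2$ replaced by $q-x$. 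This additivity says precisely that $\divi(fg)(p) = \divi(f)(p) + \divi(g)(p)$ for all $p$, hence $\divi(fg) = \divi(f) + \divi(g)$ in the additive group $\Div(D)$, which is the statement that $\Gamma$ is a homomorphism.

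For part (ii), the direction $f$ regular $\Rightarrow \divi(f) \ge 0$ is immediate: if $f \in \cO_s(D)$ then $f$ has no poles, so at every $p$ the factorization in Definition \ref{def-spherical-order} has a pole-free cofactor and the exponent $\sord_f(p)$ records a zero-order, which is $\ge 0$ by Lemma \ref{lem-Zorn-for-zero} (the largest integer $\ell$ there is nonnegative). Conversely, if $\divi(f) \ge 0$, then $f$ has no point of negative spherical order; by Proposition \ref{prop-eqn-between-sord-and-ord}(iii), a pole at $p$ would give $\sord_f(p) = -\ord_f(p) < 0$, so $f$ has no poles at all on $D$, i.e. every point of $D$ is a point of regularity or a removable singularity, whence $f$ extends to a slice preserving regular function on all of $D$ and lies in $\cO_s(D)$.

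For part (iii), I would combine (i) and (ii). If $f$ is a unit in $\cO_s(D)$, then both $f$ and $f^{-1}$ are regular, so by (ii) both $\divi(f) \ge 0$ and $\divi(f^{-1}) = -\divi(f) \ge 0$ (using the homomorphism property from (i) applied to $f \cdot f^{-1} = 1$, noting $\divi(1) = 0$), forcing $\divi(f) = 0$. Conversely, if $\divi(f) = 0$, then by (ii) $f$ is regular; moreover $f$ has no zeros, since a zero at $p$ would give $\sord_f(p) > 0$ by Lemma \ref{lem-Zorn-for-zero} together with Definition \ref{def-spherical-order}. Then Lemma \ref{lem-inverse-of-reg-func-in-O_s} shows $1/f$ is slice preserving regular on $D \setminus \cZ(f) = D$, so $f^{-1} \in \cO_s(D)$ and $f$ is a unit. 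The main obstacle I anticipate is the careful bookkeeping in part (i): verifying that the cofactor $\tilde f \tilde g$ genuinely has neither zeros nor poles on the relevant sphere, so that the uniqueness statement of Definition \ref{def-spherical-order} can be invoked to pin down $\sord_{fg}(p)$ exactly rather than merely bound it. Everything else is a direct translation between the analytic language of poles and zeros (via Proposition \ref{prop-eqn-between-sord-and-ord}) and the divisor-theoretic language.
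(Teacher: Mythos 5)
Your proposal is correct and takes essentially the same approach as the paper: the paper gives no written proof, asserting the lemma is immediate from Corollaries \ref{cor-to-cor5.27} and \ref{cor-to-cor5.27-1}, and your argument is exactly a detailed verification of that claim, with part (i) obtained by multiplying the two spherical-order factorizations and invoking the uniqueness clause of Definition \ref{def-spherical-order}, and parts (ii)--(iii) obtained by translating poles and zeros into signs of $\sord_f$ via Proposition \ref{prop-eqn-between-sord-and-ord} and Lemmas \ref{lem-Zorn-for-zero} and \ref{lem-inverse-of-reg-func-in-O_s}. The bookkeeping you flag as the main obstacle (that $\tilde f\tilde g$ has neither zeros nor poles on the sphere) does go through, since slice preserving functions take values in a single complex line, where nonzero times nonzero is nonzero, and poles of a product lie among the poles of the factors.
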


\section{Weierstrass factorization theorem}
\label{sec-weierstrass}

In this section, we will prove that the quotient field $\cQ(\cO_s(\bH))$ of $\cO_s(\bH)$ is $\cM_s(\bH)$, which provides an evidence to Conjecture \ref{main-conj}. Our proof will be based on a quaternionic analogue of the Weierstrass factorization theorem for entire functions on $\bH$ that is due to Gentili and Vignozzi \cite{gv} (see also \cite[Theorem 4.34]{GSS}). In fact, we only need a special case of the theorem of Gentili and Vignozzi's for slice preserving entire functions on $\bH$.

\begin{lemma}
\label{lem-nth-roots-of-exp}

Let $P(q) = a_nq^n + a_{n -1}q^{n - 1} + \cdots+ a_1q + a_0 $ be a polynomial of degree $n$ in variable $q$ over $\bH$, where the $a_i$ are in $\bR$ such that $a_n \ne 0$. Then
\begin{itemize}

\item [(i)] the function $e^{P(q)} := \exp(P(q))$ is a slice preserving regular function in $\cO_s(\bH)$.

\item [(ii)] for every positive integer $\ell$, there exists a slice preserving regular function $Q_{\ell} \in \cO_s(\bH)$ such that
\begin{align*}
Q_{\ell}(q)^{\ell}= e^{P(q)},
\end{align*}
that is, $Q_{\ell}$ is an $\ell$-th root of $e^{P(q)}$.

\end{itemize}

\end{lemma}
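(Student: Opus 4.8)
The plan is to prove both parts by reducing everything to the classical theory of holomorphic functions on each complex line $\bC_I$, exploiting the fact that the coefficients $a_i$ of $P$ are \emph{real}. First I would establish part (i). Fix $I \in \bS$ and consider the restriction of $e^{P(q)}$ to $\bC_I$. Since the $a_i$ lie in $\bR \subset \bC_I$, for $z \in \bC_I$ the polynomial $P(z)$ takes values in $\bC_I$ (because $\bC_I$ is a field containing $\bR$ and closed under the power operations $z \mapsto z^n$). Hence the classical complex exponential $\exp(P(z)) = \sum_{k \ge 0} P(z)^k / k!$ converges to an element of $\bC_I$ and defines a holomorphic function $D \cap \bC_I = \bC_I \to \bC_I$. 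This shows the restriction is slice preserving on each $\bC_I$; I would then invoke Lemma \ref{lem-extension} together with the observation that these restrictions agree with a single globally defined regular function to conclude $e^{P(q)} \in \cO_s(\bH)$. The key point throughout is that reality of the coefficients forces $P$ to preserve every slice, so that $\exp \circ P$ does too.

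Next I would prove part (ii), the existence of $\ell$-th roots. The natural candidate is
\begin{align*}
Q_{\ell}(q) := \exp\!\left(\tfrac{1}{\ell} P(q)\right) = e^{P(q)/\ell}.
\end{align*}
Since $\tfrac{1}{\ell}P(q)$ is again a polynomial with real coefficients (of the same degree $n$, with leading coefficient $a_n/\ell \ne 0$), part (i) applied to $\tfrac{1}{\ell}P$ shows immediately that $Q_{\ell} \in \cO_s(\bH)$. It then remains to verify the identity $Q_{\ell}(q)^{\ell} = e^{P(q)}$, where the $\ell$-th power is the ordinary $\star$-power, which by Corollary \ref{cor-comm-group-O_s} coincides with the usual pointwise product since all functions involved are slice preserving. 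I would check this slice by slice: on each $\bC_I$ the restriction reduces to the classical complex identity $\big(\exp(w/\ell)\big)^{\ell} = \exp(w)$ with $w = P(z) \in \bC_I$, which holds because $\bC_I \cong \bC$ and the complex exponential satisfies $\exp(a)\exp(b) = \exp(a+b)$. Having verified the identity on a single complex line where both sides are regular, the Identity Principle (Theorem \ref{thm1.12-gss}) promotes it to all of $\bH$.

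The one step requiring genuine care, rather than a routine appeal to complex analysis, is the passage between the pointwise $\ell$-th power and the $\star$-power, and the verification that $Q_{\ell}$ is globally well defined and regular rather than merely slice-by-slice holomorphic. The resolution is precisely that $Q_{\ell}$ is slice preserving, so Corollary \ref{cor-comm-group-O_s} lets me replace $Q_{\ell}^{\star \ell}$ by the honest product $Q_{\ell} \cdots Q_{\ell}$, and the restriction of this product to $\bC_I$ is the pointwise $\ell$-th power of the scalar holomorphic function $\exp(P(z)/\ell)$. I anticipate the main obstacle is purely bookkeeping: making sure the exponential $\exp(P(q))$, defined a priori by its power series in the noncommutative algebra $\bH$, really restricts on each slice to the scalar complex exponential. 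This is transparent once one notes that $P(q)$ and all its powers commute (they are polynomials in the single element $q$ with real coefficients), so the series manipulations are identical to the commutative complex case, and no ordering subtleties of the $\star$-product intervene.
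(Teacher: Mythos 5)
Your proposal is correct and follows essentially the same route as the paper: the candidate root is the identical function $Q_{\ell}(q)=\exp\!\left(P(q)/\ell\right)$, slice preservation comes from the reality of the coefficients, and the identity $Q_{\ell}^{\ell}=e^{P}$ reduces to the commutative exponential law since all functions involved lie in $\cO_s(\bH)$. The only difference is bookkeeping: where you re-derive regularity and slice preservation of $\exp\circ P$ by hand on each complex line $\bC_I$ (classical holomorphy of $\exp(P(z))$, Lemma \ref{lem-extension}, and the Identity Principle), the paper gets this in one stroke by noting $P,\,P/\ell\in\cO_s(\bH)$ and $\exp\in\cO_s(\bH)$ and invoking the composition lemma, Lemma \ref{lem-comp-of-slice-reg-func}.
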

\begin{proof}

We first verify part (i). Since the $a_i$ are in $\bR$, $P$ is a slice preserving regular function in $\cO_s(\bH)$. Recall that the exponential function $exp(q)$ is defined as
\begin{align*}
\exp(q) = e^q = \sum_{n \ge 0} \dfrac{q^n}{n!},
\end{align*}
which proves that $\exp(q)$ is a slice preserving regular function in $\cO_s(\bH)$ (see \cite[Section 4.2]{GSS} for the notion of $\exp(q)$). Thus by Lemma \ref{lem-comp-of-slice-reg-func}, the function $\exp(P(q)) = \exp \circ P (q)$ is a slice preserving regular function in $\cO_s(\bH)$, which proves part (i).

For part (ii), take an arbitrary positive integer $\ell$. Set
\begin{align*}
h_{\ell}(q) =\dfrac{ a_n}{\ell}q^n +\dfrac{a_{n - 1}}{\ell}q^{n - 1} + \cdots + \dfrac{ a_1}{\ell}q + \dfrac{a_0}{\ell}.
\end{align*}
We see that $h_{\ell}(q) =\dfrac{ P(q)}{\ell}$, which implies that $h_{\ell} \in \cO_s(\bH)$. It thus follows from Lemma \ref{lem-comp-of-slice-reg-func} that the function $Q_{\ell}(q) = \exp(h_{\ell}(q))$ is a slice preserving regular function in $\cO_s(\bH)$. We see that
\begin{align*}
Q_{\ell}(q)^{\ell} = \exp(h_{\ell}(q))^{\ell} = \exp(P(q)),
\end{align*}
which proves part (ii).

\end{proof}

We recall the following analogue of Weierstrass factors for the quaternions $\bH$.

\begin{definition}
\label{def-regular-factor}

For a given $a \in \bR \setminus \{0\}$, and $n \in \bZ_{\ge 1}$, let $g : \bH \to \bH$ be the function defined by
\begin{align*}
g_{n, a}(q) = e^{qa^{-1} + \dfrac{1}{2}q^2a^{-2} + \cdots + \dfrac{1}{n}q^n a^{-n}}.
\end{align*}
The function $g_{n, a}$ is called the \textbf{convergence-producing regular factor associated to $n$ and $a$}. For $n = 0$, simply set $g_{0, a}(q) = 1$.

\end{definition}

\begin{remark}

\begin{itemize}

\item []

\item [(i)] Since the polynomial $qa^{-1} + \dfrac{1}{2}q^2a^{-2} + \cdots + \dfrac{1}{n}q^n a^{-n}$ has real coefficients, it follows from Lemma \ref{lem-nth-roots-of-exp} that $g_{n, a}$ is a slice preserving regular function, and thus $g_{n, a} \in \cO_s(\bH)$.

\item [(ii)] It is known (see \cite{gv, GSS}) that $g_{n, a}$ has no zeros. 

\end{itemize}

\end{remark}

\begin{lemma}
\label{Weierstrass-lem}

Let $(a_n)_{n \in \bZ_{\ge 0}}$ be a sequence of nonzero real numbers such that $\lim_{n \to \infty} |a_n| = \infty$. For each $n \in \bZ_{\ge 0}$, let $g_{n, a_n}$ be the convergence-producing regular factor associated to $n$ and $a_n$. Then
\begin{align*}
\cP_{(a_n)_{n \in \bZ_{\ge 0}}}(q) = \prod^{\star}_{n \in \bZ_{\ge 0}}(1 - qa_n^{-1})\star g_{n, a_n}(q)  = \prod_{n \in \bZ_{\ge 0}} (1 - qa_n^{-1})g_{n, a_n}(q)
\end{align*}
converges compactly in $\bH$, and defines a slice preserving regular function in $\cO_s(\bH)$. Moreover, for every $n \in \bZ_{\ge 0}$,  the zero set of $\cP_{(a_n)_{n \in \bZ_{\ge 0}}}(q)$ is exactly the sequence $(a_n)_{n \ge \bZ_{\ge 0}}$.

\end{lemma}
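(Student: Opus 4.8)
The plan is to reduce the statement to the classical Weierstrass product theorem on a single complex line and then propagate both the convergence and the regularity to all of $\bH$ by means of the representation formula.

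First I would record that every factor lies in $\cO_s(\bH)$: the polynomials $1 - qa_n^{-1}$ have real coefficients, hence are slice preserving, while $g_{n, a_n} \in \cO_s(\bH)$ by the remark following Definition~\ref{def-regular-factor} (its exponent is a real-coefficient polynomial, to which Lemma~\ref{lem-nth-roots-of-exp} applies). Because all factors are slice preserving, Corollary~\ref{cor-comm-group-O_s} shows that the $\star$-product coincides with the usual product and that the factors commute with one another; thus the ordering of the partial products is immaterial, and each partial product $P_N(q) = \prod_{n=0}^{N}(1 - qa_n^{-1})g_{n, a_n}(q)$ is a well-defined element of $\cO_s(\bH)$. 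This already justifies the two equal expressions in the statement.

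Next, fix $I \in \bS$ and restrict to the complex line $\bC_I \cong \bC$. On $\bC_I$ the $n$th factor becomes exactly the classical Weierstrass elementary factor
\begin{align*}
E_n(z/a_n) = (1 - z/a_n)\exp\!\left(\tfrac{z}{a_n} + \tfrac{1}{2}\tfrac{z^2}{a_n^2} + \cdots + \tfrac{1}{n}\tfrac{z^n}{a_n^n}\right).
\end{align*}
I would then invoke the standard estimate $|1 - E_n(w)| \le |w|^{n+1}$ valid for $|w| \le 1$ (see \cite{rem2}). Since $|a_n| \to \infty$, on any disk $\{|z| \le r\}$ in $\bC_I$ we have $|z/a_n| \le 1/2$ for all large $n$, so the tail of $\sum_n |1 - E_n(z/a_n)|$ is dominated by $\sum_n 2^{-(n+1)}$ and the product $\prod_n E_n(z/a_n)$ converges uniformly on compact subsets of $\bC_I$ to a holomorphic function; its zeros are precisely the points $a_n$, the factors $g_{n, a_n}$ contributing none.

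The remaining, and central, step is to upgrade slice-wise convergence on $\bC_I$ to compact convergence on $\bH$. Here I would use the representation formula (Theorem~\ref{cor-1.16-gss}): for the slice preserving regular partial product $P_N$ and any sphere $[x + y\bS]$,
\begin{align*}
P_N(x + yJ) = \tfrac{1}{2}\bigl(P_N(x + yI) + P_N(x - yI)\bigr) + \tfrac{JI}{2}\bigl(P_N(x - yI) - P_N(x + yI)\bigr),
\end{align*}
and since $|JI| = 1$ this expresses $P_N(x+yJ)$ as a combination of $P_N(x \pm yI)$ with bounded coefficients. Consequently the uniform Cauchy property of $(P_N)$ on the relevant compact subset of $\bC_I$ forces the uniform Cauchy property on the associated symmetric compact subset of $\bH$; hence $P_N \to \cP_{(a_n)_{n \in \bZ_{\ge 0}}}$ uniformly on compacta of $\bH$. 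Because regular functions are closed under compact uniform limits (see \cite{GSS}) and slice preservation passes to the limit, the product is a slice preserving regular function in $\cO_s(\bH)$. Finally, the restriction of $\cP_{(a_n)_{n \in \bZ_{\ge 0}}}$ to $\bC_I$ is $\prod_n E_n(z/a_n)$, whose zeros are the real points $a_n$; since these are real, hence singleton spheres, and the function is slice preserving, Lemma~\ref{lem-zeros-of-slice-preserving-regular-func} rules out any non-real zeros, so the zero set is exactly the sequence $(a_n)_{n \in \bZ_{\ge 0}}$.

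The main obstacle is precisely this last transfer: the convergence input is genuinely one-complex-dimensional, and the representation formula, together with the elementary-factor estimate, is what lets the single-line convergence and the classical zero count be promoted to statements about the full quaternionic product without re-deriving convergence estimates directly in $\bH$.
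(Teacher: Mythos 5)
Your proposal is correct, but it takes a genuinely different route from the paper. The paper's entire proof is a one-line citation of the general quaternionic Weierstrass convergence theorem \cite[Theorem 4.28]{GSS}, which is a statement about compactly convergent $\star$-products of regular factors with arbitrary quaternionic zeros; the lemma is then the special case where all factors happen to be slice preserving. You instead give a self-contained argument that never invokes that general theorem: since every factor lies in $\cO_s(\bH)$, you reduce to the classical Weierstrass theory on a single slice $\bC_I$ (via the elementary-factor estimate $|1 - E_n(w)| \le |w|^{n+1}$ for $|w|\le 1$), and then promote slice-wise compact convergence to compact convergence on all of $\bH$ with the representation formula, closing with the closure of regular functions under locally uniform limits and with Lemma \ref{lem-zeros-of-slice-preserving-regular-func} to pin down the zero set. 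Each step checks out: the factors commute (Corollary \ref{cor-comm-group-O_s}), the restriction to $\bC_I$ is exactly the classical product, the representation formula does give the uniform Cauchy property on symmetric compacta with bounded coefficients, and the zero-set argument correctly uses the fact that every sphere $[x+y\bS]$ meets $\bC_I$. What the paper's approach buys is brevity and generality (it works verbatim for non-slice-preserving factors); what yours buys is transparency and economy of prerequisites --- it avoids the machinery of $\star$-product convergence entirely, though it works only because the zeros $a_n$ are real, so that every factor is slice preserving. One small point worth making explicit if you write this up: the compact set on which you apply the representation formula should be replaced by its symmetric completion (which is still compact), so that the points $x \pm yI$ you compare against actually lie in the set where you have the Cauchy estimate.
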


\begin{proof}

Clear from \cite[Theorem 4.28]{GSS}.

\end{proof}

\begin{remark}
\label{rem-W-lemma}

Note that in the definition of $\cP_{(a_n)_{n \in \bZ_{\ge 0}}}(q)$, the factors $(1 - qa_n^{-1})g_{n, a_n}(q)$ commute with each other since the functions $(1 - qa_n^{-1})g_{n, a_n}(q)$ are slice preserving regular.

\end{remark}

By Lemma \ref{lem-zeros-of-slice-preserving-regular-func}, every slice preserving entire function has zeros as real numbers of spherical zeros of the form $[x + \bS y]$ for some $x, y \in \bR$.

\begin{theorem}
\label{quaternionic-Weierstrass-thm}
(See \cite{gv} and \cite[Theorem 4.34]{GSS})

Let $f : \bH \to \bH$ be a slice preserving entire function, i.e., $h \in \cO_s(\bH)$. Suppose that $m \in \bZ_{\ge 0}$ is the multiplicity of $f$ at $0$, $\{b_n\}_{n \in \bZ_{\ge 0}} \subset \bR \setminus \{0\}$ is the sequence of the other real zeros of $f$, $\{\bS_n\}_{n \in \bZ_{\ge 0}}$ is the sequence of the spherical zeros of $f$. If all the zeros listed above are repeated according to their multiplicities, then there exists a nowhere vanishing slice preserving entire function $h$, and for all $n \in \bZ_{\ge 0}$, there exist elements $c_n \in \bS_n$ such that
\begin{align*}
f(q) = q^m \star \cR(q) \star  \cS(q) \star h(q) = q^m\cR(q)\cS(q) h(q),
\end{align*}
where
\begin{align*}
\cR(q) = \prod_{n = 0}^{\infty} (1 - qb_n^{-1}) e^{qb_n^{-1} + \dfrac{1}{2}q^2b_n^{-2} + \cdots + \dfrac{1}{n}q^n b_n^{-n}},
\end{align*}
and
\begin{align*}
\cS(q) = \prod_{n = 0}^{\infty}\left(\dfrac{q^2}{|c_n|^2} - \dfrac{2q\Re(c_n)}{|c_n|^2} + 1\right)e^{q\frac{2\Re(c_n)}{|c_n|^2}  + \frac{1}{2}q^2\frac{2\Re(c_n^2)}{|c_n|^4}  + \cdots + \frac{1}{n}q^n \frac{2\Re(c_n^n)}{|c_n|^{2n}}}.
\end{align*}

\end{theorem}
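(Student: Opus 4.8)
The plan is to follow the classical Weierstrass strategy: build a slice preserving entire function $g$ whose divisor coincides exactly with that of $f$, and then show the quotient $f/g$ is a nowhere-vanishing entire function. I would first record the zero structure. By Lemma \ref{lem-zeros-of-slice-preserving-regular-func} the zero set of a slice preserving entire function splits into isolated real zeros and full spherical zeros $[x + \bS y]$, and by Corollaries \ref{cor-to-cor5.27-1} and \ref{cor-to-cor5.27} (via the spherical order $\sord_f$) each carries a well-defined multiplicity. This produces the data $m$, $\{b_n\}$, $\{\bS_n\}$ in the statement, all zeros repeated according to multiplicity.

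The real part of the product is handled by Lemma \ref{Weierstrass-lem} applied with $a_n = b_n$: the function $\cR(q)$ converges compactly, lies in $\cO_s(\bH)$, and its zero set is exactly $\{b_n\}$ with the prescribed multiplicities. The convergence-producing factors $g_{n, b_n}$ are nowhere vanishing (remark after Definition \ref{def-regular-factor}), so $\cR$ introduces no spurious zeros and contributes precisely the desired real zeros away from the origin, while $q^m$ accounts for the zero at $0$.

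For the spherical zeros I would first observe that the quadratic factor equals $\frac{1}{|c_n|^2}\big((q - \Re(c_n))^2 + |\Im(c_n)|^2\big)$, which by Lemma \ref{lem-spherical-multiplicity}(ii) vanishes exactly on the sphere $\bS_n$; and that the exponential correction has real coefficients $\frac{2\Re(c_n^k)}{|c_n|^{2k}}$, so by Lemma \ref{lem-nth-roots-of-exp}(i) it is slice preserving and nowhere vanishing. Each spherical factor is thus the ``conjugate-pair'' version of the classical elementary factor, i.e.\ $\cS$ restricted to $\bC_I$ is formally $\prod E_n(z/c_n)E_n(z/\bar c_n)$. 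I would then establish compact convergence of $\cS$ by the same elementary-factor estimate used in the real case (Lemma \ref{Weierstrass-lem}), using that $|c_n| \to \infty$ since the zeros of an entire function are discrete.

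Finally, set $g(q) = q^m \cR(q)\cS(q)$. By Corollary \ref{cor-comm-group-O_s} all factors commute and the $\star$-product coincides with ordinary multiplication, so $g \in \cO_s(\bH)$ and $\divi(g) = \divi(f)$, the real and spherical multiplicities matching by construction. Forming $h = f/g$, Lemma \ref{lem-inverse-of-reg-func-in-O_s} together with Lemma \ref{lem-holomorphy-criterion} gives $\divi(h) = \divi(f) - \divi(g) = 0$, so $h$ is a nowhere-vanishing slice preserving entire function and $f = q^m \cR\cS h$, as claimed. The main obstacle is the third step: proving compact convergence of the spherical product $\cS$ and verifying that the degree-$n$ correction factor exactly neutralizes the growth of the quadratic factors; everything else is bookkeeping with the divisor and spherical-order machinery already developed above.
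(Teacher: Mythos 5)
Your proposal takes a genuinely different route from the paper, and it also leaves a genuine gap. The paper's own proof is essentially two lines: since $f$ is slice preserving, Lemma \ref{lem-zeros-of-slice-preserving-regular-func} forces its zero set to consist only of real points and whole spheres $[x+\bS y]$ (no isolated non-real zeros can occur), and therefore the statement is an immediate specialization of the already-established quaternionic Weierstrass factorization theorem of Gentili--Vignozzi (\cite{gv}, \cite[Theorem 4.34]{GSS}), whose general form contains an additional $\star$-product over isolated non-real zeros that is empty here. In other words, the paper outsources all of the hard analysis (compact convergence of the canonical products) to the cited reference; the only new observation is the zero-structure reduction, which you also make in your first paragraph. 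You instead rebuild the factorization from scratch: Lemma \ref{Weierstrass-lem} for the real zeros, a direct construction for the spherical factors, and then divisor bookkeeping. That last part of your argument is fine: granting convergence of $\cS$, your conclusion that $g = q^m\cR\cS \in \cO_s(\bH)$ with $\divi(g)=\divi(f)$, hence $\divi(f/g)=0$ and $f/g$ is a unit in $\cO_s(\bH)$ by Lemma \ref{lem-holomorphy-criterion}, is a correct way to finish.

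The gap is exactly the step you flag as ``the main obstacle'': compact convergence of the spherical product $\cS$ is the actual analytic content of the theorem being proved, and nothing in the paper's toolkit (Lemma \ref{Weierstrass-lem} covers only real zeros) gives it for free, so it cannot be waved through as ``the same estimate.'' It can, however, be closed along the lines you hint at. Fix $I \in \bS$ and set $c_n' = \Re(c_n) + I|\Im(c_n)| \in \bC_I$, so $|c_n'| = |c_n|$. Then $\frac{2\Re(c_n^k)}{|c_n|^{2k}} = c_n'^{-k} + \overline{c_n'}^{\,-k}$, and the restriction of the $n$-th spherical factor to $\bC_I$ is precisely $E_n(z/c_n')\,E_n\bigl(z/\overline{c_n'}\bigr)$, where $E_n(w) = (1-w)\exp\bigl(w + \tfrac{1}{2}w^2 + \cdots + \tfrac{1}{n}w^n\bigr)$ is the classical elementary factor. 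The classical bound $|1 - E_n(w)| \le |w|^{n+1}$ for $|w|\le 1$ then yields compact convergence on $\bC_I$ (for $|z|\le r$ all but finitely many factors satisfy $|z/c_n'|\le \tfrac{1}{2}$, giving a summable tail $2^{-(n+1)}$; the degree growing with $n$ is what makes this work with no density hypothesis beyond discreteness of the zeros), and Lemma \ref{lem-extension} together with the Representation Formula (Theorem \ref{cor-1.16-gss}) transfers compact convergence from the slice to all of $\bH$ and shows the limit is slice preserving regular. With that supplied your proof is complete; but the cleanest repair, and the paper's choice, is simply to quote \cite[Theorem 4.34]{GSS} and observe that slice preservation eliminates the isolated non-real zeros.
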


\begin{proof}

By Lemma \ref{lem-zeros-of-slice-preserving-regular-func}, the zero set of $f$ only consist of real numbers or contain spheres of the form $[x + \bS y]$ for some $x, y \in \bR$ with $y \ne 0$ as its subsets. Thus the theorem is an immediate conquesence of \cite[Theorem 4.34]{GSS}.

\end{proof}

Theorem \ref{quaternionic-Weierstrass-thm} has an important corrollary whose proof is clear.

\begin{theorem}
\label{existence-thm}

Let $\fd : \bH \to \bZ$ be a spherical divisor on $\bH$ such that $\fd \ge 0$. Then $\fd$ is a principal spherical divisor on $\bH$, i.e., there exists a slice preserving regular entire function $f$ in $\bH$ such that $f \not\equiv 0$ and $\fd = \divi(f)$.

\end{theorem}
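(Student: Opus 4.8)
The plan is to construct $f$ directly as a Weierstrass-type product whose zero data is dictated by $\fd$, and then to read off $\divi(f) = \fd$ from Corollary \ref{cor-to-cor5.27} together with the fact that the convergence-producing factors never vanish. The positivity $\fd \ge 0$ is what makes this possible: every value of $\fd$ is a multiplicity to be \emph{prescribed} as a zero order, so no poles need to be accommodated and the output will be a genuine slice preserving entire function.

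First I would extract the zero data from $\fd$. Since $\fd \ge 0$, its support consists of real points $x$ (the case $y = 0$) carrying values $\fd(x) \ge 0$, together with genuine spheres $[x + y\bS]$ with $y \ne 0$ carrying values $\fd([x+y\bS]) \ge 0$. The local finiteness condition (ii) in Definition \ref{def-spherical-divisor} forces these support spheres to form a discrete family, so that only finitely many occur in any compact symmetric subset of $\bH$. Hence I can enumerate the nonzero real points of the support as a sequence $\{b_n\}$, each repeated $\fd(b_n)$ times, and the non-real support spheres as $\{\bS_n = [x_n + y_n\bS]\}$, each repeated $\fd(\bS_n)$ times, arranged so that $|b_n| \to \infty$ and $|c_n| \to \infty$, where $c_n = x_n + y_n I$ for a fixed $I \in \bS$; let $m = \fd(0) \ge 0$ denote the value at the origin.

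Next I would assemble the function, using Lemma \ref{Weierstrass-lem} and the factors of Theorem \ref{quaternionic-Weierstrass-thm}: set $f(q) = q^m \cR(q) \cS(q)$, where $\cR$ is the product over the real data $\{b_n\}$ and $\cS$ is the product over the spherical data $\{c_n\}$. By Lemma \ref{Weierstrass-lem}, and the entirely analogous convergence statement for $\cS$ guaranteed by $|c_n| \to \infty$, both products converge compactly and define slice preserving entire functions; by Corollary \ref{cor-comm-group-O_s} the pointwise products coincide with the $\star$-products, so $f \in \cO_s(\bH)$ and $f \not\equiv 0$. Here I would record the identity $\frac{q^2}{|c_n|^2} - \frac{2q\Re(c_n)}{|c_n|^2} + 1 = \frac{1}{|c_n|^2}((q - x_n)^2 + y_n^2)$, which shows that the $n$-th spherical factor of $\cS$ vanishes exactly on $\bS_n$, and I would invoke the remark following Definition \ref{def-regular-factor} that each $g_{n,a}$ and each exponential convergence factor is nowhere vanishing.

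Finally I would verify $\divi(f) = \fd$ pointwise. At the origin, $f$ has order $m = \fd(0)$; at a nonzero real point $x$ the only vanishing factors are the linear terms $(1 - q b_n^{-1})$ with $b_n = x$, contributing total order $\fd(x)$, so $\sord_f(x) = \fd(x)$ by Corollary \ref{cor-to-cor5.27-1}; at a non-real sphere $[x+y\bS]$ the only vanishing factors are the quadratic terms attached to $\bS_n = [x+y\bS]$, contributing spherical order $\fd([x+y\bS])$, so $\sord_f = \fd$ there by Corollary \ref{cor-to-cor5.27}; and at every point outside the support $f$ is nonvanishing, giving $\sord_f = 0$. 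Thus $\divi(f) = \fd$. The main obstacle is the bookkeeping that guarantees the constructed factors introduce \emph{exactly} the prescribed orders and no spurious zeros — this is precisely where the nowhere-vanishing of the convergence factors and of the factor $h$ in Theorem \ref{quaternionic-Weierstrass-thm} is essential — together with confirming that local finiteness of $\fd$ yields the growth $|b_n|, |c_n| \to \infty$ needed for compact convergence of the two infinite products.
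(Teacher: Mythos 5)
Your proposal is correct and is essentially the paper's own argument: the paper states Theorem \ref{existence-thm} as an immediate corollary of Theorem \ref{quaternionic-Weierstrass-thm} ``whose proof is clear,'' and the content of that claim is exactly the construction you spell out --- form $f(q) = q^m\,\cR(q)\,\cS(q)$ from the Weierstrass factors prescribed by $\fd$, with compact convergence from Lemma \ref{Weierstrass-lem} and its spherical analogue in \cite{gv, GSS}, and with the nowhere-vanishing convergence factors plus Corollaries \ref{cor-to-cor5.27} and \ref{cor-to-cor5.27-1} giving $\divi(f) = \fd$. Your version merely makes explicit the bookkeeping the paper suppresses (the only slip being that the nowhere-vanishing factor $h$ of Theorem \ref{quaternionic-Weierstrass-thm} never actually appears in your construction, so invoking it is unnecessary).
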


We now prove the main theorem in this section.

\begin{theorem}
\label{thm-M_s(H)-quotient-field}

The field $\cM_s(\bH)$ of slice preserving semiregular functions in $\bH$ is the quotient field of the integral domain $\cO_s(\bH)$ of slice preserving regular functions in $\bH$. Equivalently, for every slice preserving semiregular functions $h$ in $\bH$, there exist two slice preserving regular functions $f$ and $g$ without common zeros in $\bH$ such that $h = \dfrac{f}{g}$.

\end{theorem}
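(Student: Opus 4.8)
The plan is to prove this in two directions. The inclusion $\cQ(\cO_s(\bH)) \subseteq \cM_s(\bH)$ is already established by Proposition \ref{prop-quotient-field-contained-in-M_s}, so the content is the reverse inclusion: every slice preserving semiregular $h$ on $\bH$ can be written as $f/g$ with $f, g \in \cO_s(\bH)$. The natural strategy is to produce a denominator $g$ whose zero divisor exactly cancels the poles of $h$, so that $g h$ extends to an entire slice preserving regular function $f$, and then verify we can arrange $f, g$ to have no common zeros.

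First I would extract the pole data of $h$. Assuming $h \not\equiv 0$ (the zero case is trivial), consider the principal spherical divisor $\divi(h) = \sord_h$, which by Corollary \ref{cor-main-c1} is a genuine spherical divisor on $\bH$. Split it into its negative part: define the divisor $\fd$ on $\bH$ by setting $\fd(q) = -\sord_h(q)$ wherever $\sord_h(q) < 0$ and $\fd(q) = 0$ otherwise. By Propositions \ref{prop-eqn-between-sord-and-ord} and \ref{prop-constant-sord-to-each-sphere} this is constant on each sphere and is locally finite in the sense of Definition \ref{def-spherical-divisor} (the local finiteness transfers directly from that of $\divi(h)$ via Lemma \ref{l-main-lemma1}), so $\fd \in \Div(\bH)$ and $\fd \ge 0$. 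Now I would invoke Theorem \ref{existence-thm}: since $\fd \ge 0$ is a spherical divisor on $\bH$, there exists a slice preserving entire function $g \in \cO_s(\bH)$ with $g \not\equiv 0$ and $\divi(g) = \fd$.

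The key step is then to set $f := g \cdot h = g \star h$ (the two products agree by Lemma \ref{lem-commutative-M_s}, since $g$ is slice preserving) and argue that $f \in \cO_s(\bH)$, i.e.\ that $f$ is actually regular and not merely semiregular. Using that $\Gamma$ is a group homomorphism (Lemma \ref{lem-holomorphy-criterion}(i)), we compute $\divi(f) = \divi(g) + \divi(h) = \fd + \sord_h$. By construction $\fd$ exactly cancels the negative part of $\sord_h$, so $\divi(f)(q) = \fd(q) + \sord_h(q) \ge 0$ for every $q \in \bH$, which means $\divi(f) \ge 0$. By Lemma \ref{lem-holomorphy-criterion}(ii), $f$ is regular on $\bH$; and since $f = g h$ is a product of slice preserving functions it is slice preserving, so $f \in \cO_s(\bH)$. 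Thus $h = f/g \in \cQ(\cO_s(\bH))$, giving the desired inclusion and hence the equality of fields.

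For the ``no common zeros'' refinement, I would check the supports of the positive-part divisors of $f$ and $g$ are disjoint. At any sphere $[x + \bS y]$ where $g$ vanishes we have $\fd(x+Iy) > 0$, which forces $\sord_h(x+Iy) < 0$; but then $\divi(f)(x+Iy) = \fd(x+Iy) + \sord_h(x+Iy) = 0$ there, so by the same local factorization (Corollary \ref{cor-to-cor5.27} / \ref{cor-to-cor5.27-1}) $f$ has neither a zero nor a pole on that sphere. Hence $\cZ(f)$ and $\cZ(g)$ are disjoint, and $f, g$ share no zeros. I expect the main obstacle to be the bookkeeping for local finiteness and symmetry of the divisor $\fd$ — one must confirm that restricting $\divi(h)$ to its negative part preserves membership in $\Div(\bH)$ — together with the careful appeal to Lemma \ref{lem-holomorphy-criterion} to promote the a priori semiregular $f$ to a genuinely regular function; once the homomorphism property of $\Gamma$ and Theorem \ref{existence-thm} are in hand, the argument is essentially a divisor-cancellation computation.
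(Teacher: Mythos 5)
Your proposal is correct and follows essentially the same route as the paper: decompose $\divi(h)$ into positive and negative parts (your $\fd$ is exactly the paper's $\fd^{-} = \max(0,-\sord_h)$), use Theorem \ref{existence-thm} to realize $\fd^{-}$ as $\divi(g)$ for some $g \in \cO_s(\bH)$, conclude via the homomorphism property and the holomorphy criterion of Lemma \ref{lem-holomorphy-criterion} that $f = gh$ lies in $\cO_s(\bH)$, and verify disjointness of zeros by the same divisor computation. The only cosmetic difference is that you phrase the no-common-zeros step as $\divi(f)$ vanishing on the zero spheres of $g$, while the paper derives the contradiction $\sord_h(q_0) > 0$ and $\sord_h(q_0) < 0$ at a hypothetical common zero; these are the same argument.
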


\begin{proof}

The case $h \equiv 0$ is obvious since one can choose $f \equiv 0$ and $g \equiv 1$.

Suppose that $h \not\equiv 0$. Let $\fd^{+} : \bH \to \bZ$ and $\fd^{-} : \bH \to \bZ$ be spherical divisors on $\bH$ defined by
\begin{align*}
\fd^{+}(q) &= \max(0, \sord_h(q)), \\
\fd^{-}(q) &= \max(0, -\sord_h(q))
\end{align*}
for every $q \in \bH$. It is clear that $\divi(h) = \fd^{+} - \fd^{-}$.

Since $\fd^{-} \ge 0$, it follows from Theorem \ref{existence-thm} that there exists a slice preserving entire function $g$ in $\bH$ such that $g \not\equiv 0$ and $\fd^{-} = \divi(g)$. Set 
\begin{align*}
f = gh \not\equiv 0.
\end{align*}
It is clear that $f$ is a slice preserving semiregular function in $\bH$. By Lemma \ref{lem-holomorphy-criterion}, $\divi(f) = \divi(g) + \divi(h) = \fd^{+} \ge 0$, and thus $f$ is a slice preserving regular function in $\bH$.

We now prove that $\cZ(f) \cap \cZ(g)$ is empty. Assume the contrary, i.e., there exists $q_0 \in \cZ(f) \cap \cZ(g)$. Then $\sord_f(q_0) = \fd^{+}(q_0) = \max(0, \sord_h(q_0)) > 0$, which implies that $\sord_h(q_0) > 0$. On the other hand,  $\sord_g(q_0) = \fd^{-}(q_0) = \max(0, -\sord_h(q_0)) > 0$, which implies that $\sord_h(q_0) < 0$, a contradiction. Thus $\cZ(f) \cap \cZ(g)$ is empty, and the theorem follows immediately.

\end{proof}

\section{Bers's theorem}
\label{sec-bers}

In this section, we prove a quaternionic analogue of Bers's theorem. For the classical Bers theorem for rings of analytic functions, see Bers \cite{bers} or Remmert \cite{rem2}.

Let $D$ be a symmetric slice domain in $\bH$. One can equip the integral domain $\cO_s(D)$ of slice preserving slice regular functions in $D$ with the natural action both on left and right by the reals $\bR$. With this action, $\cO_s(D)$ becomes a (commutative) algebra over $\bR$ (see Lang \cite{lang} for this notion). The quaternions $\bH$ is also equipped with the same action both on left and right by the reals $\bR$, and it is a division algebra over $\bR$. Note that every real number commutes with elements in $\cO_s(D)$ as well as in $\bH$.

\begin{definition}
\label{def-char}
(characters of $\cO_s(D)$)

A map $\chi$ from $\cO_s(D)$ to $\bH$ is called a character of $\cO_s(D)$ if the following are satisfied:
\begin{itemize}

\item [(i)] $\chi(f + g) = \chi(f) + \chi(g)$ for any $f, g \in \cO_s(D)$;

\item [(ii)] $\chi(rf) = \chi(fr) = r\chi(f) = \chi(f)r$ for any $r \in \bR$ and $f \in \cO_s(D)$; and

\item [(iii)] $\chi(fg) = \chi(f)\chi(g)$ for any $f, g \in \cO_s(D)$.

\end{itemize}

In brevity, a character $\chi : \cO_s(D) \to \bR$ is simply an $\bR$-algebra homomorphism from $\cO_s(D)$ to $\bH$.

\end{definition}

\begin{remark}
\label{rem-nontrivial-char}

\begin{itemize}

\item [(i)] For the rest of the paper, let $\b1 : D \to \bH$ be the slice preserving regular function on $D$ defined by $\b1(q) = 1$ for every $q \in D$. Note that $\b1^2 = \b1$. From part (iii) above, we deduce that $\chi(\b1) = \chi(\b1^2) = \chi(\b1)^2$, and thus either $\chi(\b1) = 0$ or $\chi(\b1) = 1$.

Suppose that $\chi(\b1) = 0$. For any $f \in \cO_s(D)$, we see that $\chi(f) = \chi(f \b1) = \chi(f)\chi(\b1) = 0$, and thus $\chi \equiv 0$. In this case we say that $\chi$ is a trivial character. If $\chi(\b1) = 1$, $\chi$ is called a nontrivial character of $\cO_s(D)$. It is obvious that $\chi(c) = c$ for any $c \in \bR$ if $\chi$ is nontrivial.

\item [(ii)] From (iii) in the above definition, and since $\cO_s(D)$ is a commutative ring, $\chi(fg) = \chi(gf) = \chi(f)\chi(g) = \chi(g)\chi(f)$ for all $f, g \in \cO_s(D)$. 

\item [(iii)] It is clear from (i) in the above definition that $\chi(0) = 0$.

\end{itemize}

\end{remark}

\begin{lemma}
\label{lem-image-of-char-Bers}

Let $\chi : \cO_s(D) \to \bH$ be a nontrivial character of $\cO_s(D)$. Then there exists an element $I \in \bS$ such that $\chi(f) \in \bC_I$ for every $f \in \cO_s(D)$.

\end{lemma}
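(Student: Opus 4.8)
The plan is to exploit the fact that $\cO_s(D)$ is commutative, so that its image under $\chi$ is a commutative subset of the division algebra $\bH$, and then to invoke the structure of commutative subalgebras of $\bH$. First I would record two consequences of $\chi$ being a nontrivial character: by Remark \ref{rem-nontrivial-char}(i) we have $\chi(c) = c$ for every $c \in \bR$, so $\bR \subseteq \chi(\cO_s(D))$; and by Remark \ref{rem-nontrivial-char}(ii), $\chi(f)\chi(g) = \chi(g)\chi(f)$ for all $f, g \in \cO_s(D)$, so $\chi(\cO_s(D))$ is a commutative subset of $\bH$. The statement then reduces to the purely algebraic fact that a commutative $\bR$-subalgebra of $\bH$ is contained in a single complex line $\bC_I$.

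Next I would split into two cases according to whether the image meets $\bH \setminus \bR$. If $\chi(f) \in \bR$ for every $f \in \cO_s(D)$, then since $\bR \subset \bC_I$ for every $I \in \bS$, the conclusion holds with any choice of $I$. Otherwise, fix $f_0 \in \cO_s(D)$ with $q_0 := \chi(f_0) \notin \bR$, and write $q_0 = x + Iy$ with $x = \Re(q_0)$, $y = |\Im(q_0)| \neq 0$, and $I = \Im(q_0)/|\Im(q_0)| \in \bS$ as in (\ref{e-rep-of-q-in-S}). For an arbitrary $f \in \cO_s(D)$, commutativity of the image gives $\chi(f)q_0 = q_0\chi(f)$; since $x, y \in \bR$ are central in $\bH$ and $y \neq 0$, this is equivalent to $\chi(f)I = I\chi(f)$. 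Thus every element of the image lies in the centralizer $C(I) = \{q \in \bH : qI = Iq\}$.

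The crux is therefore the computation that $C(I) = \bC_I$. Here I would use Proposition \ref{prop-inner-cross-prod-relations}(ii): choose $J \in \bS$ with $I \perp J$ and set $K = IJ$, so that $1, I, J, K$ is a basis of $\bH$ obeying the standard quaternion relations. Writing a general $q = a + bI + cJ + dK$ and expanding $qI$ and $Iq$ using $JI = -K$, $KI = J$, $IJ = K$, $IK = -J$, the equation $qI = Iq$ forces $c = d = 0$, whence $q = a + bI \in \bC_I$; the reverse inclusion $\bC_I \subseteq C(I)$ is immediate. Applying this with $\chi(f)$ in the role of $q$ shows $\chi(f) \in \bC_I$ for all $f$, completing the proof. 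I expect this centralizer computation to be the only real content; everything else is a direct unwinding of the character axioms. It is worth flagging that the resulting $I$ does not depend on the choice of $f_0$: any other non-real image value must also commute with $q_0$, hence already lies in $\bC_I$, so no inconsistency can arise.
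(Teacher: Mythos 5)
Your proposal is correct and takes essentially the same route as the paper's proof: both split on whether the image of $\chi$ is real, use the commutativity of the image (Remark \ref{rem-nontrivial-char}(ii)) to show every value $\chi(f)$ commutes with a fixed non-real value $q_0 = x + Iy$, and then identify the centralizer of $I$ in $\bH$ with $\bC_I$. The only difference is in how that centralizer is computed --- the paper writes $\chi(f) = \alpha + J\beta$, deduces $IJ = JI$, and kills the cross product $I \times J$ via Proposition \ref{prop-inner-cross-prod-relations}(i), whereas you expand a general quaternion in the orthogonal basis $1, I, J, K$ of Proposition \ref{prop-inner-cross-prod-relations}(ii) --- which is a cosmetic rather than substantive distinction.
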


\begin{proof}

If $\chi(f) \in \bR$ for every $f \in \cO_s(D)$, then every element $I \in \bS$ satisfies the conclusion of the lemma. 

Suppose that there exists a slice preserving regular function $f_0 \in \cO_s(D)$ such that $c_0 = \chi(f_0) \in \bH \setminus \bR$. One can write $c_0$ in the form $c_0 = a + Ib$ for some $I \in \bS$ and $a, b \in \bR$ with $b\ne 0$. From Remark \ref{rem-nontrivial-char}(ii), for any $f \in \cO_s(D)$,
\begin{align*}
c_0\chi(f) = \chi(f_0)\chi(f) = \chi(f)\chi(f_0) = \chi(f) c_0.
\end{align*}

Fix an arbitrary $f \in \cO_s(D)$, and let $\chi(f) = \alpha + J \beta$ for some $\alpha, \beta \in \bR$ and $J \in \bS$. From the above equation,
\begin{align*}
(a + Ib)(\alpha + J\beta) = (\alpha + J\beta)(a + Ib),
\end{align*}
and thus
\begin{align*}
\beta b IJ = b\beta JI.
\end{align*}

If $\beta = 0$, it follows that $\chi(f) = \alpha \in \bR \subset \bC_I$. If $\beta \ne 0$, it follows that $IJ = JI$. 

Let $\langle I, J \rangle \in \bR$ be the Euclidean inner product of $I$ and $J$ as vectors in $\bR^4$, and $I \times J $ be the natural cross product of $I, J$ in $\bR^3$. It is known from Proposition \ref{prop-inner-cross-prod-relations} that 
\begin{align*}
IJ = -\langle I, J\rangle + I \times J.
\end{align*}
Similarly, 
\begin{align*}
JI = -\langle J, I\rangle + J \times I,
\end{align*}
and thus
\begin{align*}
I \times J = J \times I = - I \times J,
\end{align*}
which implies that $I \times J = 0$, which is impossible unless $I = \pm J$. Thus $\chi(f) = \alpha \pm I \beta \in \bC_I$, which proves the lemma.

\end{proof}

Throughout this section, for a quaternion $q = x + yI \in \bC_I$ for some $I \in \bS$ and $x, y \in \bR$, we write, for each $J \in \bS$, $[q]_J$, the quaternion $x + Jy \in \bC_J$. It is clear that $q = [q]_I$ and $\bar{q} = x - Iy = [q]_{-I}$. If $y = 0$, then $[q]_J \in \bR$ for any $J \in \bS$.

Let $D$ be a symmetric slice domain in $\bH$. Let $c \in D$ and $I \in \bS$. We define the map $\chi_{c, J} : \cO_s(D) \to \bH$ by setting $\chi_{c, J},(f) = [f(c)]_J$ for every $f \in \cO_s(D)$. Such a map $\chi_{c, J}$ is called an \textit{evaluation} at the pair $(c, J) \in D \times \bS$. Thus for each $c \in D$, there is a sequence of evaluations $\left(\chi_{c, J}\right)_{J \in \bS}$ associated to $c$. By definition, the image of $\chi_{c, J}$, say $\chi_{c, J}(\cO_s(D))$, is a subset of $\bC_J$. 
\begin{lemma}
\label{lem-evaluation-character-Bers}

$\chi_{c, J}$ is a nontrivial character of $\cO_s(D)$ for every $c \in D$ and $J \in \bS$.

\end{lemma}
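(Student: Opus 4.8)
The plan is to verify directly that $\chi_{c,J}$ satisfies the three axioms of Definition \ref{def-char} together with the nontriviality condition $\chi_{c,J}(\b1) = 1$ from Remark \ref{rem-nontrivial-char}. The central point is that for a slice preserving regular function $f \in \cO_s(D)$, the bracket operation $[f(c)]_J$ interacts well with the ring operations precisely because $f$ maps each complex line $\bC_I$ into itself.

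\textbf{Setting up the reduction.} First I would fix the notation: writing $c = x + yI_0$ for some $I_0 \in \bS$ and $x, y \in \bR$, the value $f(c)$ lies in $\bC_{I_0}$ since $f$ is slice preserving, so $f(c) = u + I_0 v$ for real numbers $u, v$ depending on $f$, and then by definition $\chi_{c,J}(f) = [f(c)]_J = u + Jv$. The key observation is that the assignment $f(c) = u + I_0 v \mapsto u + Jv$ is nothing other than the $\bR$-algebra isomorphism $\bC_{I_0} \to \bC_J$ sending $I_0 \mapsto J$ (and fixing $\bR$). So the real content is that $f \mapsto f(c)$ is an $\bR$-algebra homomorphism $\cO_s(D) \to \bC_{I_0}$, after which post-composing with this fixed isomorphism of complex lines preserves all the structure.

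\textbf{Carrying out the axioms.} For additivity (i), $(f+g)(c) = f(c) + g(c)$ holds pointwise, and since $[\,\cdot\,]_J$ is additive on $\bC_{I_0}$ (being the restriction of an $\bR$-linear isomorphism), we get $\chi_{c,J}(f+g) = \chi_{c,J}(f) + \chi_{c,J}(g)$. For $\bR$-linearity (ii), since real scalars commute with everything and $[rf(c)]_J = r[f(c)]_J$, all four equalities in Definition \ref{def-char}(ii) follow. For multiplicativity (iii), I would invoke Corollary \ref{cor-comm-group-O_s} and Proposition \ref{prop-O_s-commutative-ring}: in $\cO_s(D)$ the $\star$-product coincides with ordinary multiplication, so $(fg)(c) = f(c)g(c)$ as a genuine product in the field $\bC_{I_0}$, and because $[\,\cdot\,]_J : \bC_{I_0} \to \bC_J$ is a \emph{ring} isomorphism we obtain $[f(c)g(c)]_J = [f(c)]_J [g(c)]_J$, which is exactly $\chi_{c,J}(fg) = \chi_{c,J}(f)\chi_{c,J}(g)$. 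Finally, $\chi_{c,J}(\b1) = [\b1(c)]_J = [1]_J = 1$, so by Remark \ref{rem-nontrivial-char}(i) the character is nontrivial.

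\textbf{Main obstacle.} The only subtle step is multiplicativity, and specifically justifying that $[\,\cdot\,]_J$ respects products of elements of $\bC_{I_0}$. This is where one must be careful that $f(c)$ and $g(c)$ both lie in the \emph{same} complex line $\bC_{I_0}$ — guaranteed by the slice preserving hypothesis — so that their product is computed inside $\bC_{I_0}$ using the single imaginary unit $I_0$, and the bracket map, being the $\bR$-algebra isomorphism $\bC_{I_0} \cong \bC \cong \bC_J$, transports this product faithfully. Explicitly, if $f(c) = u + I_0 v$ and $g(c) = u' + I_0 v'$ then $f(c)g(c) = (uu' - vv') + I_0(uv' + u'v)$, and applying $[\,\cdot\,]_J$ gives $(uu'-vv') + J(uv'+u'v) = (u + Jv)(u' + Jv')$, which confirms the identity. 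I would emphasize that it is the coincidence of the $\star$-product with ordinary multiplication on $\cO_s(D)$ that lets us treat $(fg)(c)$ as the pointwise product $f(c)g(c)$; without slice preservation this pointwise interpretation would fail.
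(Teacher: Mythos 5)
Your proposal is correct and follows essentially the same route as the paper's proof: both fix the representation $f(c) = u + I_0 v$, $g(c) = u' + I_0 v'$ inside the single complex line $\bC_{I_0}$ (using the slice preserving hypothesis), and verify additivity and multiplicativity by the explicit computation $[f(c)g(c)]_J = (uu'-vv') + J(uv'+u'v) = [f(c)]_J[g(c)]_J$, with condition (ii) dismissed as trivial. Your added framing of $[\,\cdot\,]_J$ as the $\bR$-algebra isomorphism $\bC_{I_0} \to \bC_J$, and the explicit check $\chi_{c,J}(\b1)=1$, are harmless refinements of the same argument.
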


\begin{proof}

Condition (ii) in Definition \ref{def-char} is trivial. It suffices to verify conditions (i) and (iii) in Definition \ref{def-char}. Indeed, there exists an element $I \in \bS$ such that $c \in \bC_I$. Let $f, g \in \cO_s(D)$. Since $c \in \bC_I$, $f(c), g(c)$ belong in $\bC_I$, and thus $f(c) = u_1 + Iv_1$ and $g(c) = u_2 + Iv_2$ for some $u_1, v_1, u_2, v_2 \in \bR$. Thus $f(c) + g(c) = u_1 + u_2 + I(v_1 + v_2)$ and $f(c)g(c) = u_1u_2 - v_1v_2 + I(u_1v_2 + u_2v_1)$, and therefore
\begin{align*}
\chi_{c, J}(f + g) &= [f(c) + g(c)]_J \\
&= u_1 + u_2 + J(v_1 + v_2) \\
&= u_1 + Jv_1 + u_2 + Jv_2 \\
&= [f(c)]_J + [g(c)]_J \\
&= \chi_{c, J}(f) + \chi_{c, J}(g),
\end{align*}
which proves (i) of Definition \ref{def-char}.

For (iii) of Definition \ref{def-char}, we see that
\begin{align*}
\chi_{c, J}(fg) &= [f(c)g(c)]_J \\
&= u_1u_2 - v_1v_2 + J(u_1v_2 + u_2v_1) \\
&= (u_1 + Jv_1)(u_2 + Jv_2) \\
&= \chi_{c, J}(f) \chi_{c, J}(g).
\end{align*}

\end{proof}

For each $c \in D$, let $\cX_c : D \to \bH$ be the map defined by $\cX_c(f) = f(c)$. Such a map is called an \textit{evaluation at $c$}.

\begin{lemma}
\label{main-lem0-Bers}

Let $c \in D \cap \bC_I$ for some $I \in \bS$. Then $\chi_c = \chi_{c, I}$ and $\chi_c$ is a nontrivial character of $\cO_s(D)$.

\end{lemma}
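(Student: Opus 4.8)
The plan is to reduce everything to the definitions together with the already-established Lemma~\ref{lem-evaluation-character-Bers}. Here $\chi_c$ denotes the evaluation at $c$, that is, $\chi_c(f) = f(c)$ for every $f \in \cO_s(D)$, and the task splits into two parts: proving the identity $\chi_c = \chi_{c,I}$, and then deducing that $\chi_c$ is a nontrivial character.

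First I would show that the slice preserving hypothesis does all the work for the identity $\chi_c = \chi_{c,I}$. Since $c \in D \cap \bC_I$ and $f$ is slice preserving, Definition~\ref{def-slice-preserving-reg-func} gives $f(c) \in f(D \cap \bC_I) \subseteq \bC_I$. Hence one may write $f(c) = x + yI$ for some $x, y \in \bR$, and by the definition of the bracket notation one has $[f(c)]_I = x + yI = f(c)$. Therefore $\chi_{c,I}(f) = [f(c)]_I = f(c) = \chi_c(f)$ for every $f \in \cO_s(D)$, which is exactly the asserted equality $\chi_c = \chi_{c,I}$.

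The second assertion then follows immediately. Lemma~\ref{lem-evaluation-character-Bers} shows that $\chi_{c,J}$ is a nontrivial character of $\cO_s(D)$ for every $c \in D$ and every $J \in \bS$; specializing to $J = I$ shows that $\chi_{c,I}$ is a nontrivial character, and since $\chi_c = \chi_{c,I}$ by the first step, $\chi_c$ is a nontrivial character of $\cO_s(D)$ as well. There is no genuine obstacle in this argument; the only point requiring attention is the appeal to the slice preserving property to guarantee $f(c) \in \bC_I$, which is precisely what makes the bracket $[\,\cdot\,]_I$ act as the identity on $f(c)$. Everything else is a direct consequence of Lemma~\ref{lem-evaluation-character-Bers}.
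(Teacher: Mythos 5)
Your proposal is correct and follows essentially the same route as the paper: both use the slice preserving property to conclude $f(c) \in \bC_I$, so that $[f(c)]_I = f(c)$ and hence $\chi_c = \chi_{c,I}$, and then invoke Lemma~\ref{lem-evaluation-character-Bers} to conclude that $\chi_c$ is a nontrivial character.
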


\begin{proof}

Let $f \in \cO_s(D)$ be an arbitrary slice preserving regular function on $D$. Since $c \in \bC_I$, $f(c) \in \bC_I$. Write $f(c) = u + Iv$ for some $u, v \in \bR$. Thus $f(c) = u + Iv = [f(c)]_I$ for every $f \in \cO_s(D)$. Thus $\chi_c(f) = f(c) = [f(c)]_I = \chi_{c, I}(f)$ for every $f \in \cO_s(D)$, and thus $\chi_c = \chi_{c, I}$. It follows from Lemma  \ref{lem-evaluation-character-Bers} that $\chi_c$ is a nontrivial character of $\cO_s(D)$.

\end{proof}

\begin{lemma}
\label{main-lem1-Bers}

Let $\chi$ be a nontrivial character of $\cO_s(D)$, and $id_D : D \to \bH$ is the identity map which sends each $q \in D$ to itself. Set $c = \chi(id_D)$. Then
\begin{itemize}

\item [(i)] $c \in D$; and

\item [(ii)] $\chi = \chi_{c}$.

\end{itemize}

\end{lemma}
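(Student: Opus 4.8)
The plan is to follow the classical Bers strategy---identify the character with evaluation at the point $c=\chi(id_D)$---while working around the genuinely noncommutative difficulty that, unlike in the holomorphic case, the value $f(c)$ is generally not real, so the function $f-f(c)\b1$ need not belong to $\cO_s(D)$ and the usual linear factorization $f-f(c)=(\,\cdot\,-c)\cdot h$ is unavailable inside $\cO_s(D)$. As a preliminary remark, by Lemma \ref{lem-image-of-char-Bers} there is a fixed $I\in\bS$ with $\chi(\cO_s(D))\subseteq\bC_I$; since $id_D\in\cO_s(D)$ (the identity is slice preserving regular), the point $c=\chi(id_D)$ lies in $\bC_I$, and I write $c=x_0+Iy_0$ with $x_0,y_0\in\bR$.

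For part (i) I would argue by contradiction. If $c\notin D$, then by symmetry of $D$ the whole sphere $[x_0+y_0\bS]$ misses $D$. The function $N(q)=(q-x_0)^2+y_0^2$ has real coefficients, hence lies in $\cO_s(D)$, and its zero set in $\bH$ is exactly $[x_0+y_0\bS]$; thus $N$ has no zero in $D$, and by Lemma \ref{lem-inverse-of-reg-func-in-O_s} the function $1/N$ lies in $\cO_s(D)$, i.e. $N$ is a unit. Then $\chi(N)\chi(1/N)=\chi(\b1)=1$, so $\chi(N)\ne 0$. On the other hand $\chi(N)=(c-x_0)^2+y_0^2=(Iy_0)^2+y_0^2=0$, a contradiction; hence $c\in D$. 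The case $y_0=0$ is identical, with $N(q)=(q-x_0)^2$ and zero set $\{x_0\}$.

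For part (ii) the goal is $\chi(f)=f(c)$ for every $f\in\cO_s(D)$, which by Lemma \ref{main-lem0-Bers} is exactly $\chi=\chi_c$. Fix $f$ and write $f(c)=u+Iv$ with $u,v\in\bR$ (legitimate since $c\in\bC_I$). The device replacing the missing linear factorization is the quadratic $G=(f-u\b1)^2+v^2\b1\in\cO_s(D)$, which satisfies $G(c)=(Iv)^2+v^2=0$. Since $c\in D$, Lemma \ref{lem-zeros-of-slice-preserving-regular-func} shows $G$ vanishes on the whole sphere through $c$, so Lemma \ref{lem-spherical-multiplicity} together with the slice-preserving division of Lemma \ref{lem-Zorn-for-zero} yields a factorization $G=N\cdot h$ with $h\in\cO_s(D)$, where $N=(q-x_0)^2+y_0^2$ (or $N=q-x_0$ when $y_0=0$). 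Applying $\chi$ and using $\chi(N)=0$ gives $\chi(G)=0$, while the $\bR$-algebra property forces $\chi(G)=(\chi(f)-u)^2+v^2$; hence $(\chi(f)-u)^2=-v^2$. Writing $\chi(f)=a+Ib\in\bC_I$ and separating the real and $I$-parts of this identity forces $\chi(f)\in\{u+Iv,\ u-Iv\}=\{f(c),\ \overline{f(c)}\}$.

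The main obstacle is precisely this last sign ambiguity: the quadratic trick cannot by itself distinguish $f(c)$ from $\overline{f(c)}$, a feature with no analogue in the commutative complex Bers theorem. To remove it I would show the choice is \emph{global}. If two functions $f,g$ both have nonzero $I$-imaginary part at $c$ and $\chi(f),\chi(g)$ made opposite choices, then comparing $\chi(f+g)=\chi(f)+\chi(g)$ with the two values permitted for $\chi(f+g)$ produces a contradiction (after rescaling $g$ by a positive real, if necessary, to avoid the degenerate cancellation in which the two imaginary parts sum to zero). Thus there is a single $\varepsilon\in\{\pm1\}$ with $\chi(f)=u_f+\varepsilon I v_f$ whenever $f(c)=u_f+Iv_f$, the value being automatically $f(c)$ when $f(c)\in\bR$. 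Finally I calibrate with the identity: $\chi(id_D)=c$ forces $\varepsilon=+1$ when $c\notin\bR$, while for $c\in\bR$ both signs coincide. Therefore $\chi(f)=f(c)$ for all $f$, that is $\chi=\chi_c$, completing the proof.
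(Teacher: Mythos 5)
Your proof is correct in substance and, for most of its length, runs parallel to the paper's: part (i) is the paper's argument almost verbatim (the paper's $e(q) = q^2 - 2q\Re(c) + |c|^2$ is exactly your $N$), and in part (ii) your quadratic $G = (f - u\b1)^2 + v^2\b1$ is exactly the paper's $P(q) = f(q)^2 - 2f(q)\Re(f(c)) + |f(c)|^2$, factored the same way through the real quadratic $N$ via Lemmas \ref{lem-zeros-of-slice-preserving-regular-func} and \ref{lem-Zorn-for-zero} to force $(\chi(f)-u)^2 + v^2 = 0$, hence $\chi(f) \in \{f(c), \overline{f(c)}\}$. Where you genuinely diverge is the resolution of the $\pm$ ambiguity. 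The paper multiplies by the identity: comparing $\chi(f\, id_D) = \chi(f)\,c$ with the two values permitted for $\chi(f \,id_D)$ rules out $\chi(f) = [f(c)]_{-I}$ unless $f(c) \in \bR$. You instead use additivity to make the sign global and then calibrate with $id_D$; this is valid, and in fact cleaner than you suggest: no rescaling is needed, because in the ``degenerate'' case $v_f + v_g = 0$ the value $(f+g)(c)$ is real, the quadratic constraint then pins $\chi(f+g) = (f+g)(c)$ exactly, and the same contradiction $v_f = 0$ still falls out. The two devices buy the same conclusion; yours exploits only the additive structure of $\chi$ at the final step, the paper's only the multiplicative one.

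The one real gap is the case $c \in \bR$. Your closing claim that ``for $c \in \bR$ both signs coincide'' is equivalent to asserting that $v_f = 0$, i.e.\ $f(c) \in \bR$, for \emph{every} $f \in \cO_s(D)$; if that were false, $id_D$ could not calibrate the sign (its value at $c$ is real, hence sign-blind), and your argument would stall with $\chi(f) \in \{f(c), \overline{f(c)}\}$ undetermined. The assertion is true, but it is not automatic and the paper proves it (its Case 1): for orthogonal $I, J \in \bS$, a slice preserving $f$ maps $D \cap \bR = (D \cap \bC_I) \cap (D \cap \bC_J)$ into $\bC_I \cap \bC_J = \bR$. Insert that one line and your proof is complete.
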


\begin{proof}

It is clear that $q - c$ and $q - \bar{c}$ are regular functions on $D$. Thus the function $e : D \to \bH$ defined by
\begin{align*}
e(q) = (q - c) \star (q - \bar{c}) = q^2 - q2\Re(c) + |c|^2
\end{align*}
is regular on $D$. Since $\Re(c)$ and $|c|^2$ are real numbers, $e(q) \in \cO_s(D)$.

Since $q^2 = id_D(q)^2$, and $id_D \in \cO_s(D)$, we see from Remark \ref{rem-nontrivial-char}(ii) that
\begin{align*}
\chi(e) = \chi(id_D)^2 - \chi(id_D)2\Re(c) + |c|^2.
\end{align*}

Write $c = a + Ib$ for some $a, b \in \bR$ and $I \in \bS$. We deduce from the above equation that 
\begin{align*}
\chi(id_D)^2 - \chi(id_D)2\Re(c) + |c|^2 &= c^2 - c2\Re(c) + |c|^2 \\
&= a^2 + 2abI - b^2 - (a + Ib)2a + a^2 + b^2 = 0,
\end{align*}
and thus 
\begin{align}
\label{e0-main-lem1-Bers}
\chi(e) = 0. 
\end{align}

From the equation of $e$, we know that the zero set of $e$ in $D$ is 
\begin{align*}
\cZ(e) = \{ [c]_J \; | \; J \in \bS\} \cap D = [x + \bS y] \cap D.
\end{align*}

Suppose that $c \not\in D$. Since $D$ is symmetric, $D \cap [x + \bS y] = \emptyset$, and thus the zero set of $e$ is empty. Therefore by Lemma \ref{lem-inverse-of-reg-func-in-O_s}, the function $f : D \to \bH$ defined by $f(q) = \dfrac{1}{e(q)}$ belongs in $\cO_s(D)$, and thus
\begin{align*}
1 = \chi(\b1) = \chi(ef) = \chi(e) \chi(f) = 0,
\end{align*}
which is a contradiction. Therefore $c \in D$. 

Take an arbitrary $f \in \cO_s(D)$. Let $P : D \to \bH$ be the function defined by
\begin{align*}
P(q) = f(q)^2 - f(q)2 \Re(f(c)) + |f(c)|^2
\end{align*}
for each $q \in D$. Since $f \in \cO_s(D)$ and $\Re(f(c)), |f(c)|^2$ are real numbers, we deduce that $P \in \cO_s(D)$. Thus
\begin{align}
\label{e1-main-lem1-Bers}
\chi(P) = \chi(f)^2 - \chi(f)2\Re(f(c)) + |f(c)|^2.
\end{align} 

One can check that
\begin{align*}
P(c) = f(c)^2 - f(c)2 \Re(f(c)) + |f(c)|^2 = 0,
\end{align*}
and thus $q = c \in D$ is a zero of $P(q)$. Since $P$ is a slice preserving regular function on $D$ and $c = a + Ib$, it follows from Lemma \ref{lem-zeros-of-slice-preserving-regular-func} that the sphere $[a + \bS b]$ is a spherical zero of $P$, and thus
\begin{align*}
P(q) = ((q - a)^2 + b^2)Q(q)
\end{align*}
for some slice preserving regular function $Q \in \cO_s(D)$. Note that 
\begin{align*}
e(q) = q^2 - q2\Re(c) + |c|^2 = (q - a)^2 + b^2,
\end{align*}
and thus
\begin{align*}
P(q) = e(q)Q(q).
\end{align*}
It follows from (\ref{e0-main-lem1-Bers}) and (\ref{e1-main-lem1-Bers}) that
\begin{align*}
 \chi(f)^2 - \chi(f)2\Re(f(c)) + |f(c)|^2 = \chi(P) = \chi(eQ) = \chi(e)\chi(Q) = 0,
 \end{align*}
 and so $\chi(f)$ is a zero to the polynomial $q^2 - q2\Re(f(c)) + |f(c)|^2 = ((q - \Re(f(c))^2 + \Im(f(c))^2$.

We know that the zero set of $q^2 - q2\Re(f(c)) + |f(c)|^2 = ((q - \Re(f(c))^2 + \Im(f(c))^2$ is the sphere $[\Re(f(c)) + \bS \Im(f(c))]$, and thus
\begin{align}
\label{e2-main-lem1-Bers}
\chi(f) = \Re(f(c)) + L_f\Im(f(c)) = [f(c)]_{L_f} \in \bC_{L_f}
\end{align}
for some $L_f \in \bS$. 

We consider the following cases.

\textit{Case 1. $c \in \bR$.}

For any slice preserving regular function $f$ in $\cO_s(D)$, $f(D \cap \bC_I) \subset \bC_I$ for every $I \in \bS$. Since $D$ is a symmetric slice domain, $D \cap \bR$ is nonempty. Note that for any $I, J \in \bS$ with $I$ being orthogonal to $J$ as vectors in $\bR^4$, $\bR = \bC_I \cap \bC_J$, and thus
\begin{align*}
(D \cap \bC_I) \cap (D \cap \bC_J) = D \cap (\bC_I \cap \bC_J) = D \cap \bR \ne \emptyset.
\end{align*}
Thus for any $f \in \cO_s(D)$, $f( D \cap \bR)$ is a subset of $\bC_I \cap \bC_J$ which is the reals $\bR$, and therefore
\begin{align*}
f(D \cap \bR) \subset \bR.
\end{align*}

Since $f(c) \in \bR$ for every $f \in \cO_s(D)$, we deduce from (\ref{e2-main-lem1-Bers}) that $\chi(f) = [f(c)]_{L_f} = f(c) = \chi_c(f)$, and thus $\chi = \chi_c$, where $c = \chi(id_D)$.

\textit{Case 2. $c \not\in \bR$.}

Since $c = a + Ib$, $b \ne 0$, and thus $c \in \bC_I \setminus \bR$. Since $c = \chi(id_D) \in \bC_I \setminus \bR$, it follows from Lemma \ref{main-lem1-Bers} that the image of $\chi$ is a subset of $\bC_I$, i.e., $\chi(f) \in \bC_I$ for every $f \in \cO_s(D)$/ Thus we deduce immediately from (\ref{e2-main-lem1-Bers}) that for every $f \in \cO_s(D)$, either $L_f = I$ or $L_f = -I$. Thus, for every $f \in \cO_s(D)$, either
\begin{align}
\label{e4-main-lem1-Bers}
\chi(f) = [f(c)]_I
\end{align}
or
\begin{align}
\label{e5-main-lem1-Bers}
\chi(f) = [f(c)]_{-I}
\end{align}

Suppose that there exists  $f \in \cO_s(D)$ such that $\chi(f) = [f(c)]_{-I}$ and $f(c) \not\in \bR$. Since $\chi$ is a character of $\cO_s(D)$, we deduce that
\begin{align}
\label{e3-main-lem1-Bers}
\chi(f id_D) = \chi(f)\chi(id_D) = [f(c)]_{-I}c.
\end{align}

We know that either $\chi(f id_D) = [f(c) id_D(c)]_I = [f(c)c]_I$ or $\chi(f id_D) = [f(c) id_D(c)]_{-I} = [f(c) c]_{-I}$. 

If $\chi(f id_D) = [f(c)c]_I$, then it follows from (\ref{e3-main-lem1-Bers}) that 
\begin{align*}
[f(c)]_{-I}c =  [f(c)]_I [c]_I = [f(c)]_I c.
\end{align*}
Since $c \ne 0$, $[f(c)]_{-I} = [f(c)]_I$, which implies that $f(c) \in \bR$, a contradiction. 

If $\chi(f id_D) = [f(c)c]_{-I}$, then it follows from (\ref{e3-main-lem1-Bers}) that 
\begin{align*}
[f(c)]_{-I}c =  [f(c)]_{-I} [c]_{-I} = [f(c)]_{-I} \bar{c}.
\end{align*}
Since $f(c) \ne \bR$, we deduce that $[f(c)]_{-I} \ne 0$, and thus $c = \bar{c}$, which implies that $c \in \bR$, a contradiction. 

Thus for every $f \in \cO_s(D)$, $\chi(f) = [f(c)]_I$ or $f(c) \in \bR$. Note that if $f(c) \in \bR$, $f(c) = [f(c)]_J = [f(c)]_I$ for any $J \in \bS$, and thus we deduce from Lemma \ref{main-lem0-Bers}, (\ref{e4-main-lem1-Bers}) and (\ref{e5-main-lem1-Bers}) that $\chi(f) = [f(c)]_I = \chi_{c, I}(f) = \chi_c(f)$. Therefore $\chi = \chi_c$, which proves the lemma.

\end{proof}

We now prove a quaternionic analogue of Bers's theorem (see the classical Bers theorem in Bers \cite{bers} or Remmert \cite{rem2}).

\begin{theorem}
\label{Bers-thm}

Let $D, \widehat{D}$ be symmetric slice domians in $\bH$. For every $\bR$-algebra homomorphism $\phi : \cO_s(D) \to \cO_s(\widehat{D})$, there exists a unique map $h : \widehat{D} \to D$ such that $\phi(f) = f \circ h$ for all $f \in \cO_s(D)$. More precisely,
\begin{itemize}

\item [(i)] $h = \phi(id_D) \in \cO_s(\widehat{D})$, where $id_D$ is the identiy map defined by $id_D(q) = q$ for every $q \in D$; and

\item [(ii)] $\phi$ is bijective if and only if $h$ is bi-regular. 

\end{itemize}

\end{theorem}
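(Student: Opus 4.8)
The plan is to reduce the global statement about $\mathbb{R}$-algebra homomorphisms $\phi:\cO_s(D)\to\cO_s(\widehat D)$ to the pointwise statement about characters already established in Lemmas \ref{main-lem0-Bers} and \ref{main-lem1-Bers}. The key observation is that composing $\phi$ with an evaluation produces a character of $\cO_s(D)$. Concretely, for each fixed point $\hat c\in\widehat D$, the map $\cX_{\hat c}\circ\phi:\cO_s(D)\to\bH$ sending $f\mapsto (\phi f)(\hat c)$ is an $\mathbb{R}$-algebra homomorphism, since $\phi$ is one and evaluation at $\hat c$ is one (the latter being exactly $\chi_{\hat c}$ from Lemma \ref{main-lem0-Bers}). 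Thus $\cX_{\hat c}\circ\phi$ is a character of $\cO_s(D)$; one checks it is nontrivial because $\phi(\b1)=\b1$ (as $\phi$ is a unital $\mathbb{R}$-algebra homomorphism) gives $(\cX_{\hat c}\circ\phi)(\b1)=\b1(\hat c)=1$.

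\emph{Defining $h$ and proving (i).} Having established that $\cX_{\hat c}\circ\phi$ is a nontrivial character, Lemma \ref{main-lem1-Bers} applies: there is a point $c\in D$, namely $c=(\cX_{\hat c}\circ\phi)(id_D)=\phi(id_D)(\hat c)$, such that $\cX_{\hat c}\circ\phi=\chi_c$, i.e.\ $(\phi f)(\hat c)=[f(c)]_{L}$ for the appropriate slice. I would then \emph{define} $h:\widehat D\to D$ by $h(\hat c):=\phi(id_D)(\hat c)$, so that $h=\phi(id_D)$ as a function. Lemma \ref{main-lem1-Bers}(i) guarantees $h(\hat c)=c\in D$ for every $\hat c$, so $h$ indeed maps $\widehat D$ into $D$; and since $\phi(id_D)\in\cO_s(\widehat D)$ by hypothesis, $h$ is a slice preserving regular function, which is part (i). The identity $\phi(f)=f\circ h$ then reads, at each point $\hat c$, as $(\phi f)(\hat c)=f(h(\hat c))$; this is precisely the content of $\cX_{\hat c}\circ\phi=\chi_{h(\hat c)}$ applied to $f$, once one verifies the slice-index bookkeeping so that $[f(c)]_L=f(c)=\chi_{c}(f)$ in Lemma \ref{main-lem1-Bers}.

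\emph{The main obstacle} is the slice-consistency issue in the composition identity. Lemma \ref{main-lem1-Bers} produces, for each fixed $\hat c$, a character equal to $\chi_{c}$ with $c=h(\hat c)$, and in its Case 2 the identification $\chi(f)=[f(c)]_I$ holds in the complex line $\bC_I$ determined by $c$. To write $\phi(f)=f\circ h$ globally one must confirm that the slice $I$ attached to each $\hat c$ is exactly the slice containing $h(\hat c)=\phi(id_D)(\hat c)$, so that $[f(h(\hat c))]_I$ coincides with the genuine value $f(h(\hat c))$ computed by the slice preserving function $f$ along $\bC_I$. Because $h=\phi(id_D)$ is itself slice preserving, $h(\widehat D\cap\bC_J)\subset\bC_J$, and evaluating at $\hat c\in\bC_J$ forces $c=h(\hat c)\in\bC_J$; I would use this together with Lemma \ref{main-lem0-Bers} (which pins the character to be $\chi_{c}=\chi_{c,J}$ for $c\in\bC_J$) to discharge the bookkeeping cleanly.

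\emph{Uniqueness and part (ii).} Uniqueness of $h$ is immediate: if $\phi(f)=f\circ h'$ for all $f$, then taking $f=id_D$ gives $h'=id_D\circ h'=\phi(id_D)=h$. For (ii), if $\phi$ is bijective then its inverse $\psi=\phi^{-1}$ is again an $\mathbb{R}$-algebra homomorphism, so by the part already proved there is $g=\psi(id_{\widehat D})\in\cO_s(D)$ with $\psi(F)=F\circ g$ for all $F\in\cO_s(\widehat D)$; applying $\phi\circ\psi=\mathrm{id}$ and $\psi\circ\phi=\mathrm{id}$ to the identity maps yields $h\circ g=id_{\widehat D}$ and $g\circ h=id_D$, exhibiting $h$ as bi-regular with inverse $g$. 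Conversely, if $h$ is bi-regular with regular inverse $h^{-1}$, then $f\mapsto f\circ h^{-1}$ is a two-sided inverse to $\phi$ (using Lemma \ref{lem-comp-of-slice-reg-func} to see these compositions are slice preserving regular and hence land in the correct rings), so $\phi$ is bijective.
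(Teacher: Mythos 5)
Your proposal follows essentially the same route as the paper: compose $\phi$ with the evaluation characters $\chi_{\hat c}$, invoke Lemma \ref{main-lem1-Bers} to get $\chi_{\hat c}\circ\phi=\chi_{h(\hat c)}$ with $h=\phi(id_D)$, deduce $\phi(f)=f\circ h$ pointwise, and handle bijectivity symmetrically via $\phi^{-1}$; moreover, the slice-index bookkeeping you flag as the main obstacle is already absorbed into the statement of Lemma \ref{main-lem1-Bers}, whose conclusion is the clean identity $\chi=\chi_c$ with $\chi_c(f)=f(c)$, so no separate argument is needed there. The only slip is in part (ii), where your two composition identities are swapped: since $h:\widehat{D}\to D$ and $g:D\to\widehat{D}$, applying $\psi\circ\phi=\mathrm{id}$ to $id_D$ gives $h\circ g=id_D$, and applying $\phi\circ\psi=\mathrm{id}$ to $id_{\widehat{D}}$ gives $g\circ h=id_{\widehat{D}}$, not the other way around.
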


\begin{proof}

If a map $h : \widehat{D} \to D$ satisfies the theorem, then $\phi(f) = f \circ h$ for all $f \in \cO_s(D)$. In particular, $\phi(id_D) = id_D \circ h = h$, which implies the uniqueness of $h$. So it suffices to verify that the theorem does in fact hold for the map $h := \phi(id_D) \in \cO_s(\widehat{D})$. Note that for each $a \in \widehat{D}$, $\chi_a \circ \phi$ is a character of $\cO_s(D)$. By Lemma \ref{main-lem1-Bers}, 
\begin{align*}
\chi_a \circ \phi = \chi_{c},
\end{align*}
where 
\begin{align*}
c = \chi_a \circ \phi(id_D) = \chi_a(\phi(id_D)) = \chi_a(h) = h(a) \in D.
\end{align*}

Letting $a$ range over $c = \chi(id_D)$, and repeating the same arguments as above, we deduce that the image of $h$ is a subset of $D$, i.e., $h  : \widehat{D} \to D$. Furthermore,
\begin{align*}
\chi_a \circ \phi = \chi_{c} = \chi_{h(a)},
\end{align*}
and thus for all $f \in \cO_s(D)$ and $a \in \widehat{D}$, 
\begin{align}
\label{e1-Bers-thm}
\phi(f)(a) = \chi_a(\phi(f)) = (\chi_a \circ \phi)(f) = \chi_{h(a)}(f) = f(h(a)) = (f \circ h)(a).
\end{align}
 Therefore 
\begin{align*}
\phi(f) = f \circ h
\end{align*}
for all $f \in \cO_s(D)$.

We prove part (ii). Suppose that $\phi$ is bijective, and $\phi^{-1} : \cO_s(\widehat{D}) \to \cO_s(D)$ is its inverse. Following the same arguments as above, we deduce that the map $g := \phi^{-1}(id_{\widehat{D}}) \in \cO_s(D)$ satisfies $\phi^{-1}(f) = f \circ g$ for all $f \in \cO_s(\widehat{D})$.  We claim that $g$ is the inverse of $h$. Indeed, since $g \in \cO_s(D)$, we know from the construction of $h$ that
\begin{align*}
g \circ h = \phi(g) = \phi(\phi^{-1}(id_{\widehat{D}})) = id_{\widehat{D}}.
\end{align*}
Exchanging the roles of $g$ and $h$, we also obtain that $h \circ g = id_D$, which proves that $h$ is bi-regular. 

Suppose now that $h$ is bi-regular. Let $g : D \to \widehat{D} $ be the inverse of $h$. Let $\psi : \cO_s(\widehat{D}) \to \cO_s(D)$ be the map defined by 
\begin{align*}
\psi(f) = f \circ g
\end{align*}
for all $f \in \cO_s(\widehat{D})$. It is immediate that $\psi$ is an $\bR$-algebra homomorphism, and the inverse of $\phi$. Hence the theorem follows.

\end{proof}

\section{A quaternionic analogue of Iss'sa's theorem}
\label{sec-isssa}

In this section, we prove a quaternionic analogue of Iss'sa's theorem \footnote{Hej Iss'sa is the pseudonym of a renown Japanese mathematician} (see \cite{isssa} or \cite{rem2}) which is a generalization of Bers's theorem to function fields. For a beautiful exposition of the classical Iss'sa's theorem in $\bC$, the reader is referred to \cite{rem2}.

Let $D$ be a symmetric slice domain. Recall that $\cQ(\cO_s(D))$ is the quotient field of $\cO_s(D)$ (see Section \ref{sec-divisors}). Let $\cQ(\cO_s(D))^{\times}$ be the multiplicative (abelian) group $\cQ(\cO_s(D)) \setminus \{0\}$. A map $v :\cQ(\cO_s(D))^{\times} \to \bZ$ is called a \textit{valuation on $\cQ(\cO_s(D))$} if for every $f, g \in\cQ(\cO_s(D))^{\times}$, 
\begin{itemize}

\item[(V1)] $v(fg) = v(f) + v(g)$; and

\item [(V2)] $v(f + g) \ge \min(v(f), v(g))$ if $f \ne -g$.

\end{itemize}

By declaring $v(f) = \infty$ if and only if $f \equiv 0$, we can extend $v$ to the whole field $\cQ(\cO_s(D))$. 

\begin{lemma}
\label{main-lem0-Iss'sa}

Let $v$ be a valuation on $\cQ(\cO_s(D))$. Then $v(c) = 0$ for all quaternions $c \in \bH \setminus \{0\}$.

\end{lemma}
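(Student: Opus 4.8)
The plan is to reduce the statement to the elementary fact that a valuation must vanish on any element that admits an $n$-th root for every $n \ge 1$, and to observe that inside $\cQ(\cO_s(D))$ the only nonzero quaternionic constants are the nonzero reals.

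First I would record that the quaternions occurring as elements of $\cQ(\cO_s(D))$ are exactly the reals. Indeed, a constant function $q \mapsto c$ is slice preserving only if $c$ lies in $\bC_I$ for every $I \in \bS$; since $\bigcap_{I \in \bS}\bC_I = \bR$, such a constant belongs to $\cO_s(D)$ precisely when $c \in \bR$. (Equivalently, a slice preserving function takes real values at real points, as in the proof of Lemma \ref{main-lem1-Bers}.) Thus it suffices to prove $v(c) = 0$ for every $c \in \bR \setminus \{0\}$.

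Next, for a positive real $c$ and each integer $n \ge 1$, the positive real $n$-th root $c^{1/n} \in \bR_{>0}$ is a nonzero element of $\cO_s(D)$, hence of $\cQ(\cO_s(D))^{\times}$, and $(c^{1/n})^n = c$. Applying (V1) repeatedly gives $v(c) = n\,v(c^{1/n})$, so $v(c)$ is divisible by $n$. As this holds for every $n \ge 1$ and $v(c) \in \bZ$, we conclude $v(c) = 0$. Finally, for an arbitrary nonzero real $c$ I would use $c^2 > 0$: by (V1), $v(c^2) = 2\,v(c)$, while the previous step gives $v(c^2) = 0$ since $c^2$ is a positive real; hence $2\,v(c) = 0$ and therefore $v(c) = 0$.

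Combined with the first paragraph this yields $v(c) = 0$ for all $c \in \bH \setminus \{0\}$ that lie in the field, which is the assertion. The only point requiring any care is the divisibility argument, but since positive reals automatically possess real $n$-th roots, there is no need here for any quaternionic analogue of the Root Criterion discussed in the introduction: the obstruction that complicates the general Iss'sa argument is simply absent for constants.
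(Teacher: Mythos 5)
Your proof is correct, and it takes a route that differs from the paper's in an instructive way. The paper argues via the quaternionic Fundamental Theorem of Algebra: for each $n \ge 1$ it picks $\alpha_n \in \bH \setminus \{0\}$ with $\alpha_n^n = c$ and applies (V1) to get $v(c) = n\,v(\alpha_n)$, hence $n$ divides $v(c)$ for every $n$, forcing $v(c) = 0$. You instead replace quaternionic roots by positive real roots: you first observe that the only quaternionic constants actually lying in $\cQ(\cO_s(D))$ are the reals (a non-real constant is not slice preserving, and any constant element of the quotient field takes a real value at a real point, as in the proof of Lemma \ref{main-lem1-Bers}), then run the divisibility argument with $c^{1/n} \in \bR_{>0}$ for positive $c$, and dispose of negative reals via $2v(c) = v(c^2) = 0$. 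Both proofs share the same engine, namely divisibility of $v(c)$ by every $n \ge 1$, but your version is the more careful one: the paper applies $v$ to the roots $\alpha_n$, yet when $c < 0$ and $n$ is even (and likewise when $c$ is non-real) every quaternionic $n$-th root is necessarily non-real, hence does not belong to $\cQ(\cO_s(D))^{\times}$ at all, so $v(\alpha_n)$ is, strictly speaking, undefined there. Your restriction to real constants and real roots removes this gap, and it also pins down the correct reading of the lemma's statement: the only constants for which the claim is meaningful --- and the only ones used later, e.g.\ in the proof of Lemma \ref{main-lem1-Iss'sa}, where the relevant constants $a_n = n$ are real --- are the nonzero reals. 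Your closing remark is also apt: since positive reals have real $n$-th roots for free, no quaternionic analogue of the Root Criterion is needed for constants.
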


\begin{proof}

By the Fundamental Theorem of Algebra for polynomials over the quaternions $\bH$ (see, for example, \cite{en, gsv, niven, ps}), for every integer $n \ge 1$, there exists $\alpha_n \in \bH \setminus \{0\}$ such that $\alpha_n^n = c$. It follows from (V1) that $v(c) = nv(\alpha_n) \in n\bZ$ for all $n \ge 1$, which implies that $n$ divides $v(c)$ for every $n \ge 1$. This is possible only if $v(c) = 0$.

\end{proof}

\begin{remark}
\label{rem-Iss'sa}

\begin{itemize}

\item []

\item [(i)] Condition (V2) immediately implies the following:
\begin{itemize}

\item [(V2')] $v(f + g) = \min(v(f), v(g))$ if $v(f) \ne v(g)$. 

\end{itemize}
\item [(ii)] Condition (V1) implies that $v(1) = 0$, and thus $v(1/f) = - v(f)$.

\end{itemize}
The proof of this result is exactly the same as in the classical case for valuations on meromorphic functions on domains in the complex plane $\bC$ (see \cite[p.109]{rem2}).

\end{remark}

\begin{example}
\label{example-Iss'sa}

The natural valuations on $\cQ(\cO_s(D))$ are the order functions $\sord_{(\cdot)}(c)$ with $c \in D$ introduced in Section \ref{sec-divisors} which associate to each slice preserving semiregular function $f \in\cQ(\cO_s(D))^{\times}$ its spherical order at the point $c$, say $\sord_f(c) \in \bZ$. The following is a quaternionic analogue of the holomorphy criterion in the classical complex analysis (see \cite[p.109]{rem2}).

\textbf{Holomorphy Criterion.} \textit{A slice preserving semiregular function $f \in \cQ(\cO_s(D))^{\times}$ is regular if and only if $\sord_f(c) \ge 0$ for all $c \in D$. Equivalently, $\divi(f) \ge 0$}

The proof of the above result follows immediately from that of Lemma \ref{lem-holomorphy-criterion}.

\end{example}

\begin{lemma}
\label{main-lem1-Iss'sa}

Let $v$ be a valuation on $\cQ(\cO_s(\bH))$. Then $v(q) \ge 0$.

\end{lemma}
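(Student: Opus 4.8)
The plan is to argue by contradiction: suppose $v(q)<0$ and write $v(q)=-c$ with $c\in\bZ_{>0}$, where $q=\mathrm{id}_{\bH}$. I will manufacture a single slice preserving entire function $F$ whose valuation is forced, for every $N$, to be a positive integer that is simultaneously strictly smaller than $(N+1)!$ and divisible by $(N+1)!$, which is absurd.

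First I would record two preliminary facts. By Lemma \ref{main-lem0-Iss'sa}, $v$ vanishes on $\bH\setminus\{0\}$, so for every real $a$ the two summands in $q-a$ have distinct valuations $-c$ and $0$; condition (V2$'$) then gives $v(q-a)=-c$, and (V1) yields $v\!\left(\prod_k(q-a_k)^{m_k}\right)=-c\sum_k m_k$ for any finite real product. The second, and key, fact is that $v(u)=0$ for every nowhere-vanishing slice preserving entire function $u$. Indeed, such a $u$ is real and of constant sign on $\bR$, so $u=\pm e^{H}$ for some slice preserving entire $H$; adjusting the sign by a real $\ell$-th root of $\pm1$ and invoking the exponential-root construction of Lemma \ref{lem-nth-roots-of-exp}, $u$ has an $\ell$-th root in $\cM_s(\bH)$ for every $\ell\ge1$, whence $\ell\mid v(u)$ for all $\ell$ and therefore $v(u)=0$. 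This is exactly the place where the missing Root Criterion is bypassed by the explicit $n$-th roots of exponentials.

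Next I would build the function. Fix reals $a_k\to\infty$ (for instance $a_k=k$) and consider the positive spherical divisor $\fd=\sum_{k\ge1}k!\,[a_k]$; by Theorem \ref{existence-thm} there is $F\in\cO_s(\bH)$ with $F\not\equiv0$ and $\divi(F)=\fd$. For each $N$ put $P_N(q)=\prod_{k=1}^{N}(q-a_k)^{k!}$ and $T_N=F/P_N$. By the Holomorphy Criterion (see Example \ref{example-Iss'sa} and Lemma \ref{lem-holomorphy-criterion}) $T_N\in\cO_s(\bH)$ with $\divi(T_N)=\sum_{k>N}k!\,[a_k]$, a divisor all of whose multiplicities are divisible by $(N+1)!$. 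Hence, using Theorem \ref{existence-thm} once more, I can realize $\tfrac{1}{(N+1)!}\divi(T_N)$ by some $W_N\in\cO_s(\bH)$; then $T_N/W_N^{(N+1)!}$ has trivial divisor, so by Lemma \ref{lem-holomorphy-criterion} it is a nowhere-vanishing slice preserving entire function and, by the preliminary fact, has valuation $0$, giving $(N+1)!\mid v(T_N)$. On the other hand $v(P_N)=-c\,D_N$ with $D_N=\sum_{k=1}^{N}k!$, so
\[
v(T_N)=v(F)-v(P_N)=v(F)+cD_N,
\]
and therefore $(N+1)!\mid\bigl(v(F)+cD_N\bigr)$ for every $N$.

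Finally I would extract the contradiction from crude size estimates: since $D_N\le 2\,N!$, for all sufficiently large $N$ the integer $v(F)+cD_N$ is positive and strictly smaller than $(N+1)!$, so it cannot be a nonzero multiple of $(N+1)!$. I expect the main obstacle to be the unit step $v(u)=0$, namely producing roots of arbitrary order in the absence of a Root Criterion; this is resolved by the exponential representation of zero-free slice preserving entire functions together with Lemma \ref{lem-nth-roots-of-exp}, and the factorial multiplicities in $\fd$ are chosen precisely so that the same root mechanism, applied to $W_N$, delivers the divisibility $(N+1)!\mid v(T_N)$ that drives the contradiction.
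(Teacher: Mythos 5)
Your divisor-theoretic skeleton is correct and genuinely different from the paper's argument. The paper runs a separate construction for each $d\ge 2$: a Weierstrass product with zeros of multiplicity $d^n$ at $n$, whose tails are exhibited \emph{explicitly} as $d^{\ell}$-th powers, so that the only units whose roots are ever needed are the factors $g_{n,a_n}=e^{P}$ with $P$ a real polynomial, which Lemma \ref{lem-nth-roots-of-exp} covers; letting $d$ vary then forces $m=0$. You instead use a single function $F$ with factorial multiplicities, the homomorphism property of $\divi$ and the Holomorphy Criterion (Lemma \ref{lem-holomorphy-criterion}), Theorem \ref{existence-thm} twice, and a size estimate. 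The steps $v(q-a_k)=-c$, $\divi(T_N)=\sum_{k>N}k!\,[a_k]$, the existence of $W_N$, the fact that $T_N/W_N^{(N+1)!}$ is a unit of $\cO_s(\bH)$, and the final estimate $0<v(F)+cD_N<(N+1)!$ for large $N$ are all sound, and this packaging is arguably cleaner than the paper's.

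However, the step on which everything pivots --- $v(u)=0$ for \emph{every} unit $u$ of $\cO_s(\bH)$ --- is asserted, not proved. Your justification, that $u$ is real of constant sign on $\bR$ and ``so'' $u=\pm e^{H}$ with $H\in\cO_s(\bH)$, only settles the sign; it does not produce $H$. The existence of such an $H$ is precisely the quaternionic Root Criterion whose absence the paper emphasizes and whose avoidance dictates the paper's entire strategy: your unit $T_N/W_N^{(N+1)!}$ comes from Theorem \ref{existence-thm} with no explicit form, so Lemma \ref{lem-nth-roots-of-exp} (stated only for exponentials of real polynomials) does not apply to it. The claim you need is in fact true and can be proved with the paper's tools: by the Representation Formula (Theorem \ref{cor-1.16-gss}) a slice preserving regular function satisfies $u(\bar z)=\overline{u(z)}$ on each slice, so $u_I$ is a zero-free entire intrinsic function on $\bC_I$; a classical entire logarithm $h$ of $u_I$ satisfies $\overline{h(\bar z)}-h(z)\equiv 2\pi kI$ for a constant $k\in\bZ$, hence $h+\pi kI$ is again intrinsic and exponentiates to $\pm u_I$; extending by Lemma \ref{lem-extension} and invoking the Identity Principle (Theorem \ref{thm1.12-gss}) gives $u=\pm e^{H}$ with $H\in\cO_s(\bH)$; finally $v(u)=v(e^{H})$ since $v(-1)=0$ (Lemma \ref{main-lem0-Iss'sa}), and $\ell\mid v(e^{H})$ because $\exp(H/\ell)^{\ell}=e^{H}$, the proof of Lemma \ref{lem-nth-roots-of-exp} going through verbatim for any $H\in\cO_s(\bH)$ via Lemma \ref{lem-comp-of-slice-reg-func}. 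Note also that for even $\ell$ there is no ``real $\ell$-th root of $-1$''; the sign must be absorbed through $v(-1)=0$ rather than by taking its root. Until this logarithm argument (or a substitute, e.g.\ rebuilding $F$ and $W_N$ as explicit Weierstrass products as the paper does) is supplied, your proof has a genuine gap at its central step.
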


\begin{proof}

Assume the contrary, i.e., $v(q) = -m$ for some positive integer $m \ge 1$. Since $v(c) = 0$ for all $c \in \bH \setminus \{0\}$, we deduce that
\begin{align}
\label{e0-main-lem1-Iss'sa}
v(q - c) = \min (v(q), v(c)) = -m
\end{align}
for all $c \in \bH \setminus \{0\}$. 

Now take an arbitrary integer $d \ge 2$. Let $a_n = n$ for each positive integer $n \in \bZ_{> 0}$. By Lemma \ref{Weierstrass-lem}, the function 
\begin{align}
\label{e0-main-lem1-Iss'sa}
\cP_{(a_n)_{n \in \bZ_{> 0}}, d}(q) =  \prod_{n \in \bZ_{> 0}} (1 - qa_n^{-1})^{d^n}g_{n, a_n}(q)^{d^n}
\end{align}
has no zeros in $\bH \setminus \bZ_{\ge 0}$ and vanishes to order $d^n$ at each $a_n = n \in \bZ_{> 0}$, where the $g_{n, a_n}$ are the convergence-producing regular factors defined in Definition \ref{def-regular-factor}.

For each integer $\ell \ge 2$, set
\begin{align*}
\cP_{\ell}(q) = \left(\prod_{h = 1}^{\ell - 1} (1 - qa_h^{-1})^{d^h}g_{h, a_h}(q)^{d^h}\right)^{-1}\cP_{(a_n)_{n \in \bZ_{\ge 0}}, d}(q),
\end{align*}
\begin{align}
\label{e1/2-main-lem1-Iss'sa}
\cH_{\ell}(q) = \left(\prod_{h = 1}^{\ell - 1} (1 - qa_h^{-1})^{d^h}\right)^{-1}\cP_{(a_n)_{n \in \bZ_{\ge 0}}, d}(q),
\end{align}
and
\begin{align*}
\cQ_{\ell}(q) = \prod_{n \ge \ell} (1 - qa_n^{-1})^{d^{n - \ell}}g_{n, a_n}(q)^{d^{n - \ell}}.
\end{align*}

Since the $(1 - qa_h^{-1})^{d^h}g_{h, a_h}(q)^{d^h}$ are slice preserving regular functions in $\cO_s(\bH)$ (see the remark following Definition \ref{def-regular-factor}), the first $\ell - 1$ factors in $\cP_{\ell}$ and $\cH_{\ell}$ commutes with each other. Thus
\begin{align*}
\cP_{\ell}(q) = \prod_{n = \ell}^{\infty} (1 - qa_n^{-1})^{d^n}g_{n, a_n}(q)^{d^n},
\end{align*}
\begin{align*}
\cH_{\ell}(q) = \left(\prod_{h = 1}^{\ell - 1} g_{h, a_h}(q)^{d^h}\right) \prod_{n = \ell}^{\infty} (1 - qa_n^{-1})^{d^n}g_{n, a_n}(q)^{d^n}.
\end{align*}
and
\begin{align}
\label{e1/4-main-lem1-Iss'sa}
\cH_{\ell}(q) = \left(\prod_{h = 1}^{\ell - 1} g_{h, a_h}(q)^{d^h}\right)\cP_{\ell}(q).
\end{align}

By the equations above, and Lemma \ref{Weierstrass-lem}, $\cH_{\ell}$, $\cP_{\ell}$, $\cQ_{\ell}$ all belong in $\cO_s(\bH)$.

We see from Remark \ref{rem-Iss'sa} and (\ref{e1/2-main-lem1-Iss'sa}) that
\begin{align}
\label{e1-main-lem1-Iss'sa}
v(\cH_{\ell}) =  m \sum_{h = 0}^{\ell - 1} d^h + v(\cP_{(a_n)_{n \in \bZ_{\ge 0}}, d})=   \dfrac{m}{d - 1}(d^{\ell} - 1) + v(\cP_{(a_n)_{n \in \bZ_{\ge 0}}, d}).
\end{align}

On the other hand, since the polynomial $q^{d^{\ell}}$ is a slice preserving regular function, and for all $h \ge \ell$, the function $\cQ_{\ell}^{(h)}(q) =  \prod_{n = \ell}^h (1 - qa_n^{-1})^{d^{n - \ell}}g_{n, a_n}(q)^{d^{n - \ell}}$ is a slice preserving regular function, we deduce from Lemma \ref{lem-comp-of-slice-reg-func} that $(\cQ_{\ell}^{(h)}(q))^{d^{\ell}}$ is a slice preserving regular function in $\bH$, and it thus follows from Lemma \ref{Weierstrass-lem} that
\begin{align*}
\cQ_{\ell}(q)^{d^{\ell}} &= \left(\lim_{h \to \infty}\cQ_{\ell}^{(h)}(q)\right)^{d^{\ell}} \\
&= \lim_{h \to \infty}\left(\cQ_{\ell}^{(h)}(q)\right)^{d^{\ell}} \\
&= \lim_{h \to \infty}\prod_{n = \ell}^h (1 - qa_n^{-1})^{d^n}g_{n, a_n}(q)^{d^n} \\
&=\prod_{n \ge \ell} (1 - qa_n^{-1})^{d^n}g_{n, a_n}(q)^{d^n} \\
&= \cP_{\ell}(q).
\end{align*}

By Lemma \ref{lem-nth-roots-of-exp} and the definition of $ g_{h, a_h}$ (see  Definition \ref{def-regular-factor}), for every integer $h$ with $1 \le h \le \ell - 1$, there exists a slice preserving regular function $\cG_{h, a_h}$ such that
\begin{align*}
\cG_{h, a_h}(q)^{d^{\ell}}= g_{h, a_h}(q).
\end{align*}
Since the $G_{h, a_h}$ are slice preserving, they commute with each other, and thus
\begin{align*}
\left(\prod_{h = 1}^{\ell - 1} G_{h, a_h}(q)^{d^h}\right)^{d^{\ell}}   =\prod_{h = 1}^{\ell - 1} g_{h, a_h}(q)^{d^h}.
\end{align*}
Since $G_{h, a_h}, \cQ_{\ell}$ are in $\cO_s(\bH)$, $G_{h, a_h} \cQ_{\ell} = \cQ_{\ell} G_{h, a_h}$, and thus we deduce  from (\ref{e1/4-main-lem1-Iss'sa}) that
\begin{align*}
 \cH_{\ell}(q) =\left(\prod_{h = 1}^{\ell - 1} g_{h, a_h}(q)^{d^h}\right) \cP_{\ell}(q) = \left(\cQ_{\ell}(q) \prod_{h = 1}^{\ell - 1} G_{h, a_h}(q)^{d^h}\right)^{d^{\ell}}.
\end{align*}

Therefore, 
\begin{align*}
v(\cH_{\ell}) =v\left( \left(\cQ_{\ell}(q) \prod_{h = 1}^{\ell - 1} G_{h, a_h}(q)^{d^h}\right)^{d^{\ell}} \right) = d^{\ell}v\left(\cQ_{\ell}(q) \prod_{h = 1}^{\ell - 1} G_{h, a_h}(q)^{d^h}\right)
\end{align*}
and thus 
\begin{align}
\label{e2-main-lem1-Iss'sa}
v(\cH_{\ell}(q)) \equiv 0 \pmod{d^{\ell}}.
\end{align}

From (\ref{e1-main-lem1-Iss'sa}) , (\ref{e2-main-lem1-Iss'sa}), we deduce that
\begin{align*}
\dfrac{m}{d - 1}(d^{\ell} - 1) + v(\cP_{(a_n)_{n \in \bZ_{\ge 0}}, d}) = v(\cH_{\ell}(q)) \equiv 0 \pmod{d^{\ell}}, 
\end{align*}
and thus
\begin{align*}
 v(\cP_{(a_n)_{n \in \bZ_{\ge 0}}, d}) \equiv \dfrac{m}{d - 1} \pmod{d^{\ell}}
\end{align*}
for all $\ell \ge 2$. Therefore $(d - 1)v(\cP_{(a_n)_{n \in \bZ_{\ge 0}}, d}) - m \in d^{\ell}\bZ$ for all $\ell \ge 2$, which is possible only when $ v(\cP_{(a_n)_{n \in \bZ_{\ge 0}}, d}) = \dfrac{m}{d - 1}$. Since $d \ge 2$ is arbitrary and $v(\cP_{(a_n)_{n \in \bZ_{\ge 0}}, d}) \in \bZ$, we deduce that $m = 0$, which is a contradiction since $m$ is a positive integer. This contradiction establishes that $v(q) \ge 0$.

\end{proof}

\begin{corollary}
\label{main-cor-Iss'sa}

Let $v$ be a valuation on $\cQ(\cO_s(D))$. Then $v(f) \ge 0$ for all $f \in \cO_s(D) \setminus \{0\}$.

\end{corollary}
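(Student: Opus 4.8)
The plan is to reduce the statement to Lemma \ref{main-lem1-Iss'sa}, which already establishes $v(q)\ge 0$ for the coordinate function on all of $\bH$, by pulling the given valuation back along composition with $f$. First I would dispose of the trivial case: the only constant functions lying in $\cO_s(D)$ are the real ones (a constant $c\cdot\b1$ is slice preserving only if $c\in\bigcap_{I\in\bS}\bC_I=\bR$), so if $f$ is a nonzero constant then $f$ is identified with a nonzero real scalar and Lemma \ref{main-lem0-Iss'sa} gives $v(f)=0\ge 0$. Hence I may assume $f\in\cO_s(D)$ is non-constant.

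The key construction is the following. Since $f\colon D\to\bH$ is slice preserving and every $g\in\cO_s(\bH)$ is slice preserving, Lemma \ref{lem-comp-of-slice-reg-func} (with $\bH$ as the target symmetric slice domain) shows $g\circ f\in\cO_s(D)$. Thus $\Phi\colon\cO_s(\bH)\to\cO_s(D)$, $\Phi(g)=g\circ f$, is well defined, and it is an $\bR$-algebra homomorphism: because multiplication in both rings coincides with pointwise multiplication (Corollary \ref{cor-comm-group-O_s}), one has $(g_1g_2)\circ f=(g_1\circ f)(g_2\circ f)$ and $(g_1+g_2)\circ f=(g_1\circ f)+(g_2\circ f)$, while $\Phi$ fixes real constants and $\b1$.

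Next I would verify that $\Phi$ is injective. If $g\circ f\equiv 0$ with $f$ non-constant, pick $I\in\bS$ for which the restriction $f_I$ is non-constant; then the open mapping property forces $f_I(D\cap\bC_I)$ to be an open subset of $\bC_I$, on which the holomorphic restriction $g_I$ vanishes, so $g_I\equiv 0$ and hence $g\equiv 0$ by the Identity Principle (Theorem \ref{thm1.12-gss}). Therefore $\Phi$ is an injective homomorphism of integral domains, so it extends to a field embedding $\iota_f\colon\cQ(\cO_s(\bH))\hookrightarrow\cQ(\cO_s(D))$ by $\iota_f(a/b)=\Phi(a)/\Phi(b)$. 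Consequently $w:=v\circ\iota_f$ is a valuation on $\cQ(\cO_s(\bH))$: axioms (V1) and (V2) are inherited from $v$, and $w$ is finite on nonzero elements because $\iota_f$ is injective. Since the coordinate function satisfies $\iota_f(q)=q\circ f=f$, Lemma \ref{main-lem1-Iss'sa} yields $v(f)=w(q)\ge 0$, which is the assertion.

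The analytic weight of the result is entirely carried by Lemma \ref{main-lem1-Iss'sa}, through its Weierstrass-product and $d^{\ell}$-th-root divisibility argument, so the main obstacle here is instead making the reduction airtight: one must confirm that $\Phi$ is injective (so that $\iota_f$ genuinely exists as a map of fields, not merely of rings) and treat the constant case separately, since for constant $f$ the composition map $\Phi$ fails to be injective. I would also be careful to record that $\bH$ is a symmetric slice domain playing the role of the target in Lemma \ref{lem-comp-of-slice-reg-func}, which is precisely what guarantees $g\circ f\in\cO_s(D)$ and hence makes the pullback meaningful.
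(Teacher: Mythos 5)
Your proposal is correct and follows essentially the same route as the paper: both pull the valuation back along composition with $f$ (justified by Lemma \ref{lem-comp-of-slice-reg-func}, the nonconstancy of the slice restrictions $f_I$, the open mapping theorem, and the Identity Principle) to obtain a valuation on $\cQ(\cO_s(\bH))$, and then apply Lemma \ref{main-lem1-Iss'sa} to the coordinate function. Your packaging of this as an injective $\bR$-algebra homomorphism $\Phi(g)=g\circ f$ extended to a field embedding is just a cleaner phrasing of the paper's direct verification that $v_f(g):=v(g\circ f)$ is well defined on $\cQ(\cO_s(\bH))^{\times}$.
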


\begin{proof}

If $f$ is constant, then Lemma \ref{main-lem0-Iss'sa} implies that $v(f) = 0$. 

Suppose that $f$ is nonconstant. Take any slice preserving semiregular function $g \in \cQ(\cO_s(\bH))^{\times}$ on the quaternions $\bH$. Then there exists slice preserving entire functions $\alpha, \beta \in \cO_s(\bH)$ such that $\alpha, \beta \not\equiv 0$ and $g = \dfrac{\alpha}{\beta}$. Since $f$ is a slice preserving regular function on $D$, we deduce from Lemma \ref{lem-comp-of-slice-reg-func} that both $\alpha \circ f : D \to \bH$ and $\beta \circ f : D \to \bH$ are slice preserving regular functions on $D$. 

We show that $\beta \circ f \not\equiv 0$. Assume the contrary, i.e., $\beta \circ f \equiv 0$, which implies that $\beta(f(q)) = 0$ for all $q \in D$. 

We claim  that for all $I \in \bS$, the restriction $f_I$ of $f$ to $D \cap \bC_I$ is nonconstant. Assume the contrary, i.e., there exists an element $I \in \bS$ such that $f_I$ is constant, say $f(q) = c$ for all $q \in D \cap \bC_I$, where $c$ is some constant in $\bH$. Take any $x + y\bS \subset D$, and let $J$ be an arbitrary element in $\bS$. By the Representation Formula (see Theorem \ref{cor-1.16-gss}), we deduce that
\begin{align*}
f(x + yJ) &= \dfrac{1}{2}(f(x + yI) + f(x - yI)) + \dfrac{JI}{2}(f(x - yI) - f(x + yI)) = c,
\end{align*}
which proves that $f$ is constant on $D$, a contradiction. Thus the restriction $f_I$ of $f$ to $D \cap \bC_I$ is nonconstant for all $I \in \bS$. Since $f$ is a slice preserving regular function on $D$, $f_I$ is a holomorphic map from $D \cap \bC_I$ to $\bC_I$ for all $I \in \bS$. By the classical open mapping theorem (see, for example, \cite{rem1}), $U_I := f_I(D \cap \bC_I)$ is open in $\bC_I$ for all $I \in \bS$.

Since $\beta(f(q)) = 0$ for every $q \in D$, the restriction to $\beta$ to $U_I$ is identically to zero for every $I \in \bS$, and therefore by the Identity Theorem (see Theorem \ref{thm1.12-gss}), $\beta \equiv 0$, a contradiction. Thus $\beta \circ f \not\equiv 0$.

Repeating the same arguments as above, we deduce that $\alpha \circ f \not\equiv 0$. Thus $g \circ f = \dfrac{\alpha \circ f }{\beta \circ f}$ is a slice preserving semiregular function in the quotient field $\cQ(\cO_s(D))^{\times}$. 

Thus the map $v_f :  \cQ(\cO_s(\bH))^{\times} \to \bZ$ defined by assigning to each function $g \in \cQ(\cO_s(\bH))^{\times}$ the integer $v(g \circ f)$ is well-defined, and it is clear that $v_f$ is a valuation on $ \cQ(\cO_s(\bH))$. By Lemma \ref{main-lem1-Iss'sa}, 
\begin{align*}
v(f) = v_f(q) \ge 0,
\end{align*}
which proves our corollary.

\end{proof}

\begin{lemma}
\label{main-lem2-Iss'sa}

Let $D, \widehat{D}$ be symmetric slice domains in $\bH$. For every $\bR$-algebra homomorphism $\phi :  \cQ(\cO_s(D)) \to  \cQ(\cO_s(\widehat{D}))$, 
\begin{align*}
\phi(\cO_s(D)) \subset \cO_s(\widehat{D}).
\end{align*}

\end{lemma}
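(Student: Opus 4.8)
The plan is to pull back, via $\phi$, the natural order valuations on $\cQ(\cO_s(\widehat{D}))$ and then invoke the Holomorphy Criterion together with Corollary \ref{main-cor-Iss'sa}. First I would record that $\phi$ is injective: since it is an $\bR$-algebra homomorphism with $\phi(\b1) = \b1 \neq 0$, its kernel is a proper ideal of the field $\cQ(\cO_s(D))$ and hence is $\{0\}$. Consequently $\phi$ restricts to a homomorphism of multiplicative groups $\cQ(\cO_s(D))^{\times} \to \cQ(\cO_s(\widehat{D}))^{\times}$; in particular it sends nonzero elements to nonzero elements.

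Next, for each point $c \in \widehat{D}$, recall from Example \ref{example-Iss'sa} that the spherical order function $\sord_{(\cdot)}(c) : \cQ(\cO_s(\widehat{D}))^{\times} \to \bZ$ is a valuation on $\cQ(\cO_s(\widehat{D}))$. I would set $v_c := \sord_{(\cdot)}(c) \circ \phi$, so that $v_c(g) = \sord_{\phi(g)}(c)$ for $g \in \cQ(\cO_s(D))^{\times}$, and check directly that $v_c$ is a valuation on $\cQ(\cO_s(D))$. Condition (V1) follows from the multiplicativity of $\phi$ together with the additivity $\sord_{uv} = \sord_u + \sord_v$ (which is Lemma \ref{lem-holomorphy-criterion}(i)), and condition (V2) follows from the additivity of $\phi$ combined with the inequality (V2) satisfied by $\sord_{(\cdot)}(c)$. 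The injectivity established above is exactly what guarantees that $v_c$ takes finite integer values on $\cQ(\cO_s(D))^{\times}$.

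Now fix $f \in \cO_s(D)$ with $f \not\equiv 0$. Applying Corollary \ref{main-cor-Iss'sa} to the valuation $v_c$ gives $v_c(f) \geq 0$, that is, $\sord_{\phi(f)}(c) \geq 0$, for every $c \in \widehat{D}$. This is precisely the statement that $\divi(\phi(f)) \geq 0$, so the Holomorphy Criterion of Example \ref{example-Iss'sa} (equivalently Lemma \ref{lem-holomorphy-criterion}(ii)) shows that $\phi(f)$ is regular, i.e. $\phi(f) \in \cO_s(\widehat{D})$. The remaining case $f \equiv 0$ is immediate, since $\phi(0) = 0 \in \cO_s(\widehat{D})$.

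The bulk of the difficulty has already been absorbed into Lemma \ref{main-lem1-Iss'sa} and Corollary \ref{main-cor-Iss'sa}, so this statement is essentially a packaging argument. The two points requiring genuine care are that the pullback $v_c$ really is a valuation on the source field, which hinges on $\phi$ not collapsing any nonzero element to $0$, and that the family of inequalities $\sord_{\phi(f)}(c) \geq 0$ over all $c \in \widehat{D}$ is exactly the hypothesis the Holomorphy Criterion needs in order to promote the a priori merely semiregular function $\phi(f)$ to a genuinely regular one in $\cO_s(\widehat{D})$.
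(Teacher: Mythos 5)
Your proposal is correct and follows essentially the same route as the paper's own proof: injectivity of $\phi$ from the field structure, pulling back the spherical order valuations $\sord_{(\cdot)}(c)$ for $c \in \widehat{D}$ to valuations $v_c$ on $\cQ(\cO_s(D))$, invoking Corollary \ref{main-cor-Iss'sa} to get $\sord_{\phi(f)}(c) \ge 0$ for all $c$, and concluding via the Holomorphy Criterion. Your explicit verification of (V1) and (V2) for $v_c$ and the remark that injectivity is what keeps $v_c$ integer-valued are details the paper leaves implicit, but the argument is the same.
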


\begin{proof}

Since $\phi$ is an $\bR$-algebra homomorphism and both $\cQ(\cO_s(D))$ and $\cQ(\cO_s(\widehat{D}))$ are fields, its kernel is trivial, and thus $\phi$ is injective. Therefore $\phi(f) \ne 0$ for all $f \in \cQ(\cO_s(D))^{\times}$. For every point $c \in \widehat{D}$, the map $v_c : \cQ(\cO_s(D))^{\times} \to \bZ$ defined by
\begin{align*}
v_c(f) := \sord_{\phi(f)}(c), \; \; f \in \cQ(\cO_s(D))^{\times},
\end{align*}
defines a valuation on $\cQ(\cO_s(D))$. Here $\sord_{(\cdot)}(c) : \cQ(\cO_s(\widehat{D}))^{\times} \to \bZ$ is the valuation in Example \ref{example-Iss'sa}, introduced in Section \ref{sec-divisors}. By Corollary \ref{main-cor-Iss'sa}, for all $f \in \cO_s(D)$, $v_c(f) \ge 0$, and thus $\sord_{\phi(f)}(c) \ge 0$ for all $c \in \widehat{D}$. By Holomorphy Criterion following Example \ref{example-Iss'sa}, $\phi(f) \in \cO_s(\widehat{D})$ for all $f \in \cO_s(D)$.

\end{proof}

Combining Theorem \ref{Bers-thm} and the above lemma, and noting that $\cQ(\cO_s(D))$ is the quotient field of the ring $\cO_s(D)$, the following result, viewed as a quaternionic analogue of Iss'sa's theorem (see Iss'sa \cite{isssa} or \cite[Iss'sa's theorem, p.109]{rem2}), is immediate.

\begin{theorem}
\label{Iss'sa-thm}

Let $\phi : \cQ(\cO_s(D)) \to \cQ(\cO_s(\widehat{D}))$ be any $\bR$-algebra homomorphism. Then there exists exactly one regular map $h : \widehat{D} \to D$ such that $\phi(f) = f \circ h$ for all $f \in \cQ(\cO_s(D))$.

\end{theorem}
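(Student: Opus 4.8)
The plan is to reduce this field-level statement to the ring-level Bers theorem (Theorem \ref{Bers-thm}) by means of the containment established in Lemma \ref{main-lem2-Iss'sa}, and then to promote the resulting map to the fraction fields using the universal property of the field of fractions. The real content has already been spent in Lemma \ref{main-lem1-Iss'sa}, Corollary \ref{main-cor-Iss'sa}, and Lemma \ref{main-lem2-Iss'sa}; here only a gluing argument remains.

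First I would restrict $\phi$ to the subring $\cO_s(D) \subset \cQ(\cO_s(D))$. By Lemma \ref{main-lem2-Iss'sa} we have $\phi(\cO_s(D)) \subset \cO_s(\widehat{D})$, so $\phi|_{\cO_s(D)}$ is an $\bR$-algebra homomorphism $\cO_s(D) \to \cO_s(\widehat{D})$. Applying Theorem \ref{Bers-thm} to this restriction yields a unique slice preserving regular map $h := \phi(id_D) \in \cO_s(\widehat{D})$, with image contained in $D$, such that $\phi(f) = f \circ h$ for every $f \in \cO_s(D)$. Since $h$ is slice preserving regular and its image lies in $D$, the composition lemma (Lemma \ref{lem-comp-of-slice-reg-func}) guarantees that $f \circ h \in \cO_s(\widehat{D})$ for each $f \in \cO_s(D)$, so the assignment $f \mapsto f \circ h$ is itself an $\bR$-algebra homomorphism on $\cO_s(D)$ agreeing with $\phi$ there.

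Next I would extend the identity $\phi(f) = f \circ h$ from $\cO_s(D)$ to all of $\cQ(\cO_s(D))$. For $f, g \in \cO_s(D)$ with $g \not\equiv 0$, set $(f/g) \circ h := (f \circ h)/(g \circ h)$; this is well defined as an element of $\cQ(\cO_s(\widehat{D}))$ because composing the relation $f g' = f' g$ with $h$ respects products (in the slice preserving setting $\star$ coincides with $\cdot$ by Theorem \ref{thm-M_s-is-a-field} and Lemma \ref{lem-commutative-M_s}), and because $g \circ h = \phi(g) \not\equiv 0$ as $\phi$ is an injective field homomorphism. Thus $F \mapsto F \circ h$ defines an $\bR$-algebra homomorphism $\cQ(\cO_s(D)) \to \cQ(\cO_s(\widehat{D}))$ that agrees with $\phi$ on $\cO_s(D)$. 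Since both $\phi$ and this map are homomorphisms out of the field $\cQ(\cO_s(D))$, which is generated as a field by $\cO_s(D)$, the universal property of the field of fractions forces them to coincide, giving $\phi(F) = F \circ h$ for all $F \in \cQ(\cO_s(D))$. Uniqueness of $h$ is immediate: any $h'$ satisfying $\phi(F) = F \circ h'$ must satisfy $h' = id_D \circ h' = \phi(id_D) = h$.

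The step I expect to require the most care is the extension in the last paragraph: one must check that forming the quotient function and composing with $h$ commute, i.e.\ that $(f/g)\circ h = (f\circ h)/(g\circ h)$ is independent of the chosen representative and that the denominator $g\circ h$ does not vanish identically. Both points rest on the injectivity of $\phi$ together with the coincidence of the $\star$-product and ordinary multiplication for slice preserving functions, which is exactly what makes $\cQ(\cO_s(D))$ a field of honest pointwise quotients rather than of $\star$-quotients; once this is in place, the universal property finishes the argument and the theorem follows.
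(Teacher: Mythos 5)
Your proposal is correct and follows essentially the same route as the paper: the paper's proof is precisely the observation that Lemma \ref{main-lem2-Iss'sa} reduces the statement to Theorem \ref{Bers-thm} applied to $\phi|_{\cO_s(D)}$, after which the identity $\phi(f) = f \circ h$ extends to the quotient field $\cQ(\cO_s(D))$. You have merely spelled out the well-definedness and uniqueness details that the paper leaves as ``immediate.''
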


In view of Theorem \ref{thm-M_s(H)-quotient-field}, we obtain the following.
\begin{theorem}
\label{Iss'sa-thm-for-H}

Let $\phi : \cM_s(\bH) \to \cM_s(\bH)$ be any $\bR$-algebra endomorphism of $\cM_s(\bH)$. Then there exists exactly one entire function $h : \bH \to \bH$ such that $\phi(f) = f \circ h$ for all $f \in \cM_s(\bH)$.

\end{theorem}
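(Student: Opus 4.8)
```latex
The plan is to deduce Theorem \ref{Iss'sa-thm-for-H} as a direct corollary of the two theorems immediately preceding it, so the bulk of the work is already done once the correct identifications are made. The key observation is Theorem \ref{thm-M_s(H)-quotient-field}, which tells us that $\cM_s(\bH) = \cQ(\cO_s(\bH))$; that is, the field of slice preserving semiregular functions on $\bH$ is literally the quotient field of the integral domain $\cO_s(\bH)$ of slice preserving entire functions. Therefore an $\bR$-algebra endomorphism $\phi : \cM_s(\bH) \to \cM_s(\bH)$ is precisely an $\bR$-algebra homomorphism $\phi : \cQ(\cO_s(\bH)) \to \cQ(\cO_s(\bH))$, which is exactly the kind of map addressed by Theorem \ref{Iss'sa-thm} in the special case $D = \widehat{D} = \bH$.

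First I would invoke Theorem \ref{Iss'sa-thm} with $D = \widehat{D} = \bH$: applying it verbatim yields a unique regular map $h : \bH \to \bH$ such that $\phi(f) = f \circ h$ for all $f \in \cQ(\cO_s(\bH))$. Rewriting this using the identification $\cQ(\cO_s(\bH)) = \cM_s(\bH)$, we obtain exactly one $h \in \cO_s(\bH)$ with $\phi(f) = f \circ h$ for all $f \in \cM_s(\bH)$. Since a regular map on the whole of $\bH$ is by definition an entire function, $h$ is the desired entire function. The uniqueness is inherited directly from the uniqueness clause of Theorem \ref{Iss'sa-thm}, which in turn comes from evaluating $\phi$ on the identity function: $h = \phi(id_{\bH})$.

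The only genuine content requiring attention is verifying that the composition $f \circ h$ makes sense for every semiregular (not merely regular) $f \in \cM_s(\bH)$, and that the map $h$ produced abstractly as $\phi(id_{\bH})$ is indeed entire rather than merely semiregular. For the latter: $id_{\bH} = q$ lies in $\cO_s(\bH) \subset \cQ(\cO_s(\bH))$, and by Lemma \ref{main-lem2-Iss'sa} (with $D = \widehat{D} = \bH$) we have $\phi(\cO_s(\bH)) \subset \cO_s(\bH)$, so $h = \phi(q)$ is automatically a slice preserving entire function. For the former, I would note that $h$ being slice preserving allows Lemma \ref{lem-comp-of-slice-reg-func} to guarantee regularity of compositions; for semiregular $f = \alpha/\beta$ with $\alpha, \beta \in \cO_s(\bH)$, the compositions $\alpha \circ h$ and $\beta \circ h$ are slice preserving entire functions, and the argument used in the proof of Corollary \ref{main-cor-Iss'sa} (showing $\beta \circ h \not\equiv 0$ via the open mapping theorem and Identity Principle when $\beta \not\equiv 0$) shows $f \circ h = (\alpha \circ h)/(\beta \circ h)$ is a well-defined element of $\cM_s(\bH)$.

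I anticipate no serious obstacle here, since the heavy lifting was performed in Theorems \ref{thm-M_s(H)-quotient-field} and \ref{Iss'sa-thm}; the main point requiring care is simply the bookkeeping that translates the quotient-field statement into the semiregular-function statement and confirms that $h$ lands in $\cO_s(\bH)$. The proof is therefore essentially a two-line citation: apply Theorem \ref{Iss'sa-thm} to $D = \widehat{D} = \bH$ and substitute the equality $\cM_s(\bH) = \cQ(\cO_s(\bH))$ from Theorem \ref{thm-M_s(H)-quotient-field}.
```
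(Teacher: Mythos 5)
Your proposal is correct and takes exactly the paper's route: the paper derives Theorem \ref{Iss'sa-thm-for-H} purely by combining Theorem \ref{Iss'sa-thm} (with $D = \widehat{D} = \bH$) with the identification $\cM_s(\bH) = \cQ(\cO_s(\bH))$ from Theorem \ref{thm-M_s(H)-quotient-field}, which is precisely your argument. Your additional checks (that $h = \phi(id_{\bH})$ lands in $\cO_s(\bH)$ via Lemma \ref{main-lem2-Iss'sa}, and that $f \circ h$ is well defined for semiregular $f$) are sound bookkeeping that the paper leaves implicit.
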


\end{document}